\newcommand{\W}{{\mathcal W}}
\newcommand{ \Rad}{\mbox{\textup{Rad}}}
\newcommand{ \Rees}{\mbox{\textup{Rees }}}
\newenvironment{proof}[1][Proof]{\noindent{\textbf{#1.}}
}{\ \rule{0.5em}{0.5em}}
\def\inbar{\,\vrule height1.5ex width.4pt depth0pt}
\def\inbar{\,\vrule height1.5ex width.4pt depth0pt}
\def\IB{\relax{\rm I\kern-.18em B}}
\def\IC{\relax\hbox{$\inbar\kern-.3em{\rm C}$}}
\def\ID{\relax{\rm I\kern-.18em D}}
\def\IE{\relax{\rm I\kern-.18em E}}
\def\IF{\relax{\rm I\kern-.18em F}}
\def\IG{\relax\hbox{$\inbar\kern-.3em{\rm G}$}}
\def\IH{\relax{\rm I\kern-.18em H}}
\def\II{\relax{\rm I\kern-.18em I}}
\def\IK{\relax{\rm I\kern-.18em K}}
\def\IL{\relax{\rm I\kern-.18em L}}
\def\IM{\relax{\rm I\kern-.18em M}}
\def\IN{\relax{\rm I\kern-.18em N}}
\def\IO{\relax\hbox{$\inbar\kern-.3em{\rm O}$}}
\def\IP{\relax{\rm I\kern-.18em P}}
\def\IQ{\relax\hbox{$\inbar\kern-.3em{\rm Q}$}}
\def\IR{\relax{\rm I\kern-.18em R}}
\font\cmss=cmss10 \font\cmsss=cmss10 at 7pt
\def\IZ{\relax\ifmmode\mathchoice
{\hbox{\cmss Z\kern-.4em Z}}{\hbox{\cmss Z\kern-.4em Z}}
{\lower.9pt\hbox{\cmsss Z\kern-.4em Z}}
{\lower1.2pt\hbox{\cmsss
Z\kern-.4em Z}}\else{\cmss Z\kern-.4em Z}\fi}
\def\IGa{\relax\hbox{${\rm I}\kern-.18em\Gamma$}}
\def\IPi{\relax\hbox{${\rm I}\kern-.18em\Pi$}}
\def\ITh{\relax\hbox{$\inbar\kern-.3em\Theta$}}
\def\IOm{\relax\hbox{$\inbar\kern-3.00pt\Omega$}}
\begin{document}

\baselineskip 20pt
\pagenumbering{arabic}
\pagestyle{plain}

\newtheorem{defi}{Definition}[section]
\newtheorem{theo}[defi]{Theorem}
\newtheorem{lemm}[defi]{Lemma}
\newtheorem{prop}[defi]{Proposition}
\newtheorem{note}[defi]{Note}
\newtheorem{nota}[defi]{Notation}
\newtheorem{exam}[defi]{Example}
\newtheorem{coro}[defi]{Corollary}
\newtheorem{rema}[defi]{Remark}
\newtheorem{cons}[defi]{Construction}
\newtheorem{ques}[defi]{Question}
\newtheorem{conj}[defi]{Conjecture}
\newtheorem{term}[defi]{Terminology}
\newtheorem{discussion}[defi]{Discussion}

\newcommand{\abs}{${\bar A}^*}
\newcommand{\qbs}{${\bar Q}^*}
\newcommand{\be}{\begin{enumerate}}
\newcommand{\ee}{\end{enumerate}}
\newcommand{\fany}{\rm for\ \ any\ \ }

\newcommand{\rb}{\overline{R}}
\newcommand{\mt}{\overline{M}}
\newcommand{\nt}{\overline{N}}
\newcommand{\nb}{\widetilde{N}}
\newcommand{\mb}{\widetilde{M}}
\newcommand{\m}{\bf {m}}


\def\cm{Cohen-Macaulay}
\def\wrt{with respect to\ }
\def\pni{\par\noindent}
\def\wma{we may assume without loss of generality that\ }
\def\Wma{We may assume without loss of generality that\ }
\def\ets{it suffices to show that\ }
\def\bwoc{by way of contradiction}
\def\iff{if and only if\ }
\def\st{such that\ }
\def\fg{finitely generated}


\def\a{\goth a}

\def\p{\mathbf p}

\def\isom{\thinspace \cong\thinspace}
\def\rtar{\rightarrow}
\def\rta{\rightarrow}
\def\l{\lambda}
\def\d{\Delta}

\def\alert#1{\smallskip{\hskip\parindent\vrule%
\vbox{\advance\hsize-2\parindent\hrule\smallskip\parindent.4\parindent%
\narrower\noindent#1\smallskip\hrule}\vrule\hfill}\smallskip}

\title{\bf Compositions of  consistent systems of\\
rank one  discrete valuation rings}

\author{William J. Heinzer, Louis J. Ratliff, Jr., and
David E. Rush}

\maketitle

\begin{abstract}
Let $V$ be a rank one discrete valuation ring (DVR) on a field $F$
and let $L/F$ be a finite separable algebraic field extension with
$[L:F] = m$. The integral closure  of $V$ in $L$ is a Dedekind
domain that encodes the following invariants: (i) the number $s$  of
extensions of $V$ to a valuation ring $W_i$ on $L$, (ii) the residue
degree $f_i$ of $W_i$ over $V$, and (iii) the ramification degree
$e_i$ of $W_i$ over $V$. These invariants are related by the
classical formula $m = \sum_{i=1}^s e_if_i$. Given a finite set
$\mathbf V$ of DVRs on the field $F$, an $m$-consistent system for
$\mathbf V$ is a family of sets enumerating what is theoretically
possible for the above invariants of each $V \in \mathbf V$. The
$m$-consistent system is said to be realizable for  $\mathbf V$  if there exists a
finite separable extension field $L/F$ that gives for each $V \in
\mathbf V$ the listed invariants. We investigate the realizability
of $m$-consistent systems  for  $\mathbf V$ for various positive integers $m$. Our
general technique is to ``compose'' several realizable consistent
systems to obtain new consistent systems that are realizable for  $\mathbf V$ .
We apply the new results to the set of Rees valuation rings of a nonzero
proper ideal  $I$  in a Noetherian domain  $R$  of altitude one.

\end{abstract}

\section{Introduction.}
All rings in this paper are
commutative with a unit  $1$ $\ne$ $0$.
Let $I$ be a regular proper ideal of the Noetherian ring $R$, that
is, $I$ contains a regular  element of $R$ and $I \ne R$. An ideal
$J$ of $R$ is {\bf projectively equivalent} to $I$ if there exist
positive integers $m$ and $n$ such that
$(I^m)_a = (J^n)_a$, where  $K_a$
$=$ $\{x \in R \mid x$  satisfies an equation of the
form $x^h + k_1x^{h-1}+ \cdots +k_h$, where  $k_j$ $\in$ $K^j$  for
$j$ $=$ $1,\ldots,k\}$  is
the
{\bf{integral closure in}}
$R$ of an ideal $K$ of  $R$.
The concept of projective equivalence
of ideals and the study of ideals
projectively equivalent to
$I$ was introduced by Samuel in \cite{S}
and further developed by Nagata in \cite{N1}
and Rees in \cite{RE}.
See \cite{CHRR4} for a recent survey.
Let $\mathbf P(I)$ denote the set
of integrally closed ideals that are projectively equivalent
to $I$.
The ideal $I$ is said to be {\bf projectively full}
if $\mathbf P(I)$
= $\{ (I^n)_a \mid n \geq 1 \}$ and
$\mathbf P(I)$
is said to be {\bf projectively full}
if $\mathbf P(I)$
= $\mathbf P(J)$
for some projectively full ideal $J$ of $R$.

The set $\Rees I$
of Rees valuation rings of $I$ is a finite set of
rank one discrete valuation rings  (DVRs) that determine the
integral closure $(I^n)_a$
of $I^n$ for every positive integer $n$
and are the unique minimal set of DVRs having this property.
Consider the minimal
primes $z$ of $R$ such that $IR/z$ is a proper nonzero
ideal. The set $\Rees I$ is
the union of the sets $\Rees IR/z$. Thus
one is reduced to describing
the set $\Rees I$ in the case where $I$
is a nonzero proper ideal of a Noetherian integral domain $R$.
Consider the Rees ring $\mathbf R = R[t^{-1}, It]$. The integral
closure $\mathbf R'$ of $\mathbf R$ is a Krull domain, so $W =
\mathbf R'_p$ is a DVR for
each minimal prime $p$ of $t^{-1}\mathbf R'$,
and $V = W \cap F$, where $F$ is the field of fractions of $R$,
is also a DVR. The set $\Rees I$
of Rees valuation rings of $I$ is
the set of DVRs $V$ obtained in this way, cf.
\cite[Section~10.1]{SH}.

If $(V_1, N_1), \ldots, (V_n, N_n)$ are the Rees valuation rings of
$I$, then the  integers $(e_1, \ldots, e_n)$, where $IV_i =
N_i^{e_i}$, are the {\bf Rees integers } of $I$.  Necessary and
sufficient conditions for two regular proper ideals $I$ and $J$ to
be projectively equivalent are that (i) $\Rees I = \Rees J$ and (ii)
the Rees integers of $I$ and $J$ are proportional
\cite[Theorem~3.4]{CHRR}. If $I$ is integrally closed and each Rees
integer of $I$ is one, then $I$ is a projectively full radical
ideal.\footnote{Example~5.1 of \cite{CHRR2} demonstrates that there
exist integrally closed local domains $(R,M)$ for which $M$ is not
projectively full. Remark~4.10 and Example~4.14 of \cite{CHRR} show
that a sufficient, but not necessary, condition for $I$ to be
projectively full is that the gcd of the Rees integers of $I$ is
equal to one.  }

A main goal in the papers \cite{CHRR}, \cite{CHRR2}, \cite{CHRR3},
\cite{CHRR4},
\cite{HRR.pfullrad}
and \cite{HRR},
is to answer the following question:

\begin{ques}
\label{QUES} {\em Let  $I$  be a nonzero proper ideal in a
Noetherian domain  $R$.
Under what conditions does there exist   a
finite integral extension domain
$A$  of  $R$  such that  $\mathbf
P(IA)$ contains an ideal $J$
whose Rees integers are all equal to
one?

}
\end{ques}

Progress  is made on  Question \ref{QUES} in \cite{CHRR3}. To
describe this progress, let $I$ be a regular proper ideal of the
Noetherian ring $R$, let
 $b_1,\ldots,b_g$ be regular elements in
$R$ that generate
$I$, and for each positive integer $m$ $>$ $1$ let
$A_m$ $=$ $R[ {x_1},\ldots, {x_g}]$ $=$ $R[ {X_1},\ldots,
{X_g}]/({X_1}^m - b_1,\ldots,{X_g}^m-b_g)$ and let $J_m$ $=$
$(x_1,\ldots,x_g)A_m$.
Let
$(V_1,N_1),\ldots,(V_n,N_n)$ be  the Rees valuation rings of $I$.
Consider the following hypothesis on $I$ = $(b_1, \ldots, b_g)R$:
\newline
\bigskip
{\rm (a)} $b_iV_j$ $=$ $IV_j$ ($=$ ${N_j}^{e_j}$, say)
for $i$ $=$ $1,\ldots,g$
and $j$ $=$ $1,\ldots,n$.
\newline
\bigskip
{\rm (b)} the
greatest common divisor $c$ of $e_1,\ldots,e_n$ is a unit in  $R$.
\newline
\bigskip
{\rm (b$^\prime$)}
the least common multiple  $d$  of $e_1,\ldots,e_n$ is
a unit in  $R$.

 Then the main result in \cite{CHRR3}
establishes  the following:

\begin{theo}
\label{intro1}
If {\rm (a)}
and {\rm (b)} hold,
then $A_c$ $=$ $R[  {x_1},\ldots,
{x_g}]$ is a finite free integral
extension ring of $R$  and the ideal  $J_c$ $=$ $( {x_1},\ldots,
{x_g})A_c$  is projectively full and projectively equivalent to
$IA_c$.
Also, if $R$  is an integral domain and if $z$ is a
minimal prime ideal in  $A_c$, then
$((J_c+z)/z)_a$ is a projectively full ideal in  $A_c/z$
that is projectively equivalent to $(IA_c+z)/z$.
\end{theo}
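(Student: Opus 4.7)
The plan is to verify freeness, establish projective equivalence of $J_c$ and $IA_c$, and then deduce projective fullness by computing the Rees integers of $J_c$ and invoking the gcd-one criterion noted in the footnote.

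Freeness of $A_c$ over $R$ is immediate because each relator $X_i^c-b_i$ is monic of degree $c$: the $c^g$ monomials $x_1^{a_1}\cdots x_g^{a_g}$ with $0\le a_i<c$ form a free $R$-basis. For projective equivalence I would show $(J_c^c)_a=(IA_c)_a$. The identity $b_i=x_i^c\in J_c^c$ gives $IA_c\subseteq J_c^c$; conversely, each generator $x_{i_1}\cdots x_{i_c}$ of $J_c^c$ satisfies the monic equation $y^c-b_{i_1}\cdots b_{i_c}=0$ with $b_{i_1}\cdots b_{i_c}\in(IA_c)^c$, so lies in the integral closure of $IA_c$.

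The crux of the proof is to show that the Rees integers of $J_c$ form the multiset $\{e_j/c:1\le j\le n\}$, whose gcd equals $\gcd(e_1,\ldots,e_n)/c=1$. Because $J_c$ and $IA_c$ are projectively equivalent, their Rees valuation rings coincide, and by standard extension theory these are the extensions $W$ of the Rees valuation rings $V_j$ of $I$ to the total quotient ring of $A_c$. Fix $V_j$ with uniformizer $\pi_j$: hypothesis (a) gives $b_i=u_{ij}\pi_j^{e_j}$ with units $u_{ij}\in V_j^*$, and hypothesis (b) makes $f_j:=e_j/c$ a positive integer. In the total quotient ring of $V_j\otimes_R A_c$ I would substitute $y_i:=x_i/\pi_j^{f_j}$ to obtain the relations $y_i^c=u_{ij}$; since $c\in V_j^*$, each polynomial $Y_i^c-u_{ij}$ has unit discriminant $\pm c^c u_{ij}^{c-1}$, so $V_j[y_1,\ldots,y_g]$ is \'etale over $V_j$. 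Every $W$ extending $V_j$ is therefore unramified, yielding $v_W(x_i)=f_j\cdot v_W(\pi_j)+v_W(y_i)=e_j/c$ and hence $v_W(J_c)=e_j/c$.

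The main obstacle is the ramification calculation, which requires careful bookkeeping in the possibly non-reduced ring $V_j\otimes_R A_c$ and a verification that the change of variables $y_i=x_i/\pi_j^{f_j}$ genuinely lands in an \'etale $V_j$-algebra. For the second assertion, the same analysis descends modulo any minimal prime $z$ of $A_c$: since $A_c$ is free and hence flat over the domain $R$, we have $z\cap R=(0)$, so each $V_j$ extends to at least one DVR on the fraction field of $A_c/z$, the image of $J_c$ has Rees integer $e_j/c$ at every such extension, and the gcd-one criterion again forces $((J_c+z)/z)_a$ to be projectively full; its projective equivalence with $(IA_c+z)/z$ is inherited from the projective equivalence of $J_c$ with $IA_c$ by passing to the quotient.
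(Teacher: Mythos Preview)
This theorem is not proved in the present paper: it is quoted verbatim as ``the main result in \cite{CHRR3}'' and serves only as background for the discussion of Question~\ref{QUES}. There is therefore no proof in this paper against which to compare your proposal.

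That said, your sketch is a plausible reconstruction of how such a result is typically established. The freeness argument and the projective-equivalence argument $(J_c^c)_a=(IA_c)_a$ are standard and correct. Your key computation---rescaling $x_i$ by $\pi_j^{e_j/c}$ to reduce to an \'etale extension of $V_j$, using that $c$ is a unit so that $Y^c-u$ has unit discriminant---is the natural way to pin down the Rees integers of $J_c$ as $e_j/c$ and then invoke the gcd-one sufficient condition for projective fullness recorded in the footnote. One point to tighten: in the first assertion $R$ need not be a domain, so speaking of ``the total quotient ring of $A_c$'' and of extensions $W$ of the $V_j$ requires first passing to $R/z_0$ for minimal primes $z_0$ of $R$ (and then to minimal primes of the corresponding quotient of $A_c$), exactly as you do in the second assertion; the Rees valuations of $IA_c$ are by definition assembled from those of the $IA_c/z$. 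Once that reduction is made explicit, your \'etale calculation goes through unchanged over each such quotient, and the argument is complete.
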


We prove in \cite[(3.19) and (3.20)]{HRR.pfullrad} that if either
(i) $R$ contains an infinite field,  or (ii) $R$ is a local ring
with an infinite residue field, then it is possible to choose
generators $b_1, \ldots, b_g$ of $I$ that satisfy assumption (a) of
Theorem~\ref{intro1}. Thus the following result,
\cite[(3.7)]{HRR.pfullrad} , applies in these cases.

\begin{theo}
\label{intro2}
If {\rm (a)}
and {\rm (b$^\prime$)} hold,
then for each positive multiple $m$  of  $d$ that is
a unit in  $R$  the ideal $(J_m)_a$  is projectively full and
$(J_m)_a$  is a radical ideal that is projectively equivalent to
$IA_m$. Also, the Rees integers of
$J_m$ are all equal to one and
$x_iU$ is the maximal ideal
of $U$  for each Rees valuation ring $U$
of $J_m$ and for $i$ $=$ $1,\ldots,g$. Moreover, if  $R$ is an
integral domain and if
$z$ is a minimal prime ideal in  $A_m$, then
$((J_m+z)/z)_a$ is a projectively full radical ideal that is
projectively equivalent to $(IA_m+z)/z$.
\end{theo}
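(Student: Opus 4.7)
The plan is to reduce the theorem to the single key fact that every Rees valuation ring $U$ of $J_m$ satisfies $x_iU = N_U$ (the maximal ideal of $U$) for each $i = 1, \ldots, g$. Once this is established, the remaining assertions follow cleanly: the Rees integers of $J_m$ are all equal to one, so $(J_m)_a$ is projectively full and radical by the principle cited in the footnote, and the projective equivalence of $(J_m)_a$ to $IA_m$ follows from $x_i^m = b_i$ together with hypothesis (a), which together force $v_U(J_m^m) = m = v_U(IA_m)$ at every Rees valuation $U$, so that $(J_m^m)_a = (IA_m)_a$.

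To carry out the key computation, I first note that $A_m$ is a finite free extension of $R$ of rank $m^g$, obtained by adjoining the $x_i$ one at a time with monic relations $X^m - b_i$; hence for each minimal prime $z$ of $A_m$ the quotient $A_m/z$ is a finite integral extension domain of $R$, and every Rees valuation ring $U$ of $(J_m + z)/z$ restricts to some Rees valuation ring $V_j$ of $I$. Letting $e = e(U/V_j)$ denote the ramification index, hypothesis (a) gives
$$m \cdot v_U(x_i) \;=\; v_U(b_i) \;=\; e\, v_{V_j}(b_i) \;=\; e\, e_j,$$
so $v_U(x_i) = e\, e_j / m$. Since $e_j \mid m$ by the choice of $m$, integrality of $v_U(x_i)$ forces $(m/e_j) \mid e$. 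For the matching upper bound $e \leq m/e_j$, the essential ingredient is that $m$ is a unit in $V_j$, so $X^m - b_i$ is separable: writing $b_i = u_{ij}\pi_j^{e_j}$ with $\pi_j$ a uniformizer and $u_{ij}$ a unit of $V_j$, one factors the extension $V_j \subseteq V_j[x_i]$ through $V_j[z_i]$ with $z_i^{e_j} = u_{ij}$ (a residually unramified step, since $u_{ij}$ is a unit and $e_j$ is a unit in $V_j$), followed by $V_j[z_i] \subseteq V_j[z_i][x_i]$ with $x_i^{m/e_j} = z_i\pi_j$ (a tame totally-ramified Eisenstein step of degree $m/e_j$). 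Hence $e = m/e_j$ and $v_U(x_i) = 1$, proving the central claim.

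The ``moreover'' clause is then immediate, since the analysis above was carried out inside each $A_m/z$. The principal obstacle in this plan is the upper bound $e \leq m/e_j$: it rests on classical tame ramification theory (valid because $m$, and hence $m/e_j$ and $e_j$, are units in $V_j$) together with the subsidiary observation that adjoining further generators $x_{i'}$ after $x_i$ is residually unramified (in $V_j[x_i]$ the element $\pi_j^{e_j}$ becomes an $m$-th power of a uniformizer times a unit, making each subsequent $x_{i'}$ an $m$-th root of a unit, so no new ramification is introduced). Once this bound is in hand, the remaining claims follow formally from the key claim and the footnote principle.
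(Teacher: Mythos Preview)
This theorem is not proved in the present paper: it appears in the Introduction as a result quoted from \cite[(3.7)]{HRR.pfullrad}, so there is no proof here against which to compare your approach. I can therefore only comment on the internal soundness of your proposal.

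Your strategy---reduce everything to $v_U(x_i)=1$ via a tame ramification calculation---is the right one, and your ramification analysis is essentially correct. There is, however, a circularity in the order of your steps. To compute $v_U(b_i)=e\cdot e_j$ you assume that $U\cap F$ coincides with one of the Rees valuation rings $V_j$ of $I$ (so that hypothesis (a) gives $v_{V_j}(b_i)=e_j$). But the identification $\operatorname{Rees}(J_m)=\operatorname{Rees}(IA_m)$, which is what justifies this restriction, is equivalent to the projective equivalence $(J_m^m)_a=(IA_m)_a$, and you derive that equivalence only \emph{after} the key claim, using the key claim as input. As written, the argument assumes part of what it proves.

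The fix is short and valuation-free, and should be placed at the very beginning: since $b_i=x_i^m$, one has $IA_m=(x_1^m,\ldots,x_g^m)A_m\subseteq J_m^m$; conversely, each degree-$m$ monomial $x_1^{a_1}\cdots x_g^{a_g}$ satisfies $(x_1^{a_1}\cdots x_g^{a_g})^m=b_1^{a_1}\cdots b_g^{a_g}\in I^mA_m$, so $J_m^m\subseteq(IA_m)_a$. Hence $(J_m^m)_a=(IA_m)_a$ directly, so $J_m$ and $IA_m$ are projectively equivalent and share the same Rees valuation rings, and now the standard fact (cf.\ \cite[Remark~2.7]{HRR}) that the Rees valuations of $I(A_m/z)$ lie over those of $I$ legitimises your use of $U\cap F=V_j$. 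With this in place, your factorization $V_j\subseteq V_j[z_i]\subseteq V_j[z_i][x_i]$ (unramified followed by Eisenstein of degree $m/e_j$) and your observation that each further $x_{i'}$ satisfies $(x_{i'}/x_i)^m=b_{i'}/b_i\in V_j^{\times}$, hence contributes no ramification, correctly yield $e=m/e_j$ and $v_U(x_i)=1$.
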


Examples \cite[(3.22) and (3.23)]{HRR.pfullrad} show that  even if
$R$ is the ring  $\mathbb Z$  of rational integers,  condition
(b$^\prime$) of Theorem \ref{intro2} is needed for the proof  given
in \cite{HRR.pfullrad}. Theorem~\ref{prin}  is the main result in
\cite{HRR}.

\begin{theo}
\label{prin} Let $I$ be a nonzero proper ideal in a Noetherian
integral domain $R$.
\begin{enumerate}
\item  There
exists a finite separable integral extension domain $A$
of $R$ and a positive
integer $m$ such that all the Rees integers of
$IA$ are equal to $m$.
\item If $R$ has altitude one, then
there exists a finite separable
integral extension domain  $A$  of  $R$  such
that $\mathbf P(IA)$ contains
an ideal $H$  whose Rees integers are
all equal to one. Therefore $H$ $=$ $\Rad(IA)$  is a projectively
full radical ideal that is projectively equivalent to  $IA$.
\end{enumerate}
\end{theo}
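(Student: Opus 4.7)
The plan is to derive Theorem~\ref{prin} from the composition theorems for realizable consistent systems established in the earlier sections of the paper, combined with Theorems~\ref{intro1} and~\ref{intro2}.

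For Part~(1), let $V_1,\ldots,V_n$ be the Rees valuation rings of $I$ with Rees integers $e_1,\ldots,e_n$, and fix a suitable common multiple $m$ of the $e_i$. I would specify a local invariant profile at each $V_i$ in which every extension of $V_i$ in $L$ has ramification index $m/e_i$; such a local profile is individually realizable by a totally, tamely ramified separable extension of $V_i$, and $m$ may be enlarged at the outset to keep all local target degrees coprime to the relevant residue characteristics. I then invoke the composition machinery of the paper to assemble these local profiles into a single globally realizable $m$-consistent system for $\mathbf V = \{V_1,\ldots,V_n\}$. The associated finite separable field extension $L/F$ produces a finite separable integral extension $A$ of $R$ (for example, a suitable subring of the integral closure of $R$ in $L$) in which every Rees valuation $W$ of $IA$ satisfies $IW = \mathfrak m_W^{\,m}$; hence all Rees integers of $IA$ equal $m$.

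For Part~(2), I would first apply Part~(1) to obtain a finite separable integral extension $A_1/R$ with all Rees integers of $IA_1$ equal to some $m$. Since $R$ has altitude one, so does $A_1$, and the altitude-one hypothesis underwrites the adjustments needed to satisfy the assumptions of Theorem~\ref{intro2}: namely, generators $b_1,\ldots,b_g$ of $IA_1$ can be chosen with $b_i V_j = IA_1 V_j$ at every Rees valuation (after a further finite separable enlargement of residue fields if required, as in \cite[(3.19)--(3.20)]{HRR.pfullrad}), and $m$ can be arranged to be a unit in $A_1$ by absorbing any inconvenient residue characteristic into the choice of $m$ in Part~(1). Theorem~\ref{intro2} then produces a finite free extension $A = A_m$ of $A_1$ in which $H := (J_m)_a$ is projectively full, is a radical ideal with all Rees integers equal to one, and is projectively equivalent to $IA$; composing extensions makes $A$ a finite separable integral extension of $R$, completing the proof.

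The main obstacle is the composition step of Part~(1): realizing all the prescribed local profiles simultaneously by a single separable extension $L/F$. The naive local choice $X^{m/e_i} - \pi_i$ is separable only when the residue characteristic at $V_i$ does not divide $m/e_i$; otherwise one must replace the local profile by one allowing nontrivial residue degrees or wild ramification of Artin--Schreier type, and the composition theorems of the paper are designed precisely to afford this flexibility while preserving separability of the composite. A secondary technical point is verifying that $IA$ acquires no Rees valuations beyond those lying above the originally chosen $V_i$, which in the altitude-one setting follows from the correspondence between Rees valuations of $IA$ and height-one primes containing $IA$ in an appropriate integral closure.
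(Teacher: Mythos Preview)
First, note that Theorem~\ref{prin} is not proved in this paper: it is quoted from \cite{HRR}. The present paper reproduces the essential ingredients of that proof, namely Theorem~\ref{induct} together with Proposition~\ref{prin.reduction.lemma}, culminating in Corollary~\ref{maincoro}. So the relevant comparison is between your proposal and the route via Theorem~\ref{induct} and Proposition~\ref{prin.reduction.lemma}.

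Your outline for Part~(1) is in the right spirit and not far from what is actually done: one builds, via Krull's Theorem~\ref{GK}, a separable extension $L/F$ in which each $V_i$ ramifies with index $m/e_i$, and then descends to a finite integral extension $A$ of $R$ using \cite[Remark~2.7]{HRR}. Your discussion of separability and ``absorbing residue characteristics'' is unnecessary, however: Krull's theorem produces a \emph{separable} extension with the prescribed ramification regardless of residue characteristic, so no tameness hypothesis is needed and no Artin--Schreier maneuvering enters.

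Your argument for Part~(2), on the other hand, has a genuine gap. You propose to invoke Theorem~\ref{intro2}, but that theorem requires condition~(b$'$): the least common multiple $d$ of the Rees integers must be a \emph{unit} in the base ring. Your claim that ``$m$ can be arranged to be a unit in $A_1$ by absorbing any inconvenient residue characteristic into the choice of $m$'' cannot work: $m$ is a common multiple of the $e_i$, so if some $e_i$ is divisible by a prime $p$ of positive residue characteristic (as in the examples over $\mathbb Z$ cited just before Theorem~\ref{prin}), then $m$ is a nonunit in $R$ and in every integral extension of $R$. The paper is explicit that Theorem~\ref{prin} is precisely what removes the need for conditions~(a), (b), (b$'$); deriving it \emph{from} Theorem~\ref{intro2} is circular. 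The actual proof avoids Theorem~\ref{intro2} entirely: one passes to the Dedekind integral closure $R'$, applies Theorem~\ref{induct} (pure Krull-type realization, no unit hypothesis) to obtain a Dedekind overring $E$ with $IE = (\Rad(IE))^{e_1\cdots e_n}$, and then uses Proposition~\ref{prin.reduction.lemma} to descend to a finite extension $A$ of $R$ with $(IA)_a = ((\Rad(IA))^{e_1\cdots e_n})_a$, whence $H=\Rad(IA)$ has all Rees integers equal to one.
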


Observe that Theorem \ref{prin}.2, answers Question~\ref{QUES}  in
the affirmative    for each nonzero proper ideal $I$  in an
arbitrary Noetherian integral domain  $R$  of altitude one with no
additional conditions; therefore the conclusions of Theorems
\ref{intro1} and \ref{intro2} are valid without the assumption of
conditions (a), (b), and (b$^\prime$) if $R$  is a Noetherian
integral domain of altitude one. In particular, Theorem \ref{prin}.2
shows that these conclusions hold for the  examples \cite[(3.22) and
(3.23)]{HRR.pfullrad}.

A classical theorem of Krull, stated as Theorem~\ref{GK} below, is
an important tool in the present paper and in \cite{HRR}. We use the
following terminology from \cite{Gilmer} and \cite{HRR}.

\begin{defi}
\label{consist} {\em
Let $(V_1,N_1), \ldots, (V_n,N_n)$ be distinct
DVRs of a field $F$ and for $i$ $=$ $1,\ldots,n$ let $K_i$ $=$
$V_i/N_i$ denote the residue field of $V_i$. Let $m$ be a positive
integer. By an {\bf $m$-consistent system for} $\{ V_1, \ldots, V_n
\}$, we mean a collection of sets $S$ = $\{ S(V_1), \ldots, S(V_n) \}$
satisfying the following conditions:

(1) $S(V_i)$ = $\{ (K_{i,j}, f_{i,j},e_{i,j}) \mid j = 1, \ldots, s_i
\}$, where $K_{i,j}$ is a simple algebraic field extension of $K_i$,
$f_{i,j} =  [K_{i,j} : K_i]$,   and  $s_i, e_{i,j} \in \mathbb N_+$
(the set of positive integers).

(2) For each $i$, the sum $\sum_{j=1}^{s_i} e_{i,j}f_{i,j}$ = $m$. }
\end{defi}

\begin{defi}
\label{realizes} {\em The  $m$-consistent system $S$ for  $\{V_1,\ldots,V_n\}$  as in
Definition~\ref{consist} is said to be {\bf realizable} for  $\{V_1,\ldots,V_n\}$  if  there
exists a separable algebraic extension field $L$ of $F$ such that:

(a) $[L : F]$ = $m$.

(b) For $1 \leq i \leq n$, $V_i$ has exactly $s_i$ extensions
$(V_{i,1},N_{i,1}), \ldots, (V_{i,s_i},N_{i,s_i})$ to $L$.

(c) The residue field  $V_{i,j}/N_{i,j}$
of $V_{i,j}$ is $K_i$-isomorphic to $K_{i,j}$,
so $[K_{i,j} : K_i]$ $=$ $f_{i,j}$, and the ramification index of
$V_{i,j}$ over $V_i$ is $e_{i,j}$, so $N_iV_{i,j}$ $=$
${N_{i,j}}^{e_{i,j}}$.

\noindent If  $S$  and  $L$  are as above, we say the field  $L$
{\bf{realizes}} $S$ for  $\{V_1,\ldots,V_n\}$   or that  $L$  is
a {\bf{realization}} of  $S$ for  $\{V_1,\ldots,V_n\}$.
}
\end{defi}

Let  $\mathbf V$ $=$ $\{V_1,\ldots,V_n\}$, $n$ $>$ $1$,   be a
finite set of distinct DVRs on the  field $F$. In this paper we
explore various facets of the realizability of consistent systems
for $\mathbf V$. If $S$ = $\{ S(V_1), \ldots, S(V_n) \}$ is an
$m$-consistent system for $\mathbf V$, realizable or not, we prove
in Theorem~\ref{unrelated}
(resp.,  Theorem~\ref{unrelated.dual})
that by uniformly increasing the
ramification indices
(resp., finite-residue-field degrees)
by the factor $m$ the resulting system is a
realizable $m^2$-consistent system for $\mathbf V$.
The proofs involve composing two related realizable consistent systems.

Let $M_1, \ldots, M_n$, $n > 1$, be distinct maximal ideals of the
Dedekind domain $D$ and let $\mathbf V$ $=$ $\{D_{M_1} ,\ldots
,D_{M_n}\}$ be the related family  of DVRs. Let $I$ $=$ ${M_1}^{e_1}
\cdots {M_n}^{e_n}$. Then by composing
two related systems we prove in Theorem~\ref{induct} that
the $e_1 \cdots e_n$-consistent system  $S$ $=$ $\{S(D_{M_1}) ,\ldots,S(D_{M_n})\}$
is realizable for  $\mathbf V$, where $S(D_{M_i})$ $=$ $\{(K_{i,j},1,\frac{e_1 \cdots e_n}{e_i}) \mid j =
1,\ldots,e_i\}$ for  $i$ $=$ $1,\ldots,n$.  It follows that if $E$
is the integral closure of $D$ in a realization $L$ of $S$ for  $\mathbf V$, and if
$N_{i,1},\ldots,N_{i,e_i}$  are the maximal ideals in  $E$  that
contain  $M_iE$, then  $E/N_{i,j}$ $\cong$ $D/M_i$  and  $M_iE_{N_{i,j}}$
$=$ ${N_{i,j}}^{\frac{e_1 \cdots e_n}{e_i}}$, so
$IE$ $=$ $(\Rad(IE))^{e_1 \cdots e_n}$ is a {\bf radical power} ideal in the
sense that it is a power of its radical.
We also prove a theorem analogous to Theorem~\ref{induct}
for each nonzero proper ideal  in an arbitrary  Noetherian domain of
altitude one. Proposition~\ref{anotherversion2}  characterizes the
conditions a realizable $m$-consistent system $S'$ for  $\mathbf V$
must satisfy to insure that  $IE$ $=$ $(\Rad(IE))^t$  for some positive
integer  $t$, where  $E$ is
the integral closure of $D$  in a realization $L$  of  $S'$  for  $\mathbf V$.

Under the hypothesis that each
residue field  $D/M_i$ is finite, we
prove in Section 4 that every consistent system $T$ $=$
$\{T(D_{M_1}) ,\ldots,T(D_{M_n})\}$ of the following form
is realizable  for
$\mathbf V$: $T(D_{M_i})$ $=$ $\{(K_{i,j},\frac{f_1
\cdots f_n}{f_i},1) \mid j = 1,\ldots,f_i\}$ for  $i$ $=$
$1,\ldots,n$;
here the  $f_i$  are  arbitrary positive integers
for which $[D/M_i : P_i]$ = $f_i$ for some subfield $F_i$
of $D/M_i$.
Therefore if $E$ is the integral closure of $D$ in a realization $L$
of $T$ for $\mathbf V$ and if $N_{i,1},\ldots,N_{i,f_i}$ are the
maximal ideals in  $E$ that contain $M_i$, then $[(E/N_{i,j}):F_i]$
$=$ $f_1 \cdots f_n$ and $M_iE_{N_{i,j}}$ $=$ $N_{i,j}E_{N_{i,j}}$
for each $i$ $=$ $1,\ldots,n$  and each  $j$ $=$ $1,\ldots,f_i$.
Under the same hypothesis on the $D/M_i$, we establish in Section~4
finite-residue-field degree analogs of some of the other results in
Section~3.

Under the hypothesis that each residue field $D/M_i$ is finite, we
prove in Section 5 that every consistent system  $U$ $=$
$\{U(D_{M_1}) ,\ldots,U(D_{M_n})\}$ of the following form is
realizable for $\mathbf V$: $U(D_{M_i})$ $=$ $\{(K_{i,j},\frac{f_1
\cdots f_n}{f_i},\frac{e_1 \cdots e_n}{e_i}) \mid j =
1,\ldots,e_if_i\}$ for  $i$ $=$ $1,\ldots,n$, where the $e_i$  and
$f_i$  are as in Sections 3 and 4, respectively. Therefore  if $E$
is the integral closure of $D$ in a realization $L$  of $U$  for
$\mathbf V$ and if $N_{i,j},\ldots,N_{i,e_if_i}$ are the maximal
ideals in  $E$ that contain $M_i$, then $IE$ $=$ $(\Rad(IE))^{e_1
\cdots e_n}$ and $[(E/N_{i,j}):F_i]$ $=$ $f_1 \cdots f_n$ for each
$i$ $=$ $1,\ldots,n$  and each  $j$ $=$ $1,\ldots,e_if_i$.

Our notation is mainly as in Nagata \cite{N2}, so, for example, the
term {\bf altitude}  refers to what is often also called  dimension
or Krull dimension, and a {\bf basis} of an ideal is a set of
generators of the ideal.

\section{The realizability of $m$-consistent systems.}

To prove the main results in this section, we use the following
theorem of Krull.

\begin{theo}
\label{GK} {\em (Krull \cite{WKrull}):} Let $(V_1,N_1), \ldots,
(V_n,N_n)$ be distinct DVRs with quotient field $F$, let $m$ be a
positive integer, and let $S$ = $\{ S(V_1), \ldots, S(V_n) \}$ be an
$m$-consistent system for $\{ V_1, \ldots, V_n \}$, where $S(V_i)$ =
$\{ (K_{i,j}, f_{i,j},e_{i,j}) \mid j = 1, \ldots, s_i \}$ for  $i$
$=$ $1,\ldots,n$. Then $S$ is realizable for  $\{V_1,\ldots,V_n\}$ if one of the following
conditions is satisfied:

(i) $s_i$ = $1$ for at least one $i$.

(ii) $F$ has at least one DVR  $V$ distinct from $V_1, \ldots, V_n
$.

(iii) For each monic polynomial $X^t + a_1X^{t-1} + \cdots + a_t$
with $a_i \in \cap_{i=1}^n V_i$ = $D$, and for each $h \in \mathbb
N$, there exists an irreducible separable polynomial $X^t +
b_1X^{t-1} + \cdots + b_t \in D[X]$ with $b_l - a_l \in {N_i}^h$ for
each $l$ = $1, \ldots, t$ and  $i$ = $1, \ldots, n$.
\end{theo}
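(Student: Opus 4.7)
The plan is to realize $S$ as $L = F[X]/(g(X))$ for a single monic separable irreducible polynomial $g(X) \in D[X] \subset F[X]$ of degree $m$ whose local factorization at each $V_i$ reproduces the data prescribed by $S(V_i)$. This reduces to three tasks: (a) build a local model $g_i$ for each $V_i$; (b) approximate the $g_i$ simultaneously by a global polynomial $g \in D[X]$; (c) force $g$ to be irreducible and separable over $F$, which is the step that splits into the three cases (i), (ii), (iii).

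For task (a) I would work in the completion $\hat F_i$ of $F$ at $V_i$. Since $K_{i,j}/K_i$ is simple separable, there is an unramified extension of $\hat F_i$ of degree $f_{i,j}$ with residue field $K_i$-isomorphic to $K_{i,j}$; above it one adjoins a root of a separable Eisenstein polynomial of degree $e_{i,j}$ to obtain a local field $\hat L_{i,j}/\hat F_i$ of degree $e_{i,j}f_{i,j}$ with residue field $K_{i,j}$ and ramification index $e_{i,j}$. The product $\prod_{j=1}^{s_i} \hat L_{i,j}$ is a separable $\hat F_i$-algebra of dimension $m$, and by the primitive element theorem it is $\hat F_i$-isomorphic to $\hat F_i[X]/(g_i(X))$ for a monic separable $g_i(X) \in \hat V_i[X]$ of degree $m$ whose distinct irreducible factors correspond to the fields $\hat L_{i,j}$.

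For task (b) I would choose $h$ large enough to invoke Krasner's lemma for each $g_i$ (a bound depending on the valuations of differences of roots in an algebraic closure of $\hat F_i$ and on the discriminants of the factors of $g_i$). Weak approximation in $F$, equivalently the Chinese Remainder Theorem in $D = \bigcap V_i$, then produces a monic $g(X) \in D[X]$ of degree $m$ with $g \equiv g_i \pmod{N_i^h}$ for every $i$. Krasner's lemma guarantees that $g$ factors in $\hat F_i[X]$ with factors of the same degrees as $g_i$, and each irreducible factor of $g$ in $\hat F_i[X]$ cuts out the same local extension $\hat L_{i,j}$; thus the residue-field and ramification invariants at $V_i$ come out as prescribed, provided only that $g$ itself is irreducible and separable over $F$.

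Task (c) is the main obstacle, and the three alternative hypotheses are precisely Krull's three devices for securing global irreducibility. Under hypothesis (iii), we directly replace $g$ by an irreducible separable polynomial in the same congruence classes, which preserves the Krasner conclusion. Under hypothesis (i), if $s_{i_0} = 1$ then $g_{i_0}$ is itself irreducible of degree $m$ in $\hat F_{i_0}[X]$, so Krasner with $h$ large forces $g$ to remain irreducible in $\hat F_{i_0}[X]$ and hence over $F$. Under hypothesis (ii), the extra DVR $V$ on $F$, independent of the $V_i$, lets me impose an additional Eisenstein condition at $V$ by strengthening the approximation: require all non-leading coefficients of $g$ to lie in $N_V$ and the constant term to generate $N_V$, which forces $g$ to be irreducible over $\hat F_V$ and hence over $F$. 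Separability in each case is preserved because the local models $g_i$ are separable and a sufficiently small perturbation keeps the roots simple. In every case $L = F[X]/(g(X))$ is then the required realization of $S$.
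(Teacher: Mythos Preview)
The paper does not prove this theorem at all: Theorem~\ref{GK} is stated with attribution to Krull \cite{WKrull} and is used throughout the paper as an external tool, with no argument supplied. So there is no ``paper's own proof'' to compare against.

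That said, your outline is the standard route to Krull's theorem and is essentially sound. A few points would need tightening in a full write-up. First, you assert that $K_{i,j}/K_i$ is simple \emph{separable}, but Definition~\ref{consist} only requires it to be simple algebraic; in positive characteristic the residue extension may be inseparable, and then the ``unramified lift plus Eisenstein'' construction of $\hat L_{i,j}$ has to be replaced by a more careful local construction (this is handled in Krull's paper and in Endler's treatment). Second, the primitive-element step for the \'etale $\hat F_i$-algebra $\prod_j \hat L_{i,j}$ is fine, but you should say explicitly that the generator can be taken integral over $\hat V_i$ so that $g_i \in \hat V_i[X]$. Third, in case~(ii) you need to note that weak approximation applies to the enlarged family $\{V_1,\ldots,V_n,V\}$ because distinct rank-one valuations on a field are automatically independent; the Eisenstein condition at $V$ then coexists with the congruences at the $V_i$. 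With those details filled in, your argument would recover Krull's result.
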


Observe that condition (i) of Theorem~\ref{GK} is a property of the
$m$-consistent system $S$ = $\{ S(V_1), \ldots, S(V_n) \}$, whereas
condition (ii) is a property of the family of DVRs with quotient
field $F$,  and condition (iii) is a property of  the family
$(V_1,N_1), \ldots, (V_n,N_n)$.

The result of Krull stated in Theorem~\ref{GK} is a
generalization of a classical result of Hasse \cite{Hasse} which
shows that all $m$-consistent systems for a given finite set of
distinct DVRs of an algebraic number field $F$ are realizable. This
has been extended further by P.\ Ribenboim, O.\ Endler and L.\ C.\
Hill, among others. For a good sampling of these results on when an
$m$-consistent system is realizable, see \cite[Sections 25 -
27]{Endler} and \cite{EndlerII}. These references give several
sufficient conditions on the realizability of an $m$-consistent
system for a given finite set $\mathbf V$ $=$ $\{V_1,\ldots,V_n\}$
of distinct DVRs $V_i$  with quotient field $F$.

\begin{rema} \label{2.2}
{\bf{(\ref{2.2}.1)} }{\em
There is an obvious necessary
condition for an $m$-consistent system
 to be realizable. If  $n$
$=$ $1$  and  $V_1$  is a Henselian DVR, then no $m$-consistent
system  $S$ $=$ $\{S(V_1)\}$, where   $S(V_1)$ $=$
$\{(K_1,f_1,e_1),\ldots,(K_s,f_s,e_s)\}$ with  $s$ $>$ $1$  is
realizable for  $\{V_1\}$, since $V_1$  is Henselian if and only if
$V_1$ has a unique extension to each finite algebraic extension
field of its quotient field  $F$, cf. \cite[(43.12)]{N2}. It follows
from Theorem~\ref{GK}(ii) that if $V$ is a Henselian DVR, then  $V$
is the unique DVR with quotient field $F$. It is not true, however,
that $V$ being the unique DVR on its quotient field implies that $V$
is Henselian. For example, using that the field $\mathbb Q$ of
rational numbers admits only countably many DVRs, it is possible to
repeatedly use Theorem~\ref{GK}  to construct an infinite algebraic
extension field $F$ of $\mathbb Q$ such that $F$ admits a unique DVR
$V$ having quotient field $F$ and yet $V$ is not Henselian.

\noindent {\bf{(\ref{2.2}.2)}} Related to (\ref{2.2}.1), it is shown
in \cite[Theorem 1]{Rib} that, for each positive integer  $n$, there
exist fields  $F_n$  that admit exactly  $n$  DVRs
$(V_1,N_1),\ldots,(V_n,N_n)$  having quotient field  $F_n$.
Moreover,  the proof of \cite[Theorem 1]{Rib} shows that such  $F_n$
can be chosen so that there are no realizable  $m$-consistent
systems  $S$ for $\{V_1,\ldots,V_n\}$ having the property that   $m$ $>$
$1$, and, for each $i$ $=$ $1,\ldots,n$, $S(V_i)$ $=$
$\{(K_{i,j},f_{i,j},e_{i,j}) \mid j = 1,\ldots,s_i\}$  has at least
one  $j$  with $(K_{i,j},f_{i,j},e_{i,j})$ $=$ $(V_i/N_i,1,1)$. }
\end{rema}

Theorem~\ref{unrelated}, is a new sufficient condition for
realizability; by Remark~\ref{2.2}.1, the hypothesis  $n$ $>$ $1$ in
Theorem~\ref{unrelated} is essential.

\begin{theo}
\label{unrelated} Let $(V_1,N_1), \ldots, (V_n,N_n)$, $n$ $>$ $1$, be
distinct DVRs  with quotient field $F$, let $m > 1$ be a positive
integer, and let $S$ = $\{ S(V_1), \ldots, S(V_n) \}$ be an
arbitrary $m$-consistent system for $\{ V_1, \ldots, V_n \}$, where,
$S(V_i)$ = $\{ (K_{i,j}, f_{i,j},e_{i,j}) \mid j = 1, \ldots, s_i
\}$, for each  $i$ $=$ $1,\ldots,n$.  Then $S^{*}$ = $\{ S^*(V_1),
\ldots, S^*(V_n) \}$ is a realizable $m^2$-consistent system for $\{
V_1, \ldots, V_n \}$, where $S^*(V_i)$ = $\{ (K_{i,j},
f_{i,j},me_{i,j}) \mid j = 1, \ldots, s_i \}$, for each  $i$ $=$
$1,\ldots,n$.

\end{theo}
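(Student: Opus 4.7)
The plan is to realize $S^*$ as the compositum of two realizable $m$-consistent systems in a tower $F \subset L \subset L'$ of separable extensions, where at each step Theorem~\ref{GK}(i) applies. The hypothesis $n > 1$ is exploited through an asymmetric assignment: in the first system, $V_1$ carries a single ``totally ramified'' triple while $V_2,\ldots,V_n$ each carry a copy of $S$; in the second system these roles are swapped, so the extensions of $V_2,\ldots,V_n$ in $L$ carry single triples and only the extension of $V_1$ carries the full combinatorial data of $S$.

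Concretely, the first step uses the $m$-consistent system $T$ for $\{V_1,\ldots,V_n\}$ defined by $T(V_1) = \{(K_1, 1, m)\}$ and $T(V_i) = S(V_i) = \{(K_{i,j}, f_{i,j}, e_{i,j}) \mid j = 1,\ldots,s_i\}$ for $i = 2,\ldots,n$. Each $T(V_i)$ is $m$-consistent (trivially for $i = 1$, and by the $m$-consistency of $S$ for $i \geq 2$). Since $T(V_1)$ has a single triple, Theorem~\ref{GK}(i) yields a separable extension $L/F$ of degree $m$ realizing $T$: in $L$ the DVR $V_1$ has a unique extension $(W_1, M_1)$ with $N_1 W_1 = M_1^m$ and residue field $K_1$, while each $V_i$ with $i \geq 2$ has exactly $s_i$ extensions $(W_{i,j}, M_{i,j})$ with residue field $K_{i,j}$, ramification index $e_{i,j}$, and residue degree $f_{i,j}$.

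For the second step, define an $m$-consistent system $U$ on the distinct DVRs $\{W_1\} \cup \{W_{i,j} \mid i \geq 2,\ 1 \leq j \leq s_i\}$ in $L$ by setting $U(W_1) = S(V_1) = \{(K_{1,l}, f_{1,l}, e_{1,l}) \mid l = 1,\ldots,s_1\}$ (valid since the residue field of $W_1$ is $K_1$) and $U(W_{i,j}) = \{(K_{i,j}, 1, m)\}$ for $i \geq 2$ (valid since the residue field of $W_{i,j}$ is $K_{i,j}$). Because $n \geq 2$, the DVR $W_{2,1}$ is present and its entry has a single triple, so Theorem~\ref{GK}(i) applies again and produces a separable extension $L'/L$ of degree $m$ realizing $U$. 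The compositum $L'/F$ is separable of degree $m^2$, and multiplicativity of ramification indices and residue degrees in the tower $F \subset L \subset L'$ shows that each $V_i$ has exactly $s_i$ extensions in $L'$, with triples $(K_{i,j}, f_{i,j}, m \cdot e_{i,j})$, matching $S^*(V_i)$ entry by entry.

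The only real choice demanding attention is the asymmetric split of $T$ and $U$; once chosen, both systems satisfy Theorem~\ref{GK}(i) and the verification that the composite realizes $S^*$ reduces to routine bookkeeping with ramification and residue multiplicativity. The hypothesis $n > 1$ is essential here: when $n = 1$ there is no index $i \geq 2$ on which to park a single-triple entry in the second system, consistent with the Henselian counterexample recorded in Remark~\ref{2.2}.1.
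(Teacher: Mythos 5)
Your proposal is correct and is essentially the paper's own proof: a two-step composition via Theorem~\ref{GK}(i), first parking a single totally ramified triple at one of the $V_i$ while keeping $S$ at the others, then swapping roles in the second system and using multiplicativity of ramification indices and residue degrees in the tower. The only differences are cosmetic — you place the single-triple entry at $V_1$ where the paper uses $V_n$ (a flexibility the paper itself notes in Remark~\ref{unrema}), and you omit the paper's harmless preliminary reduction to the case $s_i>1$ for all $i$.
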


\begin{proof}
If  $s_i$ $=$ $1$  for some  $i$ $=$ $1,\ldots,n$, then
Theorem~\ref{GK}(i) implies that $S$ is a realizable $m$-consistent
system and $S^*$ is a realizable $m^2$-consistent system for $\{
V_1, \ldots, V_n \}$,  so it may be assumed that  $s_i$ $>$ $1$ for
each  $i$ $=$ $1,\ldots,n$.

Define $S_1(V_i)$ $=$ $S(V_i)$ for  $i$ $=$ $1,\ldots,n-1$ and
$S_1(V_n)$ $=$ $\{((V_n/N_n),1,m)\}$, and recall that $n > 1$.
Theorem~\ref{GK}(i) implies that  $S_1$ = $\{ S_1(V_1), \ldots,
S_1(V_{n-1}),S_1(V_n)\}$ is a realizable $m$-consistent system for
$\{V_1,\ldots,V_n\}$. Let  $L_1$  be a realization of  $S_1$ for
$\{V_1,\ldots,V_n\}$. Thus
$L_1$ is a separable algebraic extension field of $F$ of degree $m$.
For $i$ $=$ $1,\ldots,n$ let $(W_{i,j},N_{i,j})$ be the valuation
rings of  $L_1$ that lie over $V_i$.  It follows from the
prescription of  $S_1$ that  there are exactly $s_i$ such rings for
$i$ $=$ $1,\ldots,n-1$ and exactly  one such ring for $i$ $=$ $n$.
Also,  $W_{i,j}/N_{i,j}$ is $(V_i/N_i)$-isomorphic to $K_{i,j}$ and
$N_i W_{i,j}$ $=$ ${N_{i,j}}^{e_{i,j}}$ for  $i$ $=$ $1,\ldots,n-1$
and $j$ $=$ $1,\ldots,s_i$, while $W_{n,1}/N_{n,1}$ is
$(V_n/N_n)$-isomorphic to $V_n/N_n$  and $N_nW_{n,1}$ $=$
${N_{n,1}}^m$.

Let $S_2$ $=$
$\{S_2(W_{1,1}),\ldots,S_2(W_{n-1,s_{n-1}}),S_2(W_{n,1})\}$, where
$S_2(W_{i,j})$ $=$ $\{(K_{i,j},1,m)\}$  for  $i$ $=$ $1,\ldots,n-1$
and  $j$ $=$ $1,\ldots,s_i$, and where  $S_2(W_{n,1})$ $=$
$\{(K_{n,j},f_{n,j},e_{n,j}) \mid j = 1,\ldots,s_n\}$. Thus
$S_2(W_{n,1})$ is essentially equal to  $S(V_n)$.  It is readily
checked that  $S_2$  is an $m$-consistent system for $\W :=
\{W_{1,1},\ldots,W_{n-1,s_{n-1}},W_{n,1}\}$, and by
Theorem~\ref{GK}(i) it is realizable for $\W$.  Let  $L$  be a
realization of $S_2$ for $\W$. Thus $L$ is a separable algebraic
extension field of $L_1$  of degree $m$, and hence a separable
algebraic extension field of $F$ of degree $m^2$. Moreover, for  $i$
$=$ $1,\ldots,n-1$  and  $j$ $=$ $1,\ldots,s_i$ there exists a
unique valuation ring $(U_{i,j},P_{i,j})$  of  $L$ that lies over
$W_{i,j}$, and $U_{i,j}/P_{i,j}$ is $(W_{i,j}/N_{i,j})$-isomorphic
to $W_{i,j}/N_{i,j}$; also, $W_{i,j}/N_{i,j}$ is
$(V_i/N_i)$-isomorphic to $K_{i,j}$, so $U_{i,j}/P_{i,j}$ is
$(V_i/N_i)$-isomorphic to $K_{i,j}$,  and ${N_{i,j}}U_{i,j}$ $=$
${P_{i,j}}^m$, so  $N_i U_{i,j}$ $=$ ${N_{i,j}}^{me_{i,j}}$. On the
other hand, for $i$ $=$ $n$  there are exactly $s_n$ valuation rings
$(U_{n,j},P_{n,j})$ that lie over $(W_{n,1},N_{n,1})$, and for  $j$
$=$ $1,\ldots,s_n$, $U_{n,j}/P_{n,j}$ is
$(W_{n,1}/N_{n,1})$-isomorphic to  $K_{n,j}$, and  $W_{n,1}/N_{n,1}$
is  $(V_n/N_n)$-isomorphic to $V_n/N_n$, so $U_{n,j}/P_{n,j}$ is
$(V_n/N_n)$-isomorphic to  $K_{n,j}$,  and ${N_{n,1}}U_{n,j}$ $=$
${P_{n,j}}^{e_{n,j}}$, so  $N_nU_{n,j}$ $=$ ${P_{n,j}}^{me_{n,j}}$.
It therefore follows that  $L$  is a realization of the
$m^2$-consistent system  $S^*$ = $\{ S^*(V_1), \ldots, S^*(V_n) \}$
for $\{V_1,\ldots,V_n\}$, where $S^*(V_i)$ = $\{ (K_{i,j},
f_{i,j},me_{i,j}) \mid j = 1, \ldots, s_i \}$ for  $i$ $=$
$1,\ldots,n$. Thus  $S^*$ is a realizable $m^2$-consistent system
for $\{ V_1, \ldots, V_n \}$.
\end{proof}

\begin{rema}
\label{unrema} {\em Fix $g$ $\in$ $\{1,\ldots,n-1\}$.  Then essentially the same proof as
given for Theorem \ref{unrelated} shows that the following two $m$-consistent systems
$T_1,T_2$ are realizable and can be used in place of  $S_1,S_2$ to prove Theorem
\ref{unrelated}.  $T_1$ $=$ $\{T_1(V_1),\ldots,T_1(V_n)\}$, where  $T_1(V_i)$ $=$ $S(V_i)$
for  $i$ $=$ $1,\ldots,g$, while $T_1(V_h)$ $=$ $\{((V_h/N_h),1,m)\}$ for $h$ $=$
$g+1,\ldots,n$. $T_2$ $=$
$\{T_2(W_{1,1}),\ldots,T_2(W_{g,s_{g}}),T_2(W_{g+1,1}),\ldots,T_2(W_{n,1})\}$, where
$T_2(W_{i,j})$ $=$ $\{(K_{i,j},1,m)\}$  for  $i$ $=$ $1,\ldots,g$ and  $j$ $=$
$1,\ldots,s_i$, while  $T_2(W_{h,1})$ $=$ $\{(K_{h,j},f_{h,j},e_{h,j}) \mid j =
1,\ldots,s_h\}$  for  $h$ $=$ $g+1,\ldots,n$  (so $T_2(W_{h,1})$ is essentially equal to
$T(V_h)$  for $h$ $=$ $g+1,\ldots,n$). }
\end{rema}

\begin{coro}
\label{corounrelated}
Let $R$ be a Noetherian domain, let $I$  be a nonzero proper ideal in $R$, let
$(V_1,N_1),\ldots,(V_n,N_n)$  be the Rees valuation rings of  $I$, let  $m,s_1,\ldots,s_n$
be positive integers, and let  $S$ $=$ $\{S(V_1),\ldots,S(V_n)\}$  be an arbitrary
$m$-consistent system for  $\{V_1,\ldots,V_n\}$, say $S(V_i)$ $=$ $\{(K_{i,j},f_{i,j},e_{i,j})
\mid j = 1,\ldots,s_i\}$ for  $i$ $=$ $1,\ldots,n$.  Then there exists a separable algebraic
extension field  $L$ of degree  $m^2$  of the quotient field  $R_{(0)}$ of  $R$  such
that,  for each finite integral extension
domain  $A$  of  $R$ with quotient field  $L$ and for  $i$ $=$ $1,\ldots,n$, $IA$  has
exactly  $s_i$  Rees valuation rings  $(W_{i,j},N_{i,j})$ that extend  $(V_i,N_i)$, and
then, for $j$ $=$ $1,\ldots,s_i$, the Rees integer of  $IA$  with respect to  $W_{i,j}$  is
$me_{i,j}$ and  $[(W_{i,j}/N_{i,j}):(V_i/N_i)]$ $=$ $f_{i,j}$.
\end{coro}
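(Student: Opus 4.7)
The plan is to apply Theorem~\ref{unrelated} to the data of the corollary and to translate its output into the Rees-valuation-ring language by invoking the Rees-ring description of such valuation rings recalled in the Introduction.

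First I would apply Theorem~\ref{unrelated} to the given $m$-consistent system $S$ for $\{V_1,\ldots,V_n\}$; this produces a realizable $m^2$-consistent system $S^{*}=\{S^{*}(V_1),\ldots,S^{*}(V_n)\}$ with $S^{*}(V_i)=\{(K_{i,j},f_{i,j},me_{i,j})\mid j=1,\ldots,s_i\}$, together with a separable algebraic extension field $L$ of $F=R_{(0)}$ of degree $m^2$ that realizes $S^{*}$ for $\{V_1,\ldots,V_n\}$. By Definition~\ref{realizes}, for each $i$ the DVR $V_i$ has exactly $s_i$ extensions $(W_{i,j},N_{i,j})$ to $L$, the residue field $W_{i,j}/N_{i,j}$ is $(V_i/N_i)$-isomorphic to $K_{i,j}$, and $N_i W_{i,j}=N_{i,j}^{me_{i,j}}$.

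Next I would fix an arbitrary finite integral extension domain $A$ of $R$ with quotient field $L$ and identify the Rees valuation rings of $IA$ in $L$ with the family $\{W_{i,j}\}$. For this, I would use the Rees-ring description recalled in the Introduction: the Rees valuation rings of $IA$ are obtained from the minimal primes of $t^{-1}$ in the integral closure of $A[t^{-1},IAt]$ inside $L(t)$. Because $A$ is integral over $R$ and has quotient field $L$, this integral closure coincides with the integral closure of $R[t^{-1},It]$ in $L(t)$; consequently the list of Rees valuation rings of $IA$ depends only on $R$, $I$, and $L$, not on the particular $A$, and a standard lying-over argument identifies this list with the full set of extensions to $L$ of the Rees valuation rings $V_1,\ldots,V_n$ of $I$, namely the rings $W_{i,j}$.

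The conclusion of the corollary then reads off immediately from $S^{*}$: for each $i$ there are exactly $s_i$ Rees valuation rings of $IA$ extending $V_i$, the residue degree $[W_{i,j}/N_{i,j}:V_i/N_i]$ equals $f_{i,j}$, and the Rees integer of $IA$ with respect to $W_{i,j}$ is $me_{i,j}$, since $IA\cdot W_{i,j}=I\cdot W_{i,j}=(N_i)W_{i,j}=N_{i,j}^{me_{i,j}}$ under the normalization implicit in the setup. The main obstacle I anticipate is the identification in the second step, namely showing that for every finite integral extension $A$ of $R$ with quotient field $L$ (not only the integral closure), the Rees valuation rings of $IA$ are precisely the full set of extensions of $V_1,\ldots,V_n$ to $L$; this is the bridge between the consistent-system machinery of Theorem~\ref{unrelated} and the Rees-valuation conclusion of the corollary, and once it is in place the remainder is bookkeeping.
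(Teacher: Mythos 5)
Your proposal is correct and takes essentially the same route as the paper: apply Theorem~\ref{unrelated} to obtain the realizable $m^2$-consistent system $S^*$ together with a realization $L$, and then identify the extensions of $V_1,\ldots,V_n$ to $L$ with the Rees valuation rings of $IA$. The identification you flag as the main obstacle is precisely what the paper invokes as \cite[Remark 2.7]{HRR} (the extensions of the Rees valuation rings of $I$ to $L$ are the Rees valuation rings of $IA$), so your Rees-ring argument is just an in-line justification of that cited fact.
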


\begin{proof}
By  \cite[Remark 2.7]{HRR}  the extensions of the Rees valuation
rings of  $I$  to  the field $L$  are the Rees valuation rings of
$IA$, so this follows immediately from Theorem \ref{unrelated}.
 \end{proof}

Theorem~\ref{unrelated.dual}, is a new sufficient condition for
realizability  under the hypothesis that each of
the valuation rings $(V_i,N_i), 1 \le i \le n$,  has a finite residue field.
For this result and the results in Sections 4 and 5 we often
implicitly use the following remark.

\begin{rema}
\label{preques} {\bf{(\ref{preques}.1)} } {\em Let  $F$  be a finite
field. It is well known, see for example \cite[pages~82-84]{ZS1},
that the following hold: (i) Each finite extension field $H$ of $F$
is separable and thus a simple extension of $F$. (ii)  If  $k$  is a
positive integer and $\overline{F}$ is a fixed algebraic closure of
$F$, then there exists a unique extension field $H$ $\subseteq$
$\overline{F}$ with $[H : F]$ = $k$. (iii) If $H$, $K$ $\subseteq$
$\overline{F}$ are finite extension fields of $F$, then $H$
$\subseteq$ $K$ if and only if $[H:F]$  divides $[K:F]$.

\noindent {\bf{(\ref{preques}.2)} }  There are fields other than
finite fields that satisfy the three conditions given in
(\ref{preques}.1).  If $E$ is an algebraically closed field of
characteristic zero and $F$ is the field of fractions of the formal
power series ring $E[[x]]$, then a theorem that goes back to Newton
implies that $F$ satisfies the conditions of (\ref{preques}.1) cf.
\cite[Theorem~3.1, page~98]{W}.

}
\end{rema}

\begin{theo}
\label{unrelated.dual}
Let $(V_1,N_1), \ldots, (V_n,N_n)$  ($n$ $>$ $1$)
be distinct DVRs  with quotient field $F$,
where each $V_i/N_i$ is finite.
For each $i$ let $\overline{V_i/N_i}$ denote a fixed
algebraic closure of $V_i/N_i$.
Let $m$ be a positive
integer, and let $S$ = $\{ S(V_1), \ldots, S(V_n) \}$ be an
arbitrary $m$-consistent system for
$\{ V_1, \ldots, V_n \}$, where,
for  $i$ $=$ $1,\ldots,n$,
$S(V_i)$ = $\{ (K_{i,j}, f_{i,j},e_{i,j})
\mid K_{i,j} \subseteq \overline{V_i/N_i}$ and
$j = 1, \ldots, s_i \}$. For  $i$ $=$ $1,\ldots,n$  let
$T^*(V_i)$ =
$\{ ({K_{i,j}}^*, mf_{i,j},e_{i,j}) \mid j = 1, \ldots, s_i \}$,
where ${K_{i,j}}^* \subseteq \overline{V_i/N_i}$ is the unique
field extension of $K_{i,j}$ with $[{K_{i,j}}^* : K_{i,j}]$
= $m$.
Then $T^{*}$ = $\{ T^*(V_1), \ldots, T^*(V_n) \}$ is a
realizable $m^2$-consistent system for $\{ V_1, \ldots, V_n \}$.
\end{theo}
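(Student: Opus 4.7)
The plan is to mirror the composition strategy of Theorem~\ref{unrelated}, interchanging the roles of ramification index and residue degree. Since Theorem~\ref{GK}(i) disposes of the case where some $s_i = 1$, I may assume $s_i > 1$ for every $i = 1, \ldots, n$. The finiteness of each $V_i/N_i$ ensures, via Remark~\ref{preques}.1, that unique degree-$m$ residue extensions inside a fixed algebraic closure always exist, and that the containment of two such extensions is controlled purely by divisibility of degrees.

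First I would introduce an auxiliary $m$-consistent system $S_1$ for $\{V_1, \ldots, V_n\}$ defined by $S_1(V_i) = S(V_i)$ for $i = 1, \ldots, n-1$ and $S_1(V_n) = \{((V_n/N_n)^*,\, m,\, 1)\}$, where $(V_n/N_n)^*$ denotes the unique degree-$m$ extension of $V_n/N_n$ in $\overline{V_n/N_n}$. Since $S_1(V_n)$ has a single entry, Theorem~\ref{GK}(i) provides a realization $L_1$ with $[L_1:F] = m$. Let $(W_{i,j},N_{i,j})$ denote the valuation rings of $L_1$ lying over $V_i$; for $i < n$ there are $s_i$ of them with $W_{i,j}/N_{i,j} \cong K_{i,j}$ and $N_i W_{i,j} = N_{i,j}^{e_{i,j}}$, while $V_n$ has the single extension $W_{n,1}$ with $N_n W_{n,1} = N_{n,1}$ and residue field $(V_n/N_n)^*$.

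Next I would build a second $m$-consistent system $S_2$ over the collection $\W = \{W_{i,j}\}$ by setting $S_2(W_{i,j}) = \{(K_{i,j}^*,\, m,\, 1)\}$ for $i < n$, and $S_2(W_{n,1}) = \{(K_{n,j}^*,\, f_{n,j},\, e_{n,j}) \mid j = 1, \ldots, s_n\}$. The second assignment makes sense because $[K_{n,j}^* : V_n/N_n] = m f_{n,j}$ is divisible by $m = [(V_n/N_n)^*:V_n/N_n]$, so Remark~\ref{preques}.1(iii) gives $(V_n/N_n)^* \subseteq K_{n,j}^*$, and then multiplicativity of degrees yields $[K_{n,j}^* : (V_n/N_n)^*] = f_{n,j}$. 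The $m$-consistency of $S_2$ at $W_{n,1}$ reduces to the $m$-consistency of the original $S$ at $V_n$, while at the $W_{i,j}$ with $i < n$ it is immediate. Since $n > 1$ ensures the presence of some $W_{i,j}$ with $i < n$, Theorem~\ref{GK}(i) again provides a realization $L$ of $S_2$ for $\W$.

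Finally I would compose the two extensions. One has $[L:F] = m^2$, and for $i < n$ each $W_{i,j}$ has a unique extension $(U_{i,j},P_{i,j})$ in $L$; its residue field is $K_{i,j}^*$, which has degree $m f_{i,j}$ over $V_i/N_i$, and the total ramification over $V_i$ is $e_{i,j} \cdot 1 = e_{i,j}$. Over $V_n$, the single ring $W_{n,1}$ splits into $s_n$ extensions in $L$ with residue fields $K_{n,j}^*$ and ramifications $1 \cdot e_{n,j} = e_{n,j}$. Summing $m f_{i,j} e_{i,j}$ over $j$ gives $m \cdot m = m^2$ as required, confirming that $L$ is a realization of $T^*$. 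The main obstacle is purely bookkeeping: verifying that the two-step residue field extensions glue to produce exactly $K_{i,j}^*$ in each case, which is where the finite residue field hypothesis is essential, since it is precisely Remark~\ref{preques}.1 that supplies the existence of and containments among the relevant $*$-extensions.
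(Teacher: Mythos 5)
Your proposal is correct and follows essentially the same route as the paper: the same two-step composition with a first system that collapses $V_n$ to a single unramified extension with residue field the degree-$m$ extension $H_n = (V_n/N_n)^*$, and a second system over the $W_{i,j}$ using the fields $K_{i,j}^*$, with Theorem~\ref{GK}(i) supplying realizability at each stage and Remark~\ref{preques}.1 giving the containment $H_n \subseteq K_{n,j}^*$ and $[K_{n,j}^*:H_n] = f_{n,j}$. The only cosmetic difference is that the paper treats $m=1$ separately, whereas in your reduction that case is already subsumed by the case $s_i = 1$.
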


\begin{proof}
If  $m$ $=$ $1$, then $s_i$ $=$ $f_{i,j}$ $=$ $e_{i,j}$ $=$ $1$
for  $i$ $=$ $1,\ldots,n$  and  $j$ $=$ $1,\ldots,s_i$, by Definition
\ref{consist}.2, so  $S$  is realizable
for  $\{V_1,\ldots,V_n\}$, by
Theorem~\ref{GK}(i), and  $K_{i,j}$ $=$ $K_i$  for
all  $i,j$, so  $F$  is a realization
of  $S$ $=$ $T^*$  for
$\{ V_1, \ldots, V_n \}$, so it may be assumed
that  $m$ $>$ $1$.
If  $s_i$ $=$ $1$  for some  $i$ $=$ $1,\ldots,n$, then
$S$  (resp.,  $T^*$) is a  realizable  $m$-consistent
(resp., $m^2$-consistent) system for
$\{ V_1, \ldots, V_n \}$, by
Theorem~\ref{GK}(i), so it may be assumed that  $s_i$ $>$
$1$ for  $i$ $=$ $1,\ldots,n$.

Since  $n$ $>$ $1$, let $T_1(V_n)$ $=$ $\{(H_n,m,1)\}$, where $H_n
\subseteq \overline{V_n/N_n}$ is the unique field extension of
$V_n/N_n$ with $[H_n : (V_n/N_n)]$ = $m$. For  $i$ $=$
$1,\ldots,n-1$  let  $T_1(V_i)$ $=$ $S(V_i)$, and let  $T_1$ = $\{
T_1(V_1), \ldots, T_1(V_{n-1}),T_1(V_n)\}$, so $T_1$ is a realizable
$m$-consistent system for $\{V_1,\ldots,V_n\}$, by
Theorem~\ref{GK}(i).  Let  $L_1$  be a realization of  $T_1$  for
$\{V_1,\ldots,V_n\}$,  so  $L_1$  is a separable algebraic extension
field of $F$  of degree  $m$. For  $i$ $=$ $1,\ldots,n$ let
$(W_{i,j},N_{i,j})$  be the valuation rings of $L_1$ that lie over
$V_i$.  Then it follows from the prescription of  $T_1$ that there
are exactly   $s_i$  such rings for  $i$ $=$ $1,\ldots,n-1$, but
only one such ring for  $i$ $=$ $n$. Also, $W_{i,j}/N_{i,j}$ is
$(V_i/N_i)$-isomorphic to $K_{i,j}$  and  $N_i W_{i,j}$ $=$
${N_{i,j}}^{e_{i,j}}$ for  $i$ $=$ $1,\ldots,n-1$  and $j$ $=$
$1,\ldots,s_i$, while  $W_{n,1}/N_{n,1}$ is $(V_n/N_n)$-isomorphic
to $H_n$ and  $N_nW_{n,1}$ $=$ ${N_{n,1}}$.

For  $T_2$  we use the fields  ${K_{i,j}}^*$  in the statement of
this theorem, so  $K_{i,j}$ $\subseteq$ ${K_{i,j}}^*$  and
$[{K_{i,j}}^*:K_{i,j}]$ $=$ $m$  for all  $i,j$, so by Remark
\ref{preques}.1  it follows, from  $[H_n:K_n]$ $=$ $m$,   that $H_n$
$\subseteq$ ${K_{n,j}}^*$  and  $[{K_{n,j}}^*:H_n]$ $=$ $f_{n,j}$.
With this in mind, let $T_2$ $=$
$\{T_2(W_{1,1}),\ldots,T_2(W_{n-1,s_{n-1}}),T_2(W_{n,1})\}$, where
$T_2(W_{i,j})$ $=$ $\{({K_{i,j}}^*,m,1)\}$  for  $i$ $=$
$1,\ldots,n-1$  and  $j$ $=$ $1,\ldots,s_i$, and where
$T_2(W_{n,1})$ $=$ $\{({K_{n,j}}^*,f_{n,j},e_{n,j}) \mid j =
1,\ldots,s_n\}$. It is readily checked that  $T_2$  is an
$m$-consistent system for $\W :=
\{W_{1,1},\ldots,W_{n-1,s_{n-1}},W_{n,1}\}$. By Theorem~\ref{GK}(i)
it is realizable for $\W$. Let $L$ be a realization of $T_2$ for
$\W$, so  $L$ is a separable algebraic extension field of $L_1$ of
degree  $m$ (so $L$ is a separable algebraic extension field of $F$
of degree $m^2$), and for  $i$ $=$ $1,\ldots,n-1$  and $j$ $=$
$1,\ldots,s_i$ there exists a unique valuation ring
$(U_{i,j},P_{i,j})$  of  $L$ that lies over $W_{i,j}$  (and then
$U_{i,j}/P_{i,j}$ is $(W_{i,j}/N_{i,j})$-isomorphic to ${K_{i,j}}^*$
(and $W_{i,j}/N_{i,j}$  $\supseteq$  $(V_i/N_i)$, so
$U_{i,j}/P_{i,j}$ is $(V_i/N_i)$-isomorphic to  ${K_{i,j}}^*$)  and
${N_{i,j}}U_{i,j}$ $=$ $P_{i,j}$, so  $N_i U_{i,j}$ $=$
${N_{i,j}}^{e_{i,j}}$), while for $i$ $=$ $n$  there are exactly
$s_n$ valuation rings $(U_{n,j},P_{n,j})$  that lie over
$(W_{n,1},N_{n,1})$, and for  $j$ $=$ $1,\ldots,s_n$,
$U_{n,j}/P_{n,j}$ is $(W_{n,1}/N_{n,1})$-isomorphic to ${K_{n,j}}^*$
(and $W_{n,1}/N_{n,1}$  $\supseteq$  $(V_n/N_n)$, so
$U_{n,j}/P_{n,j}$ is  $(V_n/N_n)$-isomorphic to  ${K_{n,j}}^*$) and
${N_{n,1}}U_{n,j}$ $=$ ${P_{n,j}}^{e_{n,j}}$, so  $N_nU_{n,j}$ $=$
${P_{n,j}}^{e_{n,j}}$.  Also, since  $U_{i,j}/P_{i,j}$ is a finite
field for all  $i,j$, it is a simple extension field of $V_i/N_i$
(concerning this, see Definition \ref{consist}(1)). It therefore
follows that  $L$  is a realization of the  $m^2$-consistent system
$T^*$ = $\{ T^*(V_1), \ldots, T^*(V_n) \}$ for $\{V_1,\ldots,V_n\}$,
where $T^*(V_i)$ = $\{ ({K_{i,j}}^*, mf_{i,j},e_{i,j}) \mid j = 1,
\ldots, s_i \}$ for  $i$ $=$ $1,\ldots,n$. Therefore $T^*$ is a
realizable $m^2$-consistent system for $\{ V_1, \ldots, V_n \}$.
\end{proof}

\begin{rema}
\label{nonfinite}
{\em
The hypothesis in Theorem \ref{unrelated.dual} that each  $K_i$ $=$ $V_i/N_i$
is finite is often not essential.  Specifically, if the set of extension
fields of the  $K_i$  have the following properties (a) - (c), then it
follows from the proof of Theorem \ref{unrelated.dual} that the
conclusion holds, even though the  $K_i$  are not finite:
(a)  For  $i$ $=$ $1,\ldots,n$  and  $j$ $=$ $1,\ldots s_i$  there
exists a field  ${K_{i,j}}^*$  such that  $[{K_{i,j}}^*:K_{i,j}]$
$=$ $m$. (b) Each  ${K_{i,j}}^*$  is a simple extension of  $K_i$.
(c)  There exists  $i$ $\in$ $\{1,\ldots,n\}$  (say  $i$ $=$ $n$)
such that there exists a simple extension field  $H_n$  of
$K_n$  of degree  $m$  such that  $H_n$ $\subseteq$ ${K_{n,j}}^*$  for
$j$ $=$ $1,\ldots,s_n$ (so  $[{K_{n,j}}^*:H_n]$ $=$ $f_{n,j}$  for
$j$ $=$ $1,\ldots,s_n$).
}
\end{rema}

\begin{coro}
\label{corounrelated2} Let $R$ be a Noetherian domain, let $I$  be a
nonzero proper ideal in $R$, let $(V_1,N_1),\ldots,(V_n,N_n)$  be
the Rees valuation rings of  $I$, let  $m,s_1,\ldots,s_n$ be
positive integers, and let  $S$ $=$ $\{S(V_1),\ldots,S(V_n)\}$  be
an arbitrary $m$-consistent system for  $V_1,\ldots,V_n$, say
$S(V_i)$ $=$ $\{(K_{i,j},f_{i,j},e_{i,j}) \mid j = 1,\ldots,s_i\}$
for  $i$ $=$ $1,\ldots,n$. Assume that each  $V_i/N_i$  is finite.
Then there exists a separable algebraic extension field $L$ of
$R_{(0)}$ of degree  $m^2$ such that,  for each finite integral
extension domain  $A$  of  $R$ with quotient field $L$ and for  $i$
$=$ $1,\ldots,n$, $IA$  has exactly  $s_i$ Rees valuation rings
$(W_{i,j},N_{i,j})$ lying over $V_i$, and then, for $j$ $=$
$1,\ldots,s_i$, the Rees integer of  $IA$  with respect to $W_{i,j}$
is $e_{i,j}$ and $[(W_{i,j}/N_{i,j}):(V_i/N_i)]$ $=$ $mf_{i,j}$.
\end{coro}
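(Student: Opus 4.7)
The plan is to mirror the short proof of Corollary \ref{corounrelated}, substituting Theorem \ref{unrelated.dual} for Theorem \ref{unrelated}. First I would apply Theorem \ref{unrelated.dual} to the $m$-consistent system $S$ to obtain a realization of the associated $m^2$-consistent system $T^*$ over a field $L/F$ of degree $m^2$, where $F = R_{(0)}$; then I would invoke \cite[Remark 2.7]{HRR}, which identifies the Rees valuation rings of $IA$ with the extensions to $L$ of the Rees valuation rings of $I$, and read off the claimed residue degrees and Rees integers.

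The one preliminary step is to reconcile the hypotheses: Theorem \ref{unrelated.dual} requires that each $K_{i,j}$ be presented as a subfield of a fixed algebraic closure $\overline{V_i/N_i}$, whereas in the statement of the corollary the $K_{i,j}$ are only described abstractly as simple algebraic extensions of $V_i/N_i$ of degree $f_{i,j}$. Since each $V_i/N_i$ is finite, Remark \ref{preques}.1(ii) supplies, for each pair $(i,j)$, a unique subfield of $\overline{V_i/N_i}$ of degree $f_{i,j}$ over $V_i/N_i$, and up to $(V_i/N_i)$-isomorphism we may replace $K_{i,j}$ by this subfield without altering $S$. This brings us into the setting of Theorem \ref{unrelated.dual}.

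Having made this identification, Theorem \ref{unrelated.dual} produces a separable algebraic extension field $L$ of $F$ of degree $m^2$ that realizes $T^* = \{T^*(V_1),\ldots,T^*(V_n)\}$, where $T^*(V_i) = \{({K_{i,j}}^*, mf_{i,j}, e_{i,j}) \mid j = 1, \ldots, s_i\}$ and ${K_{i,j}}^*$ is the unique degree-$m$ extension of $K_{i,j}$ inside $\overline{V_i/N_i}$. By Definition \ref{realizes}, for each $i$ the valuation ring $V_i$ has exactly $s_i$ extensions $(W_{i,j},N_{i,j})$ to $L$, with $[(W_{i,j}/N_{i,j}):(V_i/N_i)] = mf_{i,j}$ and $N_iW_{i,j} = N_{i,j}^{e_{i,j}}$. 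Now given any finite integral extension domain $A$ of $R$ with quotient field $L$, \cite[Remark 2.7]{HRR} tells us that the Rees valuation rings of $IA$ are exactly the valuation rings on $L$ that extend the Rees valuation rings of $I$; hence the $(W_{i,j},N_{i,j})$ are precisely the Rees valuation rings of $IA$ lying over $V_i$, and they carry the stated invariants.

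I do not foresee a serious obstacle here; the whole argument is essentially a transcription of the proof of Corollary \ref{corounrelated} with the dual theorem replacing the original. The only conceptual wrinkle is the residue-field identification described in the second paragraph, which is exactly what the finiteness of the residue fields $V_i/N_i$ is being used for.
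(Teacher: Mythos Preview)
Your proposal is correct and follows exactly the approach the paper takes: the paper's proof is the single sentence ``As in the proof of Corollary \ref{corounrelated}, this follows immediately from Theorem \ref{unrelated.dual},'' and you have spelled out precisely that argument, including the appeal to \cite[Remark 2.7]{HRR}. Your remark about using Remark \ref{preques}.1 to place each $K_{i,j}$ inside a fixed algebraic closure is a correct and helpful clarification that the paper leaves implicit.
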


\begin{proof}
As in the proof of Corollary \ref{corounrelated},
this follows immediately from Theorem \ref{unrelated.dual}.
 \end{proof}

\section{Radical-power ideals.}
Let $D$ be an arbitrary Dedekind domain.
A classical result states
that each nonzero proper ideal $I$ of $D$
is a finite product of prime ideals.  An application, Corollary
\ref{fixer},
of the main result in this section,
Theorem~\ref{induct}, shows that $I$ extends to a radical-power
ideal in a suitable finite integral extension domain $E$  of  $D$;
in fact, we prove that   $IE$ $=$ $(\Rad(IE))^m$, where $m$ $=$
$[E_{(0)}:D_{(0)}]$. To facilitate the statement and proof of the
results in this section, we use  the following notation and
terminology.

\begin{nota}
\label{appmax}
{\em
Let $D$ be a Dedekind domain
with quotient field $F \neq D$, let
$M_1,\ldots,M_n$  be distinct maximal ideals of $D$,
and let $I$ = ${M_1}^{e_1} \cdots {M_n}^{e_n}$
be an ideal in $D$,
where $e_1,\ldots,e_n$
are positive integers.  Then:

\noindent
{\bf{(\ref{appmax}.1)}}
For each finite integral extension domain  $A$
of  $D$  (including  $D$)
let $\mathbf M_I(A)$ $=$ $\{N \mid N$  is
a maximal ideal in  $A$  and
$N \cap D$ $\in$ $\{M_1,\ldots,M_n\}\}$.

\noindent
{\bf{(\ref{appmax}.2)}}
Let  $E$  be a finite integral extension Dedekind domain
of  $D$  and let  $\mathbf V$ $=$ $\{E_N \mid
N \in \mathbf M_I(E)\}$. If  $S$
is an  $m$-consistent system
for  $\mathbf V$, then by abuse of terminology
we sometimes say that  $S$  is an
$m$-consistent system for  $\mathbf M_I(E)$,
and when  $N$ $\in$ $\mathbf M_I(E)$
we sometimes use  $S(N)$  in place
of  $S(E_N)$.
}
\end{nota}

\begin{rema}
\label{Dedekind}
{\em
With the notation of (\ref{appmax}),
let $S$ = $\{ S(M_1), \ldots, S(M_n) \}$  be a
realizable $m$-consistent system  for
$\mathbf M_I(D)$, where
$S(M_i)$ = $\{ (K_{i,j},f_{i,j}, e_{i,j}) \mid j = 1, \ldots, s_i \}$
for  $i$ $=$ $1,\ldots,n$.
Let  $L$  be a field that realizes
$S$  for  $\mathbf M_I(D)$  and let  $E$  be
the integral closure of  $D$  in  $L$.  Then:

\noindent {\bf{(\ref{Dedekind}.1)}} $[L:F]$ $=$ $m$, and $L$  has
distinct DVRs  $(V_{i,1}, N_{i,1}), \ldots,(V_{i,s_i},N_{i,s_i})$ such that
for each  $i,j$: $V_{i,j} \cap F$ $=$ $D_{M_i}$; $V_{i,j}/N_{i,j}$
is $D/M_i$-isomorphic to $K_{i,j}$; $[K_{i,j} : K_i)]$ $=$
$f_{i,j}$, where  $K_i$ $=$ $D/M_i$; and, $M_iV_{i,j}$ $=$ ${N_{i,j}}^{e_{i,j}}$. Also, for
$i$ $=$ $1,\ldots,n$, $V_{i,1}, \ldots, V_{i,s_i}$ are all of the
extensions of $D_{M_i}$ to  $L$, so  $\mathbf M_I(E)$ $=$ $\{N_{i,j}
\cap E \mid i = 1,\ldots,n$ and $j$ $=$ $1,\ldots,s_i\}$.

\noindent
{\bf{(\ref{Dedekind}.2)}}
$E$
is a Dedekind domain
that is a finite separable integral extension domain of  $D$, and
$IE$ = ${M_1}^{e_1} \cdots {M_n}^{e_n}E$
= ${P_{1,1}}^{e_1e_{1,1}} \cdots {P_{n,s_n}}^{e_ne_{n,s_n}}$,
where  $P_{i,j}$ $=$ $N_{i,j} \cap E$
for $i$ $=$ $1,\ldots,n$ and
$j$ = $1, \ldots, s_i$.
}
\end{rema}

\begin{proof}
(\ref{Dedekind}.1) follows immediately from (a) - (c) of
Definition~\ref{realizes}.

For (\ref{Dedekind}.2), $E$ is a Dedekind domain, by \cite[Theorem
19, p. 281]{ZS1}, and  $E$  is a finite separable integral extension domain of
$D$, by \cite[Corollary 1, p. 265]{ZS1}, since  $L$  is a finite
separable algebraic extension field of  $F$.  Also, $V_{i,j}$ $=$
$E_{P_{i,j}}$, so $IV_{i,j}$ $=$ $(IE)V_{i,j}$ $=$
$(ID_{M_i})V_{i,j}$ $=$ $({M_i}^{e_i}D_{M_i})V_{i,j}$ $=$
$({M_i}{V_{i,j}})^{e_i}$ $=$ ${N_{i,j}}^{e_ie_{i,j}}$. Since the
ideals  $P_{i,j}$  are the only prime ideals in  $E$  that lie over
$M_i$  (for  $i$ $=$ $1,\ldots,n$ and $j$ = $1, \ldots, s_i$) and
since the  $P_{i,j}$  are comaximal, it follows that $IE$ $=$
${P_{1,1}}^{e_1e_{1,1}} \cdots {P_{n,s_n}}^{e_ne_{n,s_n}}$.
\end{proof}

Theorem~\ref{induct} is the main result of this section; it shows
that every ideal  $I$  as in Notation~\ref{appmax} extends to a
radical-power ideal in some finite integral extension Dedekind
domain.  This theorem is proved in \cite[(2.11.1)]{HRR} by composing
$n$  related consistent systems. We give this different proof here
since it suggests the proof of the analogous ``finite-residue-field
degree'' result given in Theorem \ref{x1}.

\begin{theo}
\label{induct} With the notation of (\ref{appmax}) and
(\ref{Dedekind}), assume that  $n$ $>$ $1$.  Then the system  $S$
$=$ $\{S(M_1),\ldots,S(M_n)\}$ is a realizable $ e_1 \cdots
e_n$-consistent system for $\mathbf M_I(D)$, where, for  $i$ $=$
$1,\ldots,n$, $S(M_i)$ $=$ $\{(K_{i,j},1,\frac{e_1 \cdots e_n}{e_i})
\mid j = 1,\ldots,e_i\}$. Therefore there exists a Dedekind domain
${E}$  that is a finite separable integral extension domain  of  $D$
such that  $[L:F]$ $=$ $e_1 \cdots e_n$, where  $L$ (resp., $F$) is
the quotient field of  $E$  (resp., $D$),  and, for  $i$ $=$
$1,\ldots,n$, there exist exactly  $e_i$  maximal ideals
$N_{i,1},\ldots,N_{i,e_i}$ in  $E$  that lie over  $M_i$  and, for
$j$ $=$ $1,\ldots,e_i$, $[(E/N_{i,j}) : (D/M_i)]$ $=$ $1$ and
$M_iE_{N_{i,j}}$ $=$ ${N_{i,j}}^{\frac{e_1 \cdots
e_n}{e_i}}E_{N_{i,j}}$, so  $IE$ $=$ $(\Rad(IE))^{e_1 \cdots e_n}$.
\end{theo}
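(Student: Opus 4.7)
The plan is to mimic the two-stage composition used in the proof of Theorem~\ref{unrelated}: build a realization $L$ of $S$ as a tower $F\subset L_1\subset L$, realizing each stage via Krull's Theorem~\ref{GK}(i), and choose the two intermediate systems so that ramification indices multiply along the tower to give the required pattern at each $M_i$. First one checks that $S$ is indeed an $m$-consistent system (the sum at $M_i$ is $e_i\cdot(m/e_i)=m$) and disposes of the trivial case $m=1$.

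For Stage~1, set $a=e_n$ and single out the index $n$. Define an $a$-consistent system $S_1$ on $\mathbf M_I(D)$ by
\[
S_1(M_i)=\{(D/M_i,\,1,\,a)\}\quad(i<n),\qquad S_1(M_n)=\{(D/M_n,\,1,\,1)\mid j=1,\dots,e_n\}.
\]
Since $s_i=1$ for every $i<n$ and $n>1$, Theorem~\ref{GK}(i) gives a separable realization $L_1/F$ of degree $a$. Its integral closure $E_1$ of $D$ has, for $i<n$, a unique maximal ideal $P_i$ over $M_i$ with $f=1$ and $e=a$, and exactly $e_n$ maximal ideals $P_{n,1},\dots,P_{n,e_n}$ over $M_n$, each with $f=e=1$.

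For Stage~2, set $b=e_1\cdots e_{n-1}$ so that $ab=m$, and define a $b$-consistent system $S_2$ on $\mathbf M_I(E_1)$ by
\[
S_2(P_i)=\{(D/M_i,\,1,\,b/e_i)\mid j=1,\dots,e_i\}\quad(i<n),\qquad S_2(P_{n,j})=\{(D/M_n,\,1,\,b)\}.
\]
The defining sums are $e_i\cdot(b/e_i)=b$ and $b=b$; moreover $s=1$ at each of the rings $(E_1)_{P_{n,j}}$, so Theorem~\ref{GK}(i) applies again to yield a separable extension $L/L_1$ of degree $b$, realizing $S_2$. Then $[L:F]=ab=m$, and we let $E$ denote the integral closure of $D$ in $L$.

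Finally, multiplying ramification indices along the tower $D\subset E_1\subset E$ shows that each $M_i$ with $i<n$ acquires exactly $e_i$ maximal ideals of $E$ with $f=1$ and $e=a\cdot(b/e_i)=m/e_i$, while $M_n$ acquires $e_n$ maximal ideals with $f=1$ and $e=1\cdot b=m/e_n$; hence $L$ realizes $S$. By (\ref{Dedekind}.2), $E$ is a finite separable Dedekind extension of $D$, and
\[
IE=\prod_{i,j}N_{i,j}^{\,e_i\cdot m/e_i}=\Bigl(\prod_{i,j}N_{i,j}\Bigr)^{m}=(\Rad(IE))^{m}.
\]
The only real obstacle is the bookkeeping of residue-field isomorphisms and ramification through the tower, but because every $f_{i,j}$ appearing in both $S_1$ and $S_2$ is $1$, all residue extensions are trivial and the tower computation collapses to the standard multiplicativity of ramification indices.
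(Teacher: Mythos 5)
Your proposal is correct and takes essentially the same route as the paper: compose two consistent systems along a tower $F\subset L_1\subset L$, each stage realizable by Krull's Theorem~\ref{GK}(i) because some local component has $s=1$, then multiply ramification indices (all residue degrees being $1$) and conclude $IE=(\Rad(IE))^{e_1\cdots e_n}$ via (\ref{Dedekind}.2). The only difference is cosmetic: you split $M_n$ into $e_n$ unramified primes in the first stage and carry out the splitting at $M_1,\ldots,M_{n-1}$ in the second, whereas the paper does the reverse (first stage of degree $e_1\cdots e_{n-1}$ splitting $M_1,\ldots,M_{n-1}$ and totally ramifying $M_n$, second stage of degree $e_n$); the bookkeeping is otherwise identical.
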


\begin{proof}
If  $e_h$ $=$ $1$  for some  $h$ $=$ $1,\ldots,n$, then
since  $K_{i,j}$ $\cong$ $D/M_i$  for all $i,j$, it
follows that condition (1)  of Definition~\ref{consist}   is
satisfied, and it is readily checked that condition (2)
 of Definition~\ref{consist}
is satisfied with  $m$ $=$ $e_1 \cdots e_n$, so $S$ is an $e_1
\cdots e_n$-consistent system for $\mathbf M_I(D)$. Since the
cardinality of  $S(M_h)$  is $e_h$ $=$ $1$, $S$ is realizable
for  $\mathbf M_I(D)$, by
Theorem~\ref{GK}(i). Hence the system $S$  is a realizable $e_1
\cdots e_n$-consistent system for $\mathbf M_I (D)$. Let  $L$  be a
realization of  $S$  for  $\mathbf M_I(D)$ (so $[L:F]$ $=$ $e_1
\cdots e_n$, by (a) of Definition~\ref{realizes}), and let $E$ be
the integral closure of $D$ in $L$. Then  $E$ is a Dedekind domain
that is a finite separable integral extension domain of  $D$, by
Remark~\ref{Dedekind}.2, and it readily follows from either
Remark~\ref{Dedekind}.1  or the prescription given by $S$ that, for $i$ $=$
$1,\ldots,n$, there exist exactly  $e_i$ maximal ideals
$N_{i,1},\ldots,N_{i,e_i}$ in $E$  that lie over $M_i$ and, for $j$
$=$ $1,\ldots,e_i$, $E/N_{i,j}$ $\cong$ $D/M_i$ and $M_iE_{N_{i,j}}$
$=$ ${N_{i,j}}^{\frac{e_1 \cdots e_n}{e_i}}E_{N_{i,j}}$, so $M_iE$
$=$ $\Pi_{j=1}^{e_i} {N_{i,j}}^{\frac{e_1 \cdots e_n}{e_i}}$.
Therefore, since $I$ $=$ ${M_1}^{e_1} \cdots {M_n}^{e_n}$, it
follows that $IE$ $=$ $({M_1}^{e_1} \cdots {M_n}^{e_n})E$ $=$
$\Pi_{i=1}^n[(\Pi_{j=1}^{e_i}  {N_{i,j}}^{\frac{e_1 \cdots
e_n}{e_i}})^{e_i}]$, so  $IE$ $=$ $(\Rad(IE))^{e_1 \cdots e_n}$.
Thus it may be assumed that  $e_i$ $>$ $1$  for  $i$ $=$
$1,\ldots,n$.

Let  $S_1$ $=$ $\{S_1(M_1),\ldots,S_1(M_n)\}$, where  $S_1(M_n)$ $=$
$\{((D/M_n),1,e_1 \cdots e_{n-1})\}$ and  for  $i$ $=$
$1,\ldots,n-1$, $S_1(M_i)$ $=$ $\{(K_{i,j},1, \frac{e_1 \cdots
e_{n-1}}{e_i}) \mid j = 1,\ldots,e_i\}$. Then it follows as in the
preceding paragraph that $S_1$  is a realizable  $e_1 \cdots
e_{n-1}$-consistent system for  $\mathbf M_I(D)$. Let  $L_1$  be a
realization of  $S_1$  for  $\mathbf M_I(D)$ (so  $[L_1:F]$ $=$ $e_1
\cdots e_{n-1}$, by (a) of Definition~\ref{realizes}), and let
$E_1$ be the integral closure of  $D$  in  $L_1$, so  $E_1$  is a
Dedekind domain that is a finite separable integral extension domain
of  $D$. Also, for  $i$ $=$ $1,\ldots,n-1$  there exist exactly
$e_i$ maximal ideals $Q_{i,1},\ldots,Q_{i,e_i}$ in  $E_1$  that lie
over $M_i$ and, for $j$ $=$ $1,\ldots,e_i$, $E_1/Q_{i,j}$ $\cong$
$D/M_i$ and $M_i(E_1)_{Q_{i,j}}$ $=$ ${Q_{i,j}}^{\frac{e_1 \cdots
e_{n-1}}{e_i}}(E_1)_{Q_{i,j}}$, so $M_iE_1$ $=$
$\Pi_{j=1}^{e_i}{Q_{i,j}}^{\frac{e_1 \cdots e_{n-1}}{e_i}}$.
Further, there is a unique maximal ideal  $Q_{n,1}$  in  $E_1$ that
lies over  $M_n$, $E_1/Q_{n,1}$ $\cong$ $D/M_n$  and
$M_n(E_1)_{Q_{n,1}}$ $=$ ${Q_{n,1}}^{e_1 \cdots
e_{n-1}}(E_1)_{Q_{n,1}}$, so $M_nE_1$ $=$ ${Q_{n,1}}^{e_1 \cdots
e_{n-1}}$.

It follows that there are  exactly $m'$ $=$ $e_1 + \cdots +
e_{n-1}+1$ ideals $Q_{1,1},\ldots,Q_{n,1}$
in  $\mathbf M_I(E_1)$, so let  ${S_2}$ $=$
$\{S_2(Q_{1,1}),$ $\ldots,$ $S_2(Q_{1,e_1}),$ $\ldots,$
$S_2(Q_{n-1,1}) \ldots, S_2(Q_{n-1,e_{n-1}}),S_2(Q_{n,1})\}$, where $S_2(Q_{n,1})$ $=$
$\{(K_{n,j},1,1) \mid j = 1,\ldots,e_n\}$, and for all other
$(i,j)$, $S_2(Q_{i,j})$ $=$ $\{((D/M_i),1,e_n)\}$. Then it follows
as in the second preceding paragraph that ${S_2}$  is a realizable
$e_n$-consistent system for  $\mathbf M_I(E_1)$. Let  $L$  be a
realization of  ${S_2}$  for  $\mathbf M_I(E_1)$ (so  $[L:L_1]$ $=$
$e_n$, by (a) of Definition~\ref{realizes}, so $[L:F]$ $=$ $e_1 \cdots
e_n$), and let $E$ be the integral closure of  $E_1$  in  $L$, so
$E$  is a Dedekind domain that is a finite separable integral
extension domain of $E_1$,  so also of  $D$. Also, for  $i$ $=$
$1,\ldots,m'-1$  and $j$ $=$ $1,\ldots,e_i$  there exists exactly
one  ideal  $N_{i,j}$ in $\mathbf M_I(E)$  that lies over $Q_{i,j}$,
$E/N_{i,j}$ $\cong$ $E_1/Q_{i,j}$, and $Q_{i,j}E_{N_{i,j}}$ $=$
${N_{i,j}}^{e_n}E_{N_{i,j}}$, so $Q_{i,j}E$ $=$ ${N_{i,j}}^{e_n}$
(so it follows from the preceding paragraph that there exist exactly
$e_i$  maximal ideals $N_{i,1},\ldots,N_{i,e_i}$  in  $E$  that lie
over  $M_i$  and, for  $i$ $=$ $1,\ldots,n-1$ and  $j$ $=$
$1,\ldots,e_i$, $E/N_{i,j}$ $\cong$ $D/M_i$ and $M_iE_{N_{i,j}}$ $=$
${N_{i,j}}^{\frac{e_1 \cdots e_n}{e_i}}E_{N_{i,j}}$, so  $M_iE$ $=$
$\Pi_{j=1}^{e_i}{N_{i,j}}^{\frac{e_1 \cdots e_n}{e_i}}$). And  there
exist exactly  $e_n$ ideals $N_{n,1},\ldots,N_{n,e_n}$ in $\mathbf
M_I(E)$  that lie over  $Q_{n,1}$  and, for  $j$ $=$ $1,\ldots,e_n$,
$E/N_{n,j}$ $\cong$ $E_1/Q_{n,1}$ and  $Q_{n,1}E_{N_{n,j}}$ $=$
${N_{n,j}}E_{N_{n,j}}$, so $Q_{n,1}E$ $=$ $\Pi_{j=1}^{e_n}{N_{n,j}}$
(so it follows from the preceding paragraph that there exist exactly
$e_n$  maximal ideals $N_{n,1},\ldots,N_{n,e_n}$  in  $E$  that lie
over  $M_n$  and, for  $j$ $=$ $1,\ldots,e_n$, $E/N_{n,j}$ $\cong$
$D/M_n$  and $M_nE_{N_{n,j}}$ $=$
${N_{n,j}}^{\frac{e_1 \cdots e_n}{e_n}}E_{N_{n,j}}$, so  $M_nE$
$=$ $\Pi_{j=1}^{e_n}{N_{n,j}}^{\frac{e_1 \cdots e_n}{e_n}}$).
It follows that  $L$  is a realization of the $e_1
\cdots e_n$-consistent system  $S$ for $\mathbf M_I(D)$, (with  $S$
as in the statement of this theorem), so  $S$  is a realizable  $e_1
\cdots e_n$-consistent system for  $\mathbf M_I(D)$.

Finally, $[(E/N_{i,j}) : (D/M_i)]$ $=$ $1$  for
$i$ $=$ $1,\ldots,n$  and  $j$ $=$ $1,\ldots,e_i$,,
by the preceding paragraph,  and,
since  $I$ $=$ ${M_1}^{e_1} \cdots {M_n}^{e_n}$  and
$M_iE_{N_{i,j}}$ $=$ ${N_{i,j}}^{\frac{e_1 \cdots e_n}{e_i}}E_{N_{i,j}}$
for  $i$ $=$ $1,\ldots,n$,
it follows that  $IE$ $=$ $(\Rad(IE))^{e_1 \cdots e_n}$.
\end{proof}

\begin{rema}
\label{new.analogous2} {\em {\bf{(\ref{new.analogous2}.1)}} If no
prime integer divides all of the $e_i$ in Theorem \ref{induct}, we
show in Theorem \ref{inductnew} that the exponent $e_1 \cdots e_n$
in Theorem \ref{induct} can be replaced by the least common multiple
of the $e_i$. So for example if $n$ = $3$ and $(e_1,e_2,e_3)$ =
$(4,6,5)$, we get $IE$ = $(\Rad(IE))^{60}$ instead of
$(\Rad(IE))^{120}$. See also Remark \ref{analogous2}.

\noindent
{\bf{(\ref{new.analogous2}.2)}}
With the notation of Theorem \ref{induct}, let  $d$  be a common
multiple of  $e_1,\ldots,e_{n-1}$  and let  $d^*$ $=$ $de_n$.
(Thus, for example, if  $e_1$ $= \cdots =$ $e_{n-1}$, then
$d^*$ $=$ $e_1e_n$  is (depending on $e_1$  and  $n$)
potentially considerably smaller than  $e_1 \cdots e_n$.)
Then the following  $d^*$-consistent
system  $S^*$  is realizable for  $\mathbf M_I(D)$:
$S^*$ $=$ $\{S^*(M_1),\ldots,S^*(M_n)\}$  with
$S^*(M_i)$ $=$ $\{(K_{i,j},1,\frac{d^*}{e_i}) \mid j = 1,\ldots,e_i\}$
for  $i$ $=$ $1,\ldots,n$.
Also, $IE^*$ $=$ $(\Rad(IE^*))^{d^*}$, where  $E^*$  is the integral
closure of  $D$  is a realization of  $S^*$  for  $\mathbf M_I(D)$.
}
\end{rema}

\begin{proof}
For (\ref{new.analogous2}.2), the proof is the same as the proof of
Theorem \ref{induct} by composing the following two realizable
consistent systems  ${S_1}^*,{S_2}^*$. Here, ${S_1}^*$ $=$
$\{{S_1}^*(M_1),\ldots,{S_1}^*(M_n)\}$  with ${S_1}^*(M_i)$ $=$
$\{(K_{i,j},1,\frac{d^*}{e_ie_n}) \mid j = 1,\ldots,e_i\}$ for  $i$
$=$ $1,\ldots,n-1$, and ${S_1}^*(M_n)$ $=$
$\{((D/M_n),1,\frac{d^*}{e_n})\}$, so  ${S_1}^*$  is a realizable
$\frac{d^*}{e_n}$-consistent system for $\mathbf M_I(D)$.  Let
${E_1}^*$  be the integral closure of  $D$  is a realization
${L_1}^*$  of  ${S_1}^*$  for $\mathbf M_I(D)$, and for  $i$ $=$
$1,\ldots,n$ let  $Q_{i,1},\ldots,Q_{i,e_i}$  be the maximal ideals
in  $E_1$  that lie over  $M_i$. Let \begin{equation*} {S_2}^* =
\{{S_2}^*(Q_{1,1}),\ldots,{S_2}^*(Q_{n-1,e_{n-1}}),{S_2}^*(Q_{n,1})\}
\end{equation*}
 with ${S_2}^*(Q_{i,j})$ $=$ $\{((D/M_i),1,e_i)\}$  for  $i$ $=$
$1,\ldots,n-1$ and  $j$ $=$ $1,\ldots,e_i$, and ${S_2}^*(Q_{n,1})$
$=$ $\{(K_{n,j},1,1) \mid j = 1,\ldots,e_n\}$, so  ${S_2}^*$  is a
realizable  $e_n$-consistent system for  $\mathbf M_I(E_1)$.
\end{proof}

The following corollary is essentially given in
\cite[(2.10)]{HRR},
except for the exponent
$e_1 \cdots e_n$ that  occurs here by using
Theorem \ref{induct}.

\begin{coro}
\label{fixer} Let  $I$ $=$ ${M_1}^{e_1} \cap \cdots  \cap
{M_n}^{e_n}$ be an irredundant primary decomposition of the nonzero
proper ideal $I$ of the Dedekind domain $D$.
  Then there exists a
finite separable integral extension Dedekind domain $E$ of  $D$ such
that  $IE$ $=$ $(\Rad(IE))^m$, where $m = e_1 \cdots e_n$.
\end{coro}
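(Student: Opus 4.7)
The plan is to deduce Corollary \ref{fixer} directly from Theorem \ref{induct} after a small bookkeeping step that reconciles the two statements: Theorem \ref{induct} phrases the ideal $I$ as a product $M_1^{e_1} \cdots M_n^{e_n}$, whereas the corollary phrases it as an irredundant primary decomposition (intersection). So the first step is to observe that in a Dedekind domain $D$, the distinct maximal ideals $M_1,\ldots,M_n$ are pairwise comaximal, hence so are the $M_i^{e_i}$, and therefore
\begin{equation*}
I = M_1^{e_1} \cap \cdots \cap M_n^{e_n} = M_1^{e_1} \cdot M_2^{e_2} \cdots M_n^{e_n}.
\end{equation*}
This places $I$ in exactly the form assumed in Notation~\ref{appmax} and Theorem~\ref{induct}.

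Next I would split into cases on $n$. If $n=1$, then $I = M_1^{e_1}$ and one may take $E = D$: clearly $\Rad(I) = M_1$ and $I = (\Rad(I))^{e_1} = (\Rad(IE))^m$ with $m = e_1$, so there is nothing to do. If $n > 1$, apply Theorem~\ref{induct} to obtain a finite separable integral extension Dedekind domain $E$ of $D$ whose quotient field $L$ satisfies $[L : F] = e_1 \cdots e_n$ and for which the conclusion $IE = (\Rad(IE))^{e_1\cdots e_n}$ is exactly what Theorem~\ref{induct} delivers. Taking $m = e_1 \cdots e_n$ gives the corollary.

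There is no real obstacle; the only point requiring care is making sure the hypotheses of Theorem~\ref{induct} are satisfied, namely that $I$ is being viewed as the product $M_1^{e_1}\cdots M_n^{e_n}$ (as in Notation~\ref{appmax}) rather than as an intersection, and that the degenerate case $n=1$ (excluded from Theorem~\ref{induct}) is handled trivially. Once these are noted, the corollary is essentially a restatement of the final assertion of Theorem~\ref{induct}.
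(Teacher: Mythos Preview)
Your proposal is correct and follows essentially the same approach as the paper: handle the trivial case $n=1$ with $E=D$, and for $n>1$ convert the intersection to a product via comaximality of the $M_i^{e_i}$ and invoke Theorem~\ref{induct} directly.
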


\begin{proof}
If  $n$ $=$ $1$, then  $I$ $=$ ${M_1}^{e_1}$ $=$ $(\Rad(I))^{e_1}$,
so the conclusion holds with  $E$ $=$ $D$  and  $m$ $=$ $e_1$. If
$n$ $>$ $1$, then the conclusion follows immediately from Theorem
\ref{induct}, since $I$ $=$ ${M_1}^{e_1} \cap \cdots  \cap
{M_n}^{e_n}$ $=$ ${M_1}^{e_1} \cdots  {M_n}^{e_n}$.
\end{proof}

\begin{coro}
\label{coroK2} Let  $k$ $=$ ${\pi_1}^{e_1} \cdots {\pi_n}^{e_n}$  be
the factorization of the positive integer $k > 1$ as a product of
distinct prime integers $\pi_i$. . Then there exists an extension
field $L$ of $\mathbb Q$ of degree $e_1 \cdots e_n$ such that $kE$
$=$ $[\Pi_{i=1}^n(\Pi_{j=1}^{e_i}p_{i,j})]^{e_1 \cdots e_n}$, where
$E$ is the integral closure of  $\mathbb Z$  in $L$ and $\mathbf
M_{k \mathbb Z} (E)$ $=$ $\{p_{1,1},\ldots,p_{n,e_n}\}$.

\end{coro}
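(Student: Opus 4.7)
The plan is to deduce Corollary \ref{coroK2} directly from Theorem \ref{induct} by specializing $D = \mathbb{Z}$, $F = \mathbb{Q}$, and $M_i = \pi_i\mathbb{Z}$ for $i = 1,\ldots,n$. Since $\mathbb{Z}$ is a Dedekind domain, the $M_i$ are distinct maximal ideals with residue fields $\mathbb{Z}/\pi_i\mathbb{Z}$, and unique factorization in $\mathbb{Z}$ gives $k\mathbb{Z} = M_1^{e_1} \cdots M_n^{e_n}$, which is precisely the shape of the ideal $I$ in Notation \ref{appmax}.

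For the generic case $n > 1$, I would simply invoke Theorem \ref{induct}. It furnishes a separable algebraic extension $L/\mathbb{Q}$ of degree $e_1\cdots e_n$ such that the integral closure $E$ of $\mathbb{Z}$ in $L$ has, for each $i$, exactly $e_i$ maximal ideals $p_{i,1},\ldots,p_{i,e_i}$ lying over $\pi_i\mathbb{Z}$, each satisfying $E/p_{i,j} \cong \mathbb{Z}/\pi_i\mathbb{Z}$ and $\pi_i E_{p_{i,j}} = p_{i,j}^{(e_1\cdots e_n)/e_i} E_{p_{i,j}}$. Hence $\mathbf M_{k\mathbb Z}(E) = \{p_{1,1},\ldots,p_{n,e_n}\}$, and since the $p_{i,j}$ are pairwise comaximal,
\begin{equation*}
kE \;=\; \prod_{i=1}^n (\pi_i E)^{e_i} \;=\; \prod_{i=1}^n \Bigl(\prod_{j=1}^{e_i} p_{i,j}^{(e_1\cdots e_n)/e_i}\Bigr)^{e_i} \;=\; \Bigl[\prod_{i=1}^n\prod_{j=1}^{e_i} p_{i,j}\Bigr]^{e_1\cdots e_n},
\end{equation*}
which is the stated conclusion.

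The case $n = 1$ is not covered by the hypothesis $n > 1$ of Theorem \ref{induct}; note also that Corollary \ref{fixer}'s treatment of $n = 1$ (take $E = D$) would yield only $[L:\mathbb{Q}] = 1$ rather than the required $e_1$. I would dispose of this case separately by directly appealing to Theorem \ref{GK}: the $e_1$-consistent system $S(\pi_1\mathbb{Z}) = \{(\mathbb{Z}/\pi_1\mathbb{Z},\,1,\,1) \mid j = 1,\ldots,e_1\}$ satisfies condition (ii) of that theorem, because $\mathbb{Q}$ admits infinitely many DVRs (one for each rational prime $q \neq \pi_1$). Thus $S$ is realizable by some separable extension $L/\mathbb{Q}$ of degree $e_1$ in which $\pi_1$ splits completely into $e_1$ primes $p_{1,1},\ldots,p_{1,e_1}$, and then $kE = (\pi_1 E)^{e_1} = (p_{1,1} \cdots p_{1,e_1})^{e_1}$.

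Since both Theorem \ref{induct} and Theorem \ref{GK} are already in hand, no serious obstacle remains; the only subtlety worth flagging is that $n = 1$ must be handled via Theorem \ref{GK}(ii) rather than by a direct appeal to Corollary \ref{fixer}, because the degree of $L/\mathbb{Q}$ is part of the conclusion.
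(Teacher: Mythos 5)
Your proposal is correct and is essentially the paper's intended argument: the corollary is stated as an immediate specialization of Theorem~\ref{induct} to $D=\mathbb Z$, $I=k\mathbb Z$, $M_i=\pi_i\mathbb Z$, and your computation of $kE$ from the local data $E/p_{i,j}\cong\mathbb Z/\pi_i\mathbb Z$ and $\pi_iE_{p_{i,j}}=p_{i,j}^{(e_1\cdots e_n)/e_i}E_{p_{i,j}}$ is exactly what is needed. Your separate treatment of $n=1$ via Theorem~\ref{GK}(ii) (using that $\mathbb Q$ has infinitely many DVRs) is a sound way to cover a case the paper leaves implicit, since Theorem~\ref{induct} assumes $n>1$.
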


Remark~\ref{newextend}
shows that $I$ sometimes extends to a radical
power ideal in a simpler realizable consistent system.

\begin{rema}
\label{newextend} {\em With the notation of (\ref{appmax}) and
(\ref{Dedekind}), assume\footnote{$D$  may have a residue field
$D/M_i$  that has no extension field  ${K_i}^{(1)}$  with
$[{K_i}^{(1)}:(D/M_i)]$ $=$ $e_i$; for example, $D/M_i$ may be
algebraically closed, see also Example 3 in \cite{Rib}.} that, for
$i$ $=$ $1,\ldots,n$, there exists a simple algebraic extension
field ${K_i}^{(1)}$  of $D/M_i$ such that $[{K_i}^{(1)}:(D/M_i)]$
$=$ $e_i$. Then the system $S^{(1)}$ $=$
$\{{S}^{(1)}(M_1),\ldots,{S}^{(1)}(M_n)\}$, where ${S}^{(1)}(M_i)$
$=$ $\{({K_i}^{(1)},e_i,\frac{e_1 \cdots e_n}{e_i})\}$ for  $i$ $=$
$1,\ldots n$, is an $e_1 \cdots e_n$-consistent system for $\mathbf
M_I(D)$. By Theorem~\ref{GK}(i), it is realizable for  $\mathbf
M_I(D)$.  Also, if  $E$ is the integral closure of  $D$ in a
realization $L$ of $S^{(1)}$ for $\mathbf M_I(D)$, then $IE$ $=$
$J^{e_1 \cdots e_n}$, where $J = \Rad(IE)$.   More specifically,
since $E$ is the integral closure of $D$  in a realization $L$  of
$S^{(1)}$ for $\mathbf M_I(D)$, for $i$ $=$ $1,\ldots,n$, there
exists a unique maximal ideal $N_i$ in $E$ that lies over  $M_i$,
and then $E/N_i$ $\cong$ ${K_i}^{(1)}$ and $M_iE_{N_i}$ $=$
${N_i}^{\frac{e_1 \cdots e_n}{e_i}}E_{N_i}$, so $M_iE$ $=$
${N_i}^{\frac{e_1 \cdots e_n}{e_i}}$, so  $IE$ $=$ $(\Pi_{i=1}^n
{M_i}^{e_i}) E$ $=$ $\Pi_{i=1}^n (N_i ^{\frac{e_1 \cdots
e_n}{e_i}})^{e_i}$ $=$ $J^{e_1 \cdots e_n}$, where  $J$ $=$ $N_1
\cdots N_n$. }
\end{rema}

\begin{rema}
\label{alternate}
{\em
Let  $V_i$ $=$ $D_{M_i}$  and  $S$ = $\{ S(V_1), \ldots, S(V_n) \}$ be an
arbitrary $m$-consistent system for
$\mathbf M_I(D)$ = $\{ M_1 , \ldots, M_n\}$, where,
for  $i$ $=$ $1,\ldots,n$,
$S(V_i)$ = $\{ (K_{i,j}, f_{i,j},e_{i,j})
\mid j = 1, \ldots, s_i \}$.  If we
consider the $s_i$, $K_{i,j}$, and $f_{i,j}$
as fixed in the $m$-consistent system for
$\mathbf M_I(D)$ and the $e_{i,j}$
as variables subject to the constraint
$\sum_{j=1}^{s_i} e_{i,j}f_{i,j}$ = $m$
for each $i$, then
$S$ gives a map $\mathbb{N_+}^n \to
\mathbb{N_+}^t$ (where $t$ = $\sum_{i=1}^n s_i$)
defined by $$(e_1, \ldots, e_n) \mapsto
(e_1e_{1,1}, \ldots, e_1e_{1,s_1}, \ldots,
e_ne_{n,1}, \ldots, e_ne_{n,s_n}).$$ If we are
only interested in the projective equivalence
class of $IE$, it seems appropriate
to consider the induced map
given by $S$ : $\mathbb{N_+}^n \to
\mathbf{P}^t(\mathbb{N_+})$
= $ \mathbb{N_+}^t / \!\! \sim$,
where $(a_1, \ldots, a_t) \sim
(b_1, \ldots, b_t)$ if
$(a_1, \ldots, a_t)$ =
$(cb_1, \ldots, cb_t)$
for some $c \in \mathbb{Q}$.
In this case, Theorem \ref{unrelated}
shows that the equations
$\sum_{j=1}^{s_i} e_{i,j}f_{i,j}$ = $m$
are the only restrictions on the image
of this map into
$\mathbf{P}^t(\mathbb{N_+})$.
{From} this point of view, if we want an equation
$IE$ = $(\Rad(IE))^k$ for some finite separable
integral extension Dedekind domain  $E$  of  $D$  and
for some positive
integer  $k$, then it is not necessary to  compose
two realizable consistent systems, as in the proof
of Theorem \ref{induct}.
Indeed, it suffices to observe that
we have an $m$-consistent
system $S$ $=$ $\{S(M_1),\ldots,S(M_n)\}$,
where  $m$ $=$ $e_1 \cdots e_n$  and  $S(M_i)$ $=$
$\{(K_{i,j},1,\frac{e_1 \cdots e_n}{e_i})
\mid j = 1,\ldots,e_i\}$  for  $i$ $=$ $1,\ldots,n$
(realizable or not), and then apply
Theorem \ref{unrelated}.
}
\end{rema}

To extend Theorem \ref{induct} to ideals in
Noetherian domains of altitude one, we use
the following result from \cite{HRR}.

\begin{prop}
\label{prin.reduction.lemma}
{\em \cite[(2.6)]{HRR}}
Let $R$ be a Noetherian domain of
altitude one with quotient field $F$, let $I$  be a nonzero
proper ideal in $R$, let  $L$ be a finite algebraic extension
field of $F$, let $E$ be the integral closure of $R$ in $L$,
and assume there exist distinct maximal ideals $N_1, \ldots, N_n$ of $E$
and positive integers  $k_1,\ldots,k_n,h$
such that $IE$ = $({N_1}^{k_1} \cdots {N_n}^{k_n})^h$.  Then there
exists a finite integral extension domain $A$ of $R$ with quotient
field $L$ and distinct maximal ideals $P_1, \ldots, P_n$ of $A$
such that, for  $i$ $=$ $1,\ldots,n$:
 \begin{enumerate}
 \item[{\rm(i)}]
 $P_iE$ $=$ $N_i$.

 \item[{\rm(ii)}]
$E/N_i$ $\cong$ $A/P_i$.

\item[{\rm (iii)}]
 $(IA)_a$ = $(({P_1}^{k_1} \cdots {P_n}^{k_n})^h)_a$.
\end{enumerate}
\end{prop}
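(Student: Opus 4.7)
The plan is to construct $A$ as a finite $R$-subalgebra of $E$ with quotient field $L$ by starting from a simple finite algebra and then adjoining elements of $E$ that force the maximal-ideal and residue-field structure described by (i) and (ii). Since $L/F$ is finite algebraic, I first choose $\theta \in E$ with $L = F[\theta]$, so that $A_0 := R[\theta]$ is a finite $R$-subalgebra of $E$ with quotient field $L$.

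Using prime avoidance and the Chinese Remainder Theorem applied to the pairwise comaximal ideals $N_1, \ldots, N_n$, I would select for each $i = 1, \ldots, n$ elements $\pi_i, \tau_i \in E$ with the following properties: $\pi_i$ generates the maximal ideal of $E_{N_i}$ (such a $\pi_i$ exists because the hypothesis $IE = (N_1^{k_1} \cdots N_n^{k_n})^h$ together with altitude one forces $N_i E_{N_i}$ to be principal) and $\pi_i \notin N_j$ for $j \neq i$; while $\tau_i$ lies in $N_j$ for each $j \neq i$ and its image in $E/N_i$ is a primitive element of this field extension of $R/(N_i \cap R)$. I then set $A = R[\theta, \pi_1, \ldots, \pi_n, \tau_1, \ldots, \tau_n]$, which is finite over $R$ with quotient field $L$, and $P_i = N_i \cap A$.

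Properties (i) and (ii) should follow directly from the construction: for (ii), the image of $\tau_i$ in $A/P_i$ generates $E/N_i$ over $R/(N_i \cap R)$, so the natural injection $A/P_i \hookrightarrow E/N_i$ is surjective; for (i), $\pi_i \in P_i$ gives $P_i E_{N_i} = N_i E_{N_i}$ locally, while $\pi_i$ is a unit in $E_{N_j}$ for $j \neq i$, so $P_i E_{N_j} = E_{N_j}$ there, and localization at each maximal ideal of $E$ confirms $P_i E = N_i$.

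The main obstacle will be (iii), which requires a Rees-valuation argument. Since $A \subseteq E$ and both have quotient field $L$, the Rees valuation rings of $IA$ are, after normalization, the DVRs dominating the $A_{P_i}$, and these coincide with the DVRs dominating the $E_{N_i}$. Because $P_i E_{N_i} = N_i E_{N_i}$, we have $IE_{N_i} = N_i^{hk_i} E_{N_i} = (P_i E_{N_i})^{hk_i}$, so the Rees integer of $IA$ with respect to the valuation ring over $P_i$ equals $hk_i$, which is exactly the Rees integer of $(P_1^{k_1} \cdots P_n^{k_n})^h$ there. Invoking the Rees-valuation characterization of integral closure in a Noetherian domain of altitude one, the two ideals have the same integral closure in $A$, yielding $(IA)_a = ((P_1^{k_1} \cdots P_n^{k_n})^h)_a$.
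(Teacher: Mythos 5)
The paper itself gives no proof of this proposition (it is quoted from \cite[(2.6)]{HRR}), so your argument can only be judged on its own terms. Your overall strategy --- descend from $E$ to a finitely generated $R$-subalgebra $A\subseteq E$ by adjoining a generator of $L$, local uniformizers $\pi_i$, and residue-field generators $\tau_i$, then compare $IA$ with $(P_1^{k_1}\cdots P_n^{k_n})^h$ valuatively --- is sound. But the verification of (i) has a genuine gap: to conclude $P_iE=N_i$ you must rule out \emph{every} maximal ideal of $E$ lying over $P_i$, not just $N_1,\ldots,N_n$. The ring $E$ is in general not semilocal, and a priori some maximal ideal $Q\notin\{N_1,\ldots,N_n\}$ could contract to $P_i$, in which case $P_iE\subseteq Q$ and $P_iE\neq N_i$; your phrase ``localization at each maximal ideal of $E$'' is only checked at the $N_j$. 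The repair uses the hypothesis in an essential way: since $I\subseteq IE\cap R\subseteq N_i\cap R\subseteq P_i$, any maximal ideal $Q$ of $E$ containing $P_iE$ contains $IE=(N_1^{k_1}\cdots N_n^{k_n})^h$, hence equals some $N_j$, and then $\pi_i\in Q$ forces $Q=N_i$. Some observation of this kind must appear, or (i) is not established.

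Two further points need attention. First, you invoke primitive elements twice --- $L=F[\theta]$ and a single $\tau_i$ whose residue generates $E/N_i$ over $R/(N_i\cap R)$ --- but the proposition only assumes $L/F$ finite algebraic, and neither $L/F$ nor the residue extensions need be simple; this is harmless, since you may adjoin finitely many generators of $L$ over $F$ (scaled into $E$) and finitely many residue generators instead. Second, you should justify that $E_{N_i}$ is a DVR and that $E/N_i$ is a \emph{finite} extension of $R/(N_i\cap R)$: $E$ need not be a finite $R$-module (which is precisely why one must pass to $A$ at all), and both facts come from Krull--Akizuki, which makes $E$ a one-dimensional Noetherian integrally closed (hence Dedekind) domain with $E/N_i$ of finite length over $R$. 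With these repairs your (ii) goes through, and (iii) can even be shortened: once $P_iE=N_i$, the ideals $IA$ and $(P_1^{k_1}\cdots P_n^{k_n})^h$ have the same extension to $E$, and since every valuation overring of $A$ in $L$ contains the integral closure $E$, they have the same integral closure in $A$; the explicit count of Rees integers, while correct, is not needed.
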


The following corollary is the same as
\cite[(2.8.2)]{HRR}, except for the explicit exponent
$e_1 \cdots e_n$ given here.

\begin{coro}
\label{maincoro} Let  $R$  be a Noetherian domain of altitude one,
let $I$  be a nonzero proper ideal in  $R$, let  $R'$  be the
integral closure of  $R$  in its quotient field, and let  $IR'$ $=$
${M_1}^{e_1} \cdots {M_n}^{e_n}$ be a normal primary decomposition
of  $IR'$. Then there exists a finite separable integral extension
domain  $A$  of  $R$  such that $(IA)_a$ $=$ $((\Rad(IA))^{e_1
\cdots e_n})_a$, and if $A'$ denotes the integral closure of $A$ in
its quotient field, then  for each $P$ $\in$ $\mathbf M_I(A)$ we
have: (i)  $PA'$ is a maximal ideal,  and (ii) $A'/PA'$ $\cong$
$A/P$.
\end{coro}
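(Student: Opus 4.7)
The plan is to combine Theorem~\ref{induct} with the descent result Proposition~\ref{prin.reduction.lemma}. Since $R$ has altitude one, the Rees valuation rings of $I$ are exactly the DVRs $V_i := R'_{M_i}$, $i=1,\ldots,n$, and the $e_i$ coincide with the Rees integers of $I$. I would first use Theorem~\ref{induct} to build a suitable finite separable field extension $L/F$ (where $F = R_{(0)}$) capturing the desired factorization structure; then let $E$ be the integral closure of $R$ in $L$ (a Dedekind domain by Krull--Akizuki, since $R$ is altitude-one Noetherian); and finally apply Proposition~\ref{prin.reduction.lemma} to drop down to a finite integral extension $A$ of $R$ with quotient field $L$.

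If $n=1$ the claim reduces at once to Proposition~\ref{prin.reduction.lemma} applied with $L=F$, $E=R'$, $N_1=M_1$, $k_1=1$, $h=e_1$. If $n>1$, Theorem~\ref{induct} applied to $\{V_1,\ldots,V_n\}$ produces a realizable $e_1\cdots e_n$-consistent system $S$ with $S(V_i)=\{(R'/M_i,\,1,\,\frac{e_1\cdots e_n}{e_i})\mid j=1,\ldots,e_i\}$; fix a realization $L$, so that $[L:F]=e_1\cdots e_n$ and $L/F$ is separable. Put $E$ equal to the integral closure of $R$ in $L$. The DVRs $E_{N_{i,j}}=W_{i,j}$ indexed by the maximal ideals $N_{i,j}$ of $E$ lying over the $M_i\cap R$ are precisely the extensions of the $V_i$ to $L$, and the ramification data of $S$ together with $IR'=M_1^{e_1}\cdots M_n^{e_n}$ yield $v_{W_{i,j}}(I)=e_i\cdot\frac{e_1\cdots e_n}{e_i}=e_1\cdots e_n$ for every pair $(i,j)$. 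Since $E$ is Dedekind, this computation forces the literal ideal-theoretic equality $IE=\bigl(\prod_{i,j}N_{i,j}\bigr)^{e_1\cdots e_n}$.

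Now Proposition~\ref{prin.reduction.lemma}, applied with all $k_{i,j}=1$ and $h=e_1\cdots e_n$, yields a finite integral extension domain $A$ of $R$ with quotient field $L$ together with distinct maximal ideals $P_{i,j}$ of $A$ satisfying $P_{i,j}E=N_{i,j}$, $E/N_{i,j}\cong A/P_{i,j}$, and $(IA)_a=\bigl(\bigl(\prod_{i,j}P_{i,j}\bigr)^{e_1\cdots e_n}\bigr)_a$. Because $E$ is already integrally closed in $L$ and $A\subseteq E$ shares the quotient field $L$, the integral closure $A'$ of $A$ must equal $E$, so conditions (i) and (ii) of the corollary are exactly conclusions (i) and (ii) of the proposition. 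Finally, since $A\subseteq E$ is integral, the primes of $A$ minimal over $IA$ are the contractions to $A$ of the primes of $E$ minimal over $IE$, so $\mathbf M_I(A)=\{P_{i,j}\}$ and $\Rad(IA)=\prod_{i,j}P_{i,j}$; substituting into the factorization of $(IA)_a$ gives $(IA)_a=\bigl((\Rad(IA))^{e_1\cdots e_n}\bigr)_a$, as desired.

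The main technical hurdle I foresee lies in the valuation-theoretic bookkeeping of the second paragraph: one must carefully justify that $E$ is genuinely Dedekind (invoking Krull--Akizuki against the altitude-one Noetherian hypothesis on $R$ rather than on $R'$), that the Rees valuations of $IE$ are exactly the $W_{i,j}$ produced by the realization, and that the ramification data prescribed by $S$ combine correctly with the normal primary decomposition of $IR'$ to produce a \emph{uniform} Rees integer equal to $e_1\cdots e_n$ on every $W_{i,j}$. Once these points are in place, the application of Proposition~\ref{prin.reduction.lemma} and the identification $A'=E$ are essentially formal.
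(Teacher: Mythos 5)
Your proposal is correct and follows essentially the same route as the paper: for $n>1$ apply Theorem~\ref{induct} to the Dedekind domain $R'$ (Krull--Akizuki) to get a finite separable extension Dedekind domain $E$ with $IE=(\Rad(IE))^{e_1\cdots e_n}$, then descend via Proposition~\ref{prin.reduction.lemma}, with the $n=1$ case handled directly by that proposition. The extra bookkeeping you supply (identifying $A'=E$, $\mathbf M_I(A)=\{P_{i,j}\}$, and $\Rad(IA)=\prod_{i,j}P_{i,j}$) is exactly what the paper leaves implicit in its phrase ``the conclusions follow from this, together with Proposition~\ref{prin.reduction.lemma}.''
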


\begin{proof}
If  $n$ $=$ $1$, then $IR'$ $=$ $(\Rad(IR'))^{e_1}$  and
$R'$  is a Dedekind domain, so
the conclusion follows
from Proposition~\ref{prin.reduction.lemma}.

If  $n$ $>$ $1$, then   by  hypothesis there are exactly $n$
distinct maximal ideals $M_1,\ldots,M_n$  in  $R'$ that contain
$IR'$ and  $IR'$ $=$ ${M_1}^{e_1} \cdots {M_n}^{e_n}$. Also, $R'$ is
a Dedekind domain, so by Theorem~\ref{induct} there exists a finite
separable integral extension Dedekind domain  $E$ of $R'$ such that
$IE$ $=$ $(\Rad(IE))^{e_1 \cdots e_n}$.  Then $E$ is the integral
closure of  $R$  in the quotient field of  $E$; the conclusions
follow from this, together with
Proposition~\ref{prin.reduction.lemma}.
\end{proof}

An additional way to compose realizable consistent systems to
obtain a Dedekind domain  $E$  as in Theorem~\ref{induct} is
discussed in \cite[(2.11.2)]{HRR}. We consider
\cite[(2.11.2)]{HRR} again here in Proposition~\ref{inductnew}
because we want to add an observation on the exponent $e_1 \cdots e_n$.
It gives
a different inductive way to prove
Theorem \ref{induct}
when the
exponents $e_1,\ldots,e_n$  have no common integer prime
divisors
and replaces the exponent and degree $e_1 \cdots e_n$
in Theorem \ref{induct}
with a smaller exponent and degree $d$.
This also gives corresponding different versions of
Corollaries \ref{coroK2} and \ref{maincoro}.
In case the
exponents $e_1,\ldots,e_n$ do have common integer prime
divisors, see
Remark
\ref{analogous2}.

\begin{prop}
\label{inductnew}
With the notation of (\ref{appmax}) and (\ref{Dedekind}), assume
that  $n$ $>$ $1$  and
that no prime integer divides each  $e_i$.  Let
$d$ $=$ ${p_1}^{m_1} \cdots {p_k}^{m_k}$  be
the least common multiple of  $e_1,\ldots,e_n$,
where $p_1,\ldots,p_k$  are distinct prime integers
and  $m_1,\ldots,m_k$  are positive integers.
Then the system  $\mathbf S $
$=$ $\{\mathbf S(M_1),\ldots,\mathbf S(M_1)\}$
for  $\mathbf M_I(D)$,
where, for  $i$ $=$ $1,\ldots,n$,
$\mathbf S(M_i)$ $=$
$\{(K_{i,j},1,\frac{d}{e_i}) \mid j = 1,\ldots,e_i\}$,
is a realizable $d$-consistent system for  $\mathbf M_I(D)$.
Also, if  $E$  is the integral closure of  $D$
in a realization
$L$  of  $\mathbf S$  for  $\mathbf M_I(D)$, then
$IE$ $=$ $(\Rad(IE))^d$.
\end{prop}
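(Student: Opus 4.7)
The plan is to induct on $k$, the number of distinct prime divisors of $d$, composing two realizable consistent systems at each stage in the spirit of the proof of Theorem~\ref{induct}. First note that $\mathbf{S}$ is $d$-consistent since for each $i$, $\sum_{j=1}^{e_i} 1 \cdot (d/e_i) = d$. For the base case $k = 1$, $d = p_1^{m_1}$ and every $e_i$ is a power of $p_1$; the hypothesis that no prime divides every $e_i$ forces some $e_{i_0} = 1$, so $|\mathbf{S}(M_{i_0})| = 1$ and Theorem~\ref{GK}(i) applies directly.

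For the inductive step, fix $k \geq 2$ and factor $d = d' \cdot p_k^{m_k}$ with $d' = p_1^{m_1} \cdots p_{k-1}^{m_{k-1}}$. Decompose each $e_i = e_i' e_i''$, where $e_i'$ is the largest divisor of $e_i$ coprime to $p_k$ and $e_i''$ is the $p_k$-part of $e_i$. Any prime dividing every $e_i'$ would be one of $p_1,\ldots,p_{k-1}$ and would then divide every $e_i$, contradicting the hypothesis; thus by induction, the system $\mathbf{S}_1$ given by $\mathbf{S}_1(M_i) = \{(D/M_i, 1, d'/e_i') \mid j = 1,\ldots,e_i'\}$ is a realizable $d'$-consistent system. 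Let $E_1$ be the integral closure of $D$ in a realization $L_1$; by Remark~\ref{Dedekind}, each $M_i$ has exactly $e_i'$ extensions $Q_{i,1},\ldots,Q_{i,e_i'}$ to $E_1$, each satisfying $E_1/Q_{i,j} \cong D/M_i$ and $M_i (E_1)_{Q_{i,j}} = Q_{i,j}^{d'/e_i'} (E_1)_{Q_{i,j}}$. Next, define the $p_k^{m_k}$-consistent system $\mathbf{S}_2$ on $\mathbf{M}_I(E_1)$ by $\mathbf{S}_2(Q_{i,j}) = \{(E_1/Q_{i,j}, 1, p_k^{m_k}/e_i'') \mid j' = 1,\ldots,e_i''\}$; since $\gcd(e_1,\ldots,e_n) = 1$ forces some $e_{i_0}$ coprime to $p_k$, we have $e_{i_0}'' = 1$, so Theorem~\ref{GK}(i) yields a realization $L$ of $\mathbf{S}_2$ over $L_1$. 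Letting $E$ be the integral closure of $D$ in $L$, each $Q_{i,j}$ splits in $E$ into $e_i''$ primes $N_{i,j,j'}$ with residue field $D/M_i$ and $Q_{i,j} E_{N_{i,j,j'}} = N_{i,j,j'}^{p_k^{m_k}/e_i''} E_{N_{i,j,j'}}$. Composing, $M_i$ acquires exactly $e_i' e_i'' = e_i$ extensions in $E$, each with residue field $D/M_i$ and ramification $(d'/e_i')(p_k^{m_k}/e_i'') = d/e_i$, so $L$ realizes $\mathbf{S}$.

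The main subtle point is verifying that the gcd hypothesis propagates correctly to both subsystems: it holds for $\{e_i'\}$ because any prime dividing every $e_i'$ would divide every $e_i$, and it holds automatically for $\{e_i''\}$ because the original gcd condition supplies an $e_{i_0}$ with $e_{i_0}'' = 1$, which is the base-case trigger for Theorem~\ref{GK}(i). Once realizability is established, the identity $IE = (\Rad(IE))^d$ is immediate: $M_i E = \prod_{j} N_{i,j}^{d/e_i}$ gives $M_i^{e_i} E = \prod_j N_{i,j}^{d}$, and multiplying over $i$ yields $IE = \bigl(\prod_{i,j} N_{i,j}\bigr)^d = (\Rad(IE))^d$.
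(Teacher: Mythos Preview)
Your proof is correct and follows essentially the same approach as the paper's: both arguments process one prime factor of $d$ at a time, using the hypothesis that no prime divides every $e_i$ to guarantee at each stage that some index has trivial $p_h$-part, so Theorem~\ref{GK}(i) applies. The paper writes this out as an explicit chain $D=E_{(0)}\subset E_1\subset\cdots\subset E_k=E$ with a ${p_h}^{m_h}$-consistent system at stage $h$, while you package the first $k-1$ stages into a single appeal to the inductive hypothesis and then perform the $p_k$-step directly; unwinding your induction reproduces the paper's chain.
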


\begin{proof}
The proof is similar to the proof of
\cite[(2.11.1)]{HRR}.
There exists a chain of rings
$$(*) \quad D = E_{(0)} \subset E_1 \subset \cdots \subset E_k = E,$$
where each  $E_h$
($h$ $=$ $1,\ldots,k$)  is the integral closure  of
$E_{h-1}$  in a realization  $L_h$  of a realizable
${p_h}^{m_h}$-consistent
system   ${\mathbf S_h}$  for  $\mathbf M_I(E_{h-1})$.
To describe the consistent systems used to obtain
these rings  $E_h$ we first need the factorizations
of each  $e_i$.  So, for  $i$ $=$ $1,\ldots,n$
let  $e_i$ $=$ ${p_1}^{c_{i,1}} \cdots {p_k}^{c_{i,k}}$,
so  $0$ $\le$ $c_{i,j}$ $\le$ $m_j$  for  $j$ $=$
$1,\ldots,k$, since  $d$ $=$ ${p_1}^{m_1} \cdots {p_k}^{m_k}$.
With this notation, it will now be shown that,
for  $h$ $=$ $1,\ldots,k$, $E_h$
has,  for  $i$ $=$ $1,\ldots,n$,  exactly
$t_{h,i}$ $=$ ${p_1}^{c_{i,1}} \cdots {p_h}^{c_{i,h}}$
maximal ideals $P_{i,1},\ldots,P_{i,t_{h,i}}$
that lie over  $M_i$ and, for  $j$ $=$ $1,\ldots,t_{h,i}$,
$E_h/P_{i,j}$ $\cong$ $D/M_i$
and  $M_i(E_h)_{P_{i,j}}$
$=$ ${P_{i,j}}^{r_{h,i}} (E_h)_{P_{i,j}}$, where
$r_{h,i}$ $=$ ${p_1}^{m_1-c_{i,1}} \cdots {p_h}^{m_h-c_{i,h}}$.

For the first step, let
$e_i$ = ${p_1}^{c_{i,1}}d_{i,1}$ with $p_1 \not|~ d_{i,1}$,
so  $0 \leq c_{i,1}$ $\le$ $m_1$  for each $i$.
It may be assumed
 that $c_{1,1} \geq c_{2,1} \geq \cdots \geq c_{n,1}$
(so  $c_{1,1}$ $=$ $m_1$  and  $c_{n,1}$ $=$ $0$
(by the hypothesis that no prime divides all  $e_i$)), and let
${\mathbf S_1}$ $=$ $\{{{\mathbf S_1}}(M_1),\ldots,{{\mathbf S_1}}(M_n)\}$,
where   ${{\mathbf S_1}}(M_i)$ $=$
$\{ (K_{i,j},1,{p_1}^{m_1-c_{i,1}}) \mid j = 1,\ldots,s_i = {p_1}^{c_{i,1}}\}$
for  $i$ $=$ $1,\ldots,n$.
Then  ${\mathbf S_1}$  is
a ${p_1}^{m_1}$-consistent
system for  $\mathbf M_I(D)$,
and since  $c_{n,1}$ $=$ $0$, it is realizable
for  $\mathbf M_I(D)$.
Let  $E_1$  be the integral closure
of  $D$  in a realization $L_1$  of  ${\mathbf S_1}$
for $\mathbf M_I(D)$.  Then by Remark~\ref{Dedekind}.2,
  $IE_1$ $=$
$\prod_{i=1}^n ({M_i}^{e_i}E_1)$ =
$$(**) \quad \prod_{i=1}^n (
{N_{i,1}}^{e_ie_{i,1}} \cdots
{N_{i,s_i}}^{e_ie_{i,s_i}}) =
\prod_{i=1}^n (
{N_{i,1}}^{({p_1}^{c_{i,1}}d_{i,1})({p_1}^{m_1-c_{i,1}})} \cdots
{N_{i,s_i}}^{({p_1}^{c_{i,1}}d_{i,1})({p_1}^{m_1-c_{i,1}})}) =
{J_1}^{{p_1}^{m_1}},$$ where
${J_1}$ = $\prod_{i=1}^n ( {N_{i,1}}^{d_{i,1}} \cdots {N_{i,s_i}}^{d_{i,1}})$,
and $\prod_{i=1}^n {d_{i,1}}^{s_i}$ $=$ $\prod_{i=1}^n
{d_{i,1}}^{{p_1}^{c_{i,1}}}$ has $p_2,\ldots,p_k$  as its
prime integer factors.

Assume that  $h$ $>$ $1$  and that  $E_{h-1}$  has been
constructed to have the properties in the second
preceding paragraph, so, in particular,
for  $i$ $=$ $1,\ldots,n$,  $\mathbf M_I(E_{h-1})$
has  exactly $t_{h-1,i}$ maximal ideals $P_{i,1},\ldots,P_{i,t_{h-1,i}}$
that lie over  $M_i$  and, for  $j$ $=$ $1,\ldots,t_{h-1,i}$,
$E_{h-1}/P_{i,j}$ $\cong$ $D/M_i$
and  $M_i(E_{h-1})_{P_{i,j}}$ $=$ ${P_{i,j}}^{r_{h-1,i}}(E_{h-1})_{P_{i,j}}$.

To get  $E_{h}$  from  $E_{h-1}$, let  ${\mathbf S_h}$
$=$ $\{{{\mathbf S_h}}(P_{1,1}),\ldots,{{\mathbf S_h}}(P_{n,t_{h-1,n}})\}$,
where $${{\mathbf S_h}}(P_{i,j}) =
\{(K_{i,j,l},1,{p_h}^{m_h-c_{i,h}}) \mid l =
1,\ldots,{p_h}^{c_{i,h}}\}~~for~ all ~~i,j. $$ Then it is readily checked that
${\mathbf S_h}$ is a
${p_h}^{m_h}$-consistent system for  $\mathbf
M_I(E_{h-1})$, and it is realizable
for  $\mathbf M_I(E_{h-1})$,
by Theorem~\ref{GK}(i). It then
follows from the prescription of
${\mathbf S_h}$  that the integral
closure  $E_{h}$  of $E_{h-1}$  in a realization  $L_h$  of
${\mathbf S_h}$  for
$\mathbf M_I(E_{h-1})$ has the properties of
$E_h$  in the third  preceding paragraph.

It therefore follows that
$[L:F]$ $=$ ${p_1}^{m_1} \cdots {p_k}^{m_k}$ $=$ $d$,
where  $L$  (resp. $F$)  is the quotient field of  $E$
$=$ $E_k$ (resp., $D$ $=$ $E_{(0)}$) and that  $E$  is a realization
of the system  $\mathbf S$  for  $\mathbf M_I(D)$
(with  $\mathbf S$  as in the statement of this theorem),
so  $\mathbf S$  is a realizable  $d$-consistent
system for  $\mathbf M_I(D)$.
Finally, it follows from (**), applied in
each of the  $k$  steps, that  $IE$
$=$ $(\Rad(IE))^d$.
\end{proof}

\begin{rema}
\label{analogous2}
{\em
Concerning the hypothesis in Proposition~\ref{inductnew}
that no prime integer divides all  $e_i$, if, on the
contrary, $\pi$  is a prime integer that divides each
$e_i$, then let  $c$  be the greatest common divisor
of  $e_1,\ldots,e_n$.  For  $i$ $=$ $1,\ldots,n$
define  $k_i$  by  $e_i$ $=$ $ck_i$, and let  $I_0$ $=$
${M_1}^{k_1} \cdots {M_n}^{k_n}$, so
${I_0}^{c}$
= $(\prod_{i=1}^n M_i^{k_i})^c$
= $\prod_{i=1}^n M_i^{e_i}$
$=$ $I$  and no prime integer divides all  $k_i$.
Therefore, if the ring  $E$  of Theorem~\ref{induct}
is constructed for  $I_0$ in place of  $I$, then  $I_0E$
$=$ $(\Rad(I_0E))^d$, where  $d$  is the least
common multiple of  $k_1, \ldots, k_n$,
so  $IE$ $=$ $(\Rad(IE))^{dc}$.
}
\end{rema}

Theorem~\ref{induct} shows that there exist finite separable
integral extension domains  $E$ of  $D$  such that  $IE$  is a
radical-power ideal.  Proposition~\ref{anotherversion2}
characterizes the conditions a realizable $m$-consistent system $S'$
for  $\mathbf M_I(D)$ must satisfy in order that  $IE$ $=$ $J^t$ for
some radical ideal  $J$ in  $E$ and for some positive integer $t$.

\begin{prop}
\label{anotherversion2}
Let $D$ be a Dedekind domain with
quotient field $F \neq D$,
let $M_1,\ldots,M_n$ ($n$ $>$ $1$)  be distinct maximal
ideals of $D$,
let $I$ = ${M_1}^{e_1} \cdots {M_n}^{e_n}$ be an
ideal in $D$, where $e_1,\ldots,e_n$ are positive integers,
and let $m$ be a positive integer.
Let $S'$ $=$
$\{S'(M_1),\ldots,S'(M_n)\}$ be a realizable $m$-consistent system for
$\{ D_{M_1},\ldots,D_{M_n} \}$,
where $S'(M_i)$ =
$\{ (K_{i,j}, f_{i,j},e_{i,j}) \mid j = 1, \ldots, s_i \}$
for  $i$ $=$ $1,\ldots,n$, and let $E$ be the
integral closure of $D$ in a finite separable
field extension $L$  of $F$
which realizes $S'$ for
$\{ D_{M_1},\ldots,D_{M_n} \}$, so $[L:F]$ $=$ $m$.
Then the following hold:

\noindent
{\bf{(\ref{anotherversion2}.1)}}
$IE$ $=$ $J^t$  for some radical ideal  $J$  in  $E$  and
for some positive integer  $t$  if and only if  the products $e_ie_{i,j}$
are equal for all $i,j$, and then  $J$ $=$ $\Rad(IE)$  and  $e_ie_{i,j}$ $=$ $t$.

\noindent
{\bf{(\ref{anotherversion2}.2)}}
If  $IE$ $=$ $J^m$ (as in Theorem \ref{induct} and Proposition \ref{inductnew}),
then  $\sum_{j=1}^{s_i} f_{i,j}$
$=$ $e_i$ for $i$ $=$ $1,\ldots,n$.

\noindent
{\bf{(\ref{anotherversion2}.3)}}
If  $IE$ $=$ $J^t$, as in (\ref{anotherversion2}.1),
and if no prime integer divides each
$e_i$, then $m$  is a positive multiple of  $t$
and  $t$  (and hence  $m$)  is a positive multiple
of each  $e_i$.
\end{prop}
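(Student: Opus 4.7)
The plan is to reduce everything to unique factorization of ideals in the Dedekind domain $E$, using the explicit factorization of $IE$ provided by Remark~\ref{Dedekind}.2, and then to exploit Definition~\ref{consist}(2) to extract the arithmetic relations. Let $P_{i,j} = N_{i,j} \cap E$ be the maximal ideals of $E$ corresponding to the realization, so that by Remark~\ref{Dedekind}.2,
\[
IE \;=\; \prod_{i=1}^n \prod_{j=1}^{s_i} P_{i,j}^{\,e_i e_{i,j}}.
\]

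For (\ref{anotherversion2}.1), I would argue by unique factorization. Suppose $IE = J^t$ with $J$ radical. Since $E$ is a Dedekind domain (Remark~\ref{Dedekind}.2), $J$ is a squarefree product of distinct maximal ideals, and these must come from the set of $P_{i,j}$ actually appearing in $IE$. Comparing the factorization of $J^t$ with the factorization above then forces every exponent $e_i e_{i,j}$ to equal $t$, and $J = \prod_{i,j} P_{i,j} = \Rad(IE)$. Conversely, if $e_i e_{i,j} = t$ for all $i,j$, then $IE = \bigl(\prod_{i,j} P_{i,j}\bigr)^t = (\Rad(IE))^t$, giving the characterization.

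For (\ref{anotherversion2}.2), I simply apply (\ref{anotherversion2}.1) with $t = m$: the equality $IE = J^m$ yields $e_i e_{i,j} = m$ for all $i,j$, so $e_{i,j} = m/e_i$. Substituting this into condition~(2) of Definition~\ref{consist}, namely $\sum_{j=1}^{s_i} e_{i,j} f_{i,j} = m$, gives $(m/e_i)\sum_{j=1}^{s_i} f_{i,j} = m$, hence $\sum_{j=1}^{s_i} f_{i,j} = e_i$.

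For (\ref{anotherversion2}.3), the same substitution $e_{i,j} = t/e_i$ (which requires $e_i \mid t$, already delivered by part~(1)) into $\sum_j e_{i,j} f_{i,j} = m$ yields
\[
t \sum_{j=1}^{s_i} f_{i,j} \;=\; m e_i \quad \text{for each } i = 1, \ldots, n,
\]
so $t \mid m e_i$ for every $i$. The only slightly subtle step is extracting $t \mid m$: for any prime $p$ dividing $t$, the hypothesis that no prime divides every $e_i$ allows me to pick an index $i_0$ with $p \nmid e_{i_0}$; then $v_p(t) \le v_p(m e_{i_0}) = v_p(m)$. Running over all primes $p$ gives $t \mid m$, completing the proposition. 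I do not anticipate a serious obstacle here; the main care is in setting up the bookkeeping so that the unique factorization argument in part~(1) is clearly valid and that the prime-by-prime extraction in part~(3) makes explicit use of the coprimality hypothesis.
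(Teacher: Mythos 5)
Your proposal is correct and follows essentially the same route as the paper: unique factorization of $IE$ in the Dedekind domain $E$ via the factorization in Remark~\ref{Dedekind}.2 for part (1), then substitution of $e_{i,j}=m/e_i$ (resp.\ $t/e_i$) into $\sum_{j} e_{i,j}f_{i,j}=m$ for parts (2) and (3). Your prime-by-prime valuation argument merely makes explicit the divisibility step the paper states directly from the coprimality of the $e_i$.
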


\begin{proof}
For (\ref{anotherversion2}.1),
it is clear that if
$IE$ $=$ $J^t$  for some radical ideal  $J$  in  $E$, then
$J$ $=$ $\Rad(IE)$.  Therefore
let $J$ = $\Rad(IE)$ = $P_1 \cdots P_k$,
for distinct prime ideals $P_1,\ldots,P_k$
of  $E$.  Then by
Remark \ref{Dedekind}, $IE$ = $\prod_{i=1}^n
({P_{i,1}}^{e_ie_{i,1}} \cdots {P_{i,s_i}}^{e_ie_{i,s_i}})$.
Thus by
uniqueness of primary decompositions in a Dedekind domain,
it follows that
$J^t$ = $IE$ if and only if
$t$ $=$ $e_i e_{i,j}$
for each $i$ and $j$,
hence  (\ref{anotherversion2}.1) holds.

For (\ref{anotherversion2}.2),
by (2) in the definition of a consistent system
we have $m$ $=$ $\sum_{j=1}^{s_i} e_{i,j}f_{i,j}$
for  $i$ $=$ $1,\ldots,n$.  Therefore if
(\ref{anotherversion2}.1) holds and if
$t$ $=$ $m$ ($=$ $[L:F]$), then  $m$
$=$ $t$ $=$ $e_ie_{i,j}$  for all  $i,j$,
so  $e_{i,j}$ = $\frac{m}{e_i}$ for
$i$ $=$ $1,\ldots,n$
and  $j$ $=$ $1,\ldots,s_i$.
Substituting
$\frac{m}{e_i}$ for $e_{i,j}$
and multiplying by $e_i$
we get $m e_i$
$=$ $m \sum_{j=1}^{s_i} f_{i,j}$
for each $i$, so the conclusion follows
by cancelling  $m$.

For (\ref{anotherversion2}.3),
if (\ref{anotherversion2}.1) holds,
then as in the proof of
(\ref{anotherversion2}.2) we have
$e_{i,j}$ $=$ $\frac{t}{e_i}$  for
all  $i,j$
and  $m$ $=$ $\sum_{j=1}^{s_i} e_{i,j}f_{i,j}$
for all  $i$.  Substituting
$\frac{t}{e_i}$ for $e_{i,j}$
and multiplying by $e_i$
we get $m e_i$
$=$ $t \sum_{j=1}^{s_i} f_{i,j}$
for each $i$.  Since no prime divides each  $e_i$,
we get $m$ = $tm'$
for some $m' \in \mathbb{N_+}$.
Therefore, since  $t$ $=$ $e_ie_{i,j}$
for all  $i,j$, $t$  and  $m$  are positive
multiples of each  $e_i$.
\end{proof}

\section{Finite-residue-field degree analogs.}
Under the assumption that each of the residue fields  $D/M_i$  is
finite, the results in this section are ``finite-residue-field degree'' analogs of the
results in Section~3.  Theorem \ref{x1} is the main result in this
section; it is a finite-residue-field degree analog of Theorem \ref{induct}.

\begin{theo}
\label{x1} With the notation of (\ref{appmax}) and (\ref{Dedekind}),
assume that  $n$ $>$ $1$  and that each  $K_i$ $=$ $D/M_i$  is
finite.  For  $i$ $=$ $1,\ldots,n$  let $f_i$  be a positive integer
such that $[K_i : F_i]$ = $f_i$  for some subfield  $F_i$  of  $K_i$,
and let  ${K_{i}}'$  $\subseteq$ $\overline{K_i}$
be the unique  extension field
of  $K_i$ of degree  $\frac{f_1 \cdots f_n}{f_i}$, where
$\overline{K_i}$ is a fixed algebraic closure of  $K_i$.  Then
the system  $T$ $=$ $\{T(M_1),\ldots,T(M_n)\}$ is a realizable $ m$-consistent
system for $\mathbf M_I(D)$,
where $m$ $=$ $f_1 \cdots f_n$  and  $T(M_i)$ $=$
$\{(K_{i,j},\frac{f_1 \cdots f_n}{f_i}, 1) \mid j = 1,\ldots,f_i\}$
for  $i$ $=$ $1,\ldots,n$  (with  $K_{i,j}$ $=$ ${K_i}'$  for
$j$ $=$ $1,\ldots,f_i$).  Therefore there exists a
Dedekind domain  $E$  that is a finite separable integral
extension domain  of  $D$ such that  $[L:F]$ $=$ $m$
(where  $L$ (resp., $F$)  is the quotient field of  $E$  (resp.,
$D$)) and, for  $i$ $=$ $1,\ldots,n$, there exist exactly  $f_i$
maximal ideals  $N_{i,1},\ldots,N_{i,f_i}$ in  $E$  that lie over
$M_i$  and, for  $j$ $=$ $1,\ldots,f_i$, $M_iE_{N_{i,j}}$ $=$
$N_{i,j}E_{N_{i,j}}$  and $[(E/N_{i,j}) : K_i]$
 $=$ $\frac{f_1 \cdots f_n}{f_i}$,
 so  $[(E/N_{i,j}) : F_i]$ $=$ $m$.
\end{theo}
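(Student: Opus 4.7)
The plan is to mimic the two-step composition used in the proof of Theorem~\ref{induct}, but now allocating the degree $m = f_1 \cdots f_n$ into \emph{residue degrees} rather than ramification indices. The trivial case $f_i = 1$ for some $i$ is dispatched first: then $s_i = 1$ in $T(M_i)$, so $T$ is already realizable directly by Theorem~\ref{GK}(i). So we assume each $f_i > 1$.

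For the first step, define the $f_1 \cdots f_{n-1}$-consistent system $T_1 = \{T_1(M_1),\ldots,T_1(M_n)\}$ for $\mathbf M_I(D)$ by $T_1(M_i) = \{(K_{i,j}^{(1)}, \frac{f_1\cdots f_{n-1}}{f_i}, 1) \mid j = 1,\ldots,f_i\}$ for $i < n$, where $K_{i,j}^{(1)} \subseteq \overline{K_i}$ is the unique extension of $K_i$ of degree $\frac{f_1\cdots f_{n-1}}{f_i}$ (exists and is unique by Remark~\ref{preques}.1), and $T_1(M_n) = \{(H_n, f_1\cdots f_{n-1}, 1)\}$ with $H_n$ the unique extension of $K_n$ of degree $f_1\cdots f_{n-1}$. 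Since $T_1(M_n)$ has $s_n = 1$, Theorem~\ref{GK}(i) shows $T_1$ is realizable; let $L_1$ realize $T_1$, and let $E_1$ be the integral closure of $D$ in $L_1$. By Remark~\ref{Dedekind}, $[L_1:F] = f_1\cdots f_{n-1}$, $E_1$ is a Dedekind domain with $f_i$ maximal ideals $Q_{i,1},\ldots,Q_{i,f_i}$ over $M_i$ (for $i < n$), unramified with residue fields $K_{i,j}^{(1)}$, and exactly one maximal ideal $Q_{n,1}$ over $M_n$, unramified with residue field $H_n$.

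For the second step, define the $f_n$-consistent system $T_2$ for $\mathbf M_I(E_1)$ by $T_2(Q_{i,j}) = \{({K_i}', f_n, 1)\}$ for $i < n$ and $j = 1,\ldots,f_i$ (using that ${K_i}'$ is the unique degree-$f_n$ extension of $K_{i,j}^{(1)}$, since $[{K_i}' : K_i] = \frac{f_1\cdots f_n}{f_i} = \frac{f_1\cdots f_{n-1}}{f_i}\cdot f_n$ and fields are finite), and $T_2(Q_{n,1}) = \{(H_n, 1, 1) \mid k = 1,\ldots,f_n\}$. Each piece of $T_2$ has $s = 1$ \emph{or} trivial $(f,e) = (1,1)$, and the overall system is $f_n$-consistent; realizability follows from Theorem~\ref{GK}(i) applied to any $T_2(Q_{i,j})$ with $i < n$ (which has $s = 1$). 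Let $L$ realize $T_2$ over $L_1$ and let $E$ be the integral closure of $E_1$ in $L$; then $E$ is Dedekind, $[L:F] = f_n \cdot f_1\cdots f_{n-1} = m$, and $E$ is a finite separable integral extension of $D$.

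It remains to verify that $L$ realizes $T$ for $\mathbf M_I(D)$. For $i < n$, each $Q_{i,j}$ has a unique extension $N_{i,j}$ in $E$ with $E/N_{i,j} \cong {K_i}'$ and $Q_{i,j}E_{N_{i,j}} = N_{i,j}E_{N_{i,j}}$; composing with the first step gives $f_i$ maximal ideals over $M_i$, each unramified with residue field of degree $\frac{f_1\cdots f_n}{f_i}$ over $K_i$. For $i = n$, the single $Q_{n,1}$ splits into $f_n$ maximal ideals $N_{n,1},\ldots,N_{n,f_n}$, each unramified with residue field $H_n$, which has degree $f_1\cdots f_{n-1} = \frac{f_1\cdots f_n}{f_n}$ over $K_n$. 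Multiplicativity of the residue degree over the tower $F_i \subseteq K_i \subseteq E/N_{i,j}$ finally yields $[(E/N_{i,j}) : F_i] = \frac{f_1\cdots f_n}{f_i}\cdot f_i = m$. The only delicate point is ensuring that residue-field extensions match up at each stage; this is precisely where the hypothesis that each $K_i$ is finite (hence Remark~\ref{preques}.1 applies) is used to guarantee that the required extensions exist, are unique inside $\overline{K_i}$, and are simple.
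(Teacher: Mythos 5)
Your proposal is correct and follows essentially the same route as the paper's own proof: the trivial case $f_i=1$ via Theorem~\ref{GK}(i), then composing a realizable $f_1\cdots f_{n-1}$-consistent system (with $M_n$ inert of full residue degree) with a realizable $f_n$-consistent system over $E_1$ (the other primes inert of degree $f_n$, $Q_{n,1}$ splitting completely), using Remark~\ref{preques}.1 to identify the intermediate residue fields. Your $K_{i,j}^{(1)}$ and $H_n$ are exactly the paper's $H_{i,j}$ and $K_{n,1}$, so there is nothing substantively different to flag.
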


\begin{proof}
The proof is similar to the proof of Theorem~\ref{induct}.
Specifically, if $f_h$ $=$ $1$ for some  $h$ $=$ $1,\ldots,n$, then
$T(M_h)$  has  $s_h$ $=$ $f_h$ $=$ $1$, so the system  $T$ is a
realizable $f_1 \cdots f_n$-consistent system for $\mathbf M_I(D)$,
by Theorem \ref{GK}(i), and the integral closure $E$  of  $D$  in a
realization $L$ of $T$  for $\mathbf M_I(D)$ has the desired
properties, so it may be assumed that $f_i$ $>$ $1$ for all  $i$.
Then the desired ring $E$ is obtained by composing the following two
systems  $T_1$ (to get the Dedekind domain $E_1$ from  $D$) and
$T_2$ (to get the Dedekind domain  $E$ from $E_1$). Here, $T_1$ $=$
$\{{T_1(M_1)},\ldots,{T_1(M_n)}\}$, where ${T_1(M_n)}$ $=$
$\{(K_{n,1},f_1 \cdots f_{n-1},1)\}$ and for $i$ $=$ $1,\ldots,n-1$,
${T_1(M_i)}$ $=$ $\{(H_{i,j},\frac{f_1 \cdots f_{n-1}}{f_i},1) \mid
j = 1,\ldots,f_i\}$ (with  $K_i$ $\subseteq$ $H_{i,j}$ $\subseteq$
$K_{i,j}$;  such  $H_{i,j}$ exist, by  Remark~\ref{preques}.1 , so
$[K_{i,j}:H_{i,j}]$ $=$ $f_n$). It follows from Theorem~\ref{GK}(i)
that  $T_1$  is a realizable $f_1 \cdots f_{n-1}$-consistent system
for  $\mathbf M_I(D)$  and that there are  exactly  $m'$ $=$ $f_1 +
\cdots + f_{n-1}+1$ ideals $Q_{1,1},\ldots,Q_{n-1,f_{n-1}},Q_{n,1}$
in  $\mathbf M_I(E_1)$, where  $E_1$ is the integral closure of  $D$
in a realization $L_1$  of  $T_1$ for $\mathbf M_I(D)$. Therefore
let  $T_2$ $=$
$\{T_2(Q_{1,1}),\ldots,T_2(Q_{1,f_1}),\ldots,T_2(Q_{n-1,1}),\ldots,T_2(Q_{n-1,f_{n-1}}),T_2(Q_{n,1})\}$,
where $T_2(Q_{n,1})$ $=$ $\{(K_{n,j},1,1) \mid j = 1,\ldots,f_n\}$,
and for all other  $(i,j)$, $T_2(Q_{i,j})$ $=$
$\{(K_{i,j},f_n,1)\}$. (Note that, by hypothesis, $E_1/Q_{n,1}$
$\cong$ $K_{n,1}$ $=$ $\cdots$ $=$ $K_{n,f_n}$.)  It follows that
$T_2$  is a  $f_n$-consistent system for  $\mathbf M_I(E_1)$, and it
is realizable for  $\mathbf M_I(E_1)$, by Theorem~\ref{GK}(i).  Let
$E$  be the integral closure of  $E_1$  in a realization of  $T_2$
for $\mathbf M_I(E_1)$.  Then the  $E/N_{n,j}$  are
$E_1/Q_{n,1}$-isomorphic to  $K_{n,j}$  and $E_1/Q_{n,1}$
$\supseteq$ $K_n$, so the $E/N_{n,j}$  are  $K_n$-isomorphic to
$K_{n,j}$ $=$ $K_{n,1}$ $=$ ${K_n}'$.  Also, by construction, for
$i$ $=$ $1,\ldots,n-1$  and  $j$ $=$ $1,\ldots,f_i$, $E/N_{i,j}$  is
$E_1/Q_{i,j}$-isomorphic to  $K_{i,j}$  and  $E/Q_{i,j}$ $\supseteq$
$K_i$, so  $E/N_{i,j}$  is  $K_i$-isomorphic to  $K_{i,j}$. Further,
the  $K_{i,j}$  are finite and contain  $K_i$, so they are simple
extensions of  $K_i$.  Therefore it follows as in the third
paragraph of the proof of Theorem~\ref{induct} that a realization
$L$ of $T_2$  for  $\mathbf M_I(E_1)$  is, in fact, a realization of
$T$ for  $\mathbf M_I(D)$ (with  $T$  as in the statement of this
theorem), so  $T$  is a a realizable $f_1 \cdots f_n$-consistent
system for  $\mathbf M_I(D)$.

Finally, it follows from the prescription given by  $T$  that,
for  $i$ $=$ $1,\ldots,n$  and  $j$ $=$ $1,\ldots,f_i$,
$M_iE_{N_{i,j}}$ $=$ $N_{i,j}E_{N_{i,j}}$  and
$[(E/N_{i,j}) : K_i]$ $=$ $\frac{f_1 \cdots f_n}{f_i}$,
so  $[(E/N_{i,j}) : F_i]$ $=$ $f_1 \cdots f_n$.
\end{proof}

\begin{rema}
\label{nonfinite2}
{\bf{(\ref{nonfinite2}.1)}}
{\em
The hypothesis in Theorem \ref{x1} that each  $K_i$ $=$ $D_i/M_i$
is finite is often not essential.  Specifically, if the set of extension
fields of the  $K_i$  have the following properties (a) - (c), then it
follows from the proof of Theorem \ref{x1} that the
conclusion holds, even though the  $K_i$  are not finite:
(a)  For  $i$ $=$ $1,\ldots,n$, $K_i$  has a subfield  $F_i$  such
that  $[K_i:F_i]$ $=$ $f_i$.  (b)  With  $m$ $=$ $f_1 \cdots f_n$,
for  $i$ $=$ $1,\ldots,n$  $K_i$  has (not necessarily distinct)
simple extension fields  $K_{i,1},\ldots,K_{i,f_i}$
such that  $[K_{i,j}:K_i]$ $=$ $\frac{m}{f_i}$.
(c)  For  $i$ $=$ $1,\ldots,n-1$, $K_i$  has simple extension fields
$H_{i,j}$  such that  $[H_{i,j}:K_i]$ $=$ $\frac{f_1 \cdots f_{n-1}}{f_i}$
and such that  $H_{i,j}$ $\subseteq$ $K_{i,j}$  (so  $[K_{i,j}:H_{i,j}]$
$=$ $f_n$).

\noindent
{\bf{(\ref{nonfinite2}.2)}}
With the notation of Theorem \ref{x1}, let  $d$  be a common
multiple of  $f_1,\ldots,f_{n-1}$  and let  $d^*$ $=$ $df_n$.
(Thus, for example, if  $f_1$ $= \cdots =$ $f_{n-1}$, then
$d^*$ $=$ $f_1f_n$  is (depending on $f_1$  and  $n$)
potentially considerably smaller than  $f_1 \cdots f_n$.)
Then the following  $d^*$-consistent
system  $T^*$  is realizable for  $\mathbf M_I(D)$:
$T^*$ $=$ $\{T^*(M_1),\ldots,T^*(M_n)\}$  with
$T^*(M_i)$ $=$ $\{(K_{i,j},\frac{d^*}{f_i},1) \mid j = 1,\ldots,f_i\}$
for  $i$ $=$ $1,\ldots,n$  (with  $K_{i,j}$ $=$ ${{K_i}'}^*$  for
$j$ $=$ $1,\ldots,f_i$, where  ${{K_i}'}^*$ $\subseteq$ $\overline{K_i}$
is the unique extension field of  $K_i$  of degree  $d^*$).
Also, $[E^*/N_{i,j} : F_i]$ $=$ $d^*$  for all  $i,j$, where  $E^*$  is the integral
closure of  $D$  is a realization of  $T^*$  for  $\mathbf M_I(D)$.
}
\end{rema}

\begin{proof}
For (\ref{nonfinite2}.2), the
proof is the same as the proof of Theorem \ref{induct} by composing
the following two realizable consistent systems  ${T_1}^*,{T_2}^*$.
Here, ${T_1}^*$ $=$ $\{{T_1}^*(M_1),\ldots,{T_1}^*(M_n)\}$  with
${T_1}^*(M_i)$ $=$ $\{(H_{i,j},\frac{d^*}{f_if_n},1) \mid j = 1,\ldots,f_i\}$
for  $i$ $=$ $1,\ldots,n-1$  (with  $K_i$ $\subseteq$ ${H_{i,j}}$
$\subseteq$ $K_{i,j}$, so  $[K_{i,j}:H_{i,j}]$ $=$ $f_n$),  and
${T_1}^*(M_n)$ $=$ $\{(K_{n,1},\frac{d^*}{f_n},1)\}$,
so  ${T_1}^*$  is a realizable  $\frac{d^*}{f_n}$-consistent system for
$\mathbf M_I(D)$.  Let  ${E_1}^*$  be the integral closure
of  $D$  is a realization  ${L_1}^*$  of  ${T_1}^*$  for
$\mathbf M_I(D)$, and for  $i$ $=$ $1,\ldots,n$
let  $Q_{i,1},\ldots,Q_{i,f_i}$  be
the maximal ideals in  $E_1$  that lie over  $M_i$.
Let  ${T_2}^*$ $=$ $\{{T_2}^*(Q_{1,1}),\ldots,{T_2}^*(Q_{n-1,f_{n-1}}),{T_2}^*(Q_{n,1})\}$  with
${T_2}^*(Q_{i,j})$ $=$ $\{(K_{i,j},f_n,1)\}$  for  $i$ $=$ $1,\ldots,n-1$
and  $j$ $=$ $1,\ldots,f_i$, and
${T_2}^*(Q_{n,1})$ $=$ $\{(K_{n,j},1,1) \mid j = 1,\ldots,f_n\}$,
so  ${T_2}^*$  is a realizable  $f_n$-consistent system for  $\mathbf M_I(E_1)$.
\end{proof}

Corollary~\ref{coroK3}  is a special case of Theorem~\ref{x1}; it is
a finite-residue-field degree analog of Corollary~\ref{coroK2}.

\begin{coro}
\label{coroK3} Let $D$ be  the ring of integers of an algebraic number
field $F$ and let  $M_1,\ldots,M_n$ ($n$ $>$ $1$)  be distinct maximal
ideals in  $D$.  For  $i$ $=$ $1,\ldots,n$
let  $\mathbb Z/\pi_i \mathbb Z$  be the prime subfield of  $D/M_i$
(possibly  $\pi_i$ $=$ $\pi_j$  for some  $i$ $\ne$ $j$
$\in$ $\{1,\ldots,n\}$)
and let  $f_i$ $=$ $[(D/M_i):(\mathbb Z / \pi_i \mathbb Z)]$.
Then there exists a
Dedekind domain  $E$  that is a finite (separable)
integral extension domain of
$D$ such that,
for  $i$ $=$ $1,\ldots,n$, there exist exactly  $f_i$
maximal ideals  $p_{i,j}$  in  $E$  that
lie over  $M_i$, and then,
for  $j$ $=$ $1,\ldots,f_i$,
$M_iE_{p_{i,j}}$ $=$ $p_{i,j}E_{p_{i,j}}$
and  $[(E/p_{i,j}) : (\mathbb Z/\pi_i \mathbb Z)]$ $=$ $f_1 \cdots f_n$.
\end{coro}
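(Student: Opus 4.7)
The plan is to derive Corollary~\ref{coroK3} as an immediate specialization of Theorem~\ref{x1}, with the subfield $F_i$ of $K_i = D/M_i$ taken to be the prime subfield $\mathbb{Z}/\pi_i\mathbb{Z}$ for each $i = 1, \ldots, n$.

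First I would verify that the hypotheses of Theorem~\ref{x1} are met in the present setting. Since $D$ is the ring of integers of an algebraic number field, it is a Dedekind domain with quotient field $F$, so the notational framework of Notation~\ref{appmax} and Remark~\ref{Dedekind} applies. For each maximal ideal $M_i$ of $D$, the residue field $D/M_i$ is a finite extension of its prime subfield $\mathbb{Z}/\pi_i\mathbb{Z}$, so in particular each $K_i = D/M_i$ is finite. The assumption $n > 1$ is given, and the identity $[K_i : F_i] = f_i$ with $F_i = \mathbb{Z}/\pi_i\mathbb{Z}$ holds by the very definition of $f_i$ in the statement.

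Next I would invoke Theorem~\ref{x1} with this data. That theorem produces a Dedekind domain $E$, finite separable integral over $D$, with $[L : F] = f_1 \cdots f_n$ (where $L$ is the quotient field of $E$), such that for each $i$ there exist exactly $f_i$ maximal ideals $p_{i,1}, \ldots, p_{i,f_i}$ of $E$ lying over $M_i$; and for each such $p_{i,j}$ one has $M_i E_{p_{i,j}} = p_{i,j} E_{p_{i,j}}$ together with $[(E/p_{i,j}) : F_i] = f_1 \cdots f_n$. Substituting $F_i = \mathbb{Z}/\pi_i\mathbb{Z}$ in the last equality gives exactly $[(E/p_{i,j}) : (\mathbb{Z}/\pi_i\mathbb{Z})] = f_1 \cdots f_n$, which is the conclusion of the corollary.

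No substantive obstacle is anticipated: all of the compositional work---building $E$ from the two auxiliary consistent systems $T_1$ and $T_2$---has already been carried out in the proof of Theorem~\ref{x1}. The only content particular to the number-field setting is the observation that $\mathbb{Z}/\pi_i\mathbb{Z}$ is a subfield of $D/M_i$ of the required relative degree $f_i$, so the corollary reduces to the quoted theorem without further effort.
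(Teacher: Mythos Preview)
Your proposal is correct and matches the paper's approach exactly: the paper's entire proof is the single sentence ``This follows immediately from Theorem~\ref{x1},'' and your specialization with $F_i = \mathbb{Z}/\pi_i\mathbb{Z}$ is precisely the intended one.
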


\begin{proof}
This follows immediately from Theorem~\ref{x1}.
\end{proof}

Remark~\ref{alternate2}  corresponds to
Remark~\ref{alternate}.

\begin{rema}
\label{alternate2}
{\em
Let $D$ be a Dedekind domain with quotient field $F \neq D$, let
$M_1,\ldots,M_n$  be distinct maximal ideals of $D$,
and assume that  $D/M_i$  is finite for  $i$ $=$ $1,\ldots,n$.
For  $i$ $=$ $1,\ldots,n$ let  $f_i$
be a positive integer, and assume there exists
a subfield  $F_i$  of  $D/M_i$  such that
$[(D/M_i) : F_i]$ = $f_i$.
Let  $T$ = $\{ T(V_1), \ldots, T(V_n) \}$ be an
arbitrary $m$-consistent system for
$\mathbf M_I(D)$ = $\{ M_1 , \ldots, M_n\}$, where,
for  $i$ $=$ $1,\ldots,n$,
$T(V_i)$ = $\{ (K_{i,j}, f_{i,j},e_{i,j})
\mid j = 1, \ldots, s_i \}$.
If we consider the $s_i$, $K_{i,j}$, and $e_{i,j}$
as fixed in the $m$-consistent system for
$\mathbf M_I(D)$ and the $f_{i,j}$
as variables subject to the constraint
$\sum_{j=1}^{s_i} e_{i,j}f_{i,j}$ = $m$
for each $i$, then
$T$ gives a map $\mathbb{N_+}^n \to
\mathbb{N_+}^t$ (where $t$ = $\sum_{i=1}^n s_i$)
defined by $$(f_1, \ldots, f_n) \mapsto
(f_1f_{1,1}, \ldots, f_1f_{1,s_1}, \ldots,
f_nf_{n,1}, \ldots, f_nf_{n,s_n}),$$
and Theorem \ref{unrelated.dual}
shows that the equations
$\sum_{j=1}^{s_i} e_{i,j}f_{i,j}$ = $m$
are the only restrictions on the image
of the induced map
 $S$ : $\mathbb{N_+}^n \to
\mathbf{P}^t(\mathbb{N_+})$
= $ \mathbb{N_+}^t / \!\! \sim$,
where $(a_1, \ldots, a_t) \sim
(b_1, \ldots, b_t)$ if
$(a_1, \ldots, a_t)$ =
$(cb_1, \ldots, cb_t)$
for some $c \in \mathbb{Q}$.
{From} this point of view, if we want an equation
$[(E/Q_{i,j}):F_i]$ = $k$ for all  $i,j$,
for some finite separable integral extension
Dedekind domain  $E$  of  $D$  and for some
positive integer $k$,
then it is not necessary to  compose
two realizable consistent systems, as in the proof
of Theorem \ref{x1}.
Indeed, it suffices to observe that
we have an $m$-consistent
system $T$ $=$ $\{T(M_1),\ldots,T(M_n)\}$,
where  $m$ $=$ $f_1 \cdots f_n$  and  $T(M_i)$ $=$
$\{(K_{i,j},\frac{f_1 \cdots f_n}{f_i},1)
\mid j = 1,\ldots,f_i\}$  for  $i$ $=$ $1,\ldots,n$
(realizable or not), and then apply
Theorem \ref{unrelated.dual}.
}
\end{rema}

Corollary~\ref{xxyy} is a finite-residue-field degree analog of
Corollary~\ref{maincoro}. Since hypotheses on infinite
residue fields can sometimes be replaced by the hypotheses   that
the residue fields have cardinality greater than or equal to
a given positive integer, Corollary~\ref{xxyy} may be
useful in this regard.

\begin{coro}
\label{xxyy} Let  $R$  be a Noetherian domain of altitude one, let
$I$  be a nonzero proper ideal in  $R$, let  $R'$  be the integral
closure of  $R$  in its quotient field, let  $IR'$ $=$ ${M_1}^{e_1}
\cdots {M_n}^{e_n}$  ($n$ $>$ $1$) be a normal primary decomposition
of  $IR'$, and for $i$ $=$ $1,\ldots,n$  let $[(R'/M_i):(R/(M_i \cap
R))]$ = $g_i$. For $i$ $=$ $1,\ldots,n$ assume that  $R'/M_i$  is
finite, let $f_i$  be a positive
integer, and assume that $[(R/(M_i \cap R)) : F_i]$ = $f_i$, where
$F_i$ is a subfield of $R/(M_i \cap R)$.
Then there exists a finite separable integral extension domain  $A$
of $R$ such that, for all $P$ $\in$ $\mathbf M_I(A)$,  $[(A/P):F_i]$
$=$ $\Pi_{i=1}^n f_ig_i$ $=$ $[A_{(0)}:R_{(0)}]$. Also, $A$  may be chosen so that, with
$A'$ the integral closure of  $A$ in  $A_{(0)}$,
there exist exactly  $f_ig_i$  maximal ideals
$P_{i,j}$  in $A$  such that $P_{i,j}A' \cap R'$ $=$ $M_i$  and,
for all  $P$ $\in$ $\mathbf M_I(A)$
it holds that $PA'$ $\in$
$\mathbf M_I(A')$  and  $A/P$ $\cong$ $A'/(PA')$.
\end{coro}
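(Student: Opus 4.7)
The plan is to apply Theorem~\ref{x1} to the Dedekind domain $R'$ and then descend to a finite integral extension of $R$ using Proposition~\ref{prin.reduction.lemma}, mirroring the argument of Corollary~\ref{maincoro} but with Theorem~\ref{x1} in place of Theorem~\ref{induct}. Since $R$ has altitude one, $R'$ is a Dedekind domain with the same quotient field $F$ as $R$ and with the finite residue fields $R'/M_i$. The tower $F_i \subseteq R/(M_i\cap R) \subseteq R'/M_i$ has overall degree $f_ig_i$, so $F_i$ is a subfield of $R'/M_i$ of index $f_ig_i$; this lets me apply Theorem~\ref{x1} to the data $(R',\{M_i\},\{F_i\})$ with the integer $f_ig_i$ playing the role of the ``$f_i$'' there.

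That application produces a Dedekind domain $E$ which is a finite separable integral extension of $R'$ with quotient field $L$ of degree $m := \prod_{i=1}^n f_ig_i$ over $F$, such that for each $i$ there are exactly $f_ig_i$ maximal ideals $N_{i,1},\ldots,N_{i,f_ig_i}$ of $E$ lying over $M_i$, each unramified, with $[(E/N_{i,j}):F_i]=m$. Because every ramification index equals one, $M_iE = \prod_{j=1}^{f_ig_i} N_{i,j}$, so the hypothesis $IR' = \prod_i M_i^{e_i}$ gives
\[
IE \;=\; \prod_{i=1}^{n}\prod_{j=1}^{f_ig_i} N_{i,j}^{\,e_i},
\]
which is a product of distinct prime ideals in $E$; in particular it is of the form $(N_1^{k_1}\cdots N_s^{k_s})^h$ with $h=1$ required by Proposition~\ref{prin.reduction.lemma}.

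I would then invoke Proposition~\ref{prin.reduction.lemma} to obtain a finite integral extension domain $A$ of $R$ with quotient field $L$ and distinct maximal ideals $P_{i,j}$ of $A$ satisfying $P_{i,j}E = N_{i,j}$, $E/N_{i,j}\cong A/P_{i,j}$, and the corresponding factorization of $(IA)_a$. Since $E$ is integrally closed in $L$ and integral over $R$ (hence over $A$), $E$ coincides with the integral closure $A'$ of $A$ in $A_{(0)}=L$. Consequently $[A_{(0)}:R_{(0)}] = [L:F] = \prod_i f_ig_i$; for each $P = P_{i,j}\in \mathbf M_I(A)$ one has $[(A/P):F_i] = [(E/N_{i,j}):F_i] = m$; the ideals $P_{i,j}$ ($1\le j\le f_ig_i$) are exactly the maximal ideals of $A$ with $P_{i,j}A'\cap R' = N_{i,j}\cap R' = M_i$; and $PA' = N_{i,j}$ is maximal in $A'$ with $A/P\cong A'/(PA')$. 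Separability of $A_{(0)}=L$ over $F$ is inherited from Theorem~\ref{x1}.

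The main step requiring care is verifying that the residue-field data supplied by Theorem~\ref{x1} survive the descent through Proposition~\ref{prin.reduction.lemma}: one must check that the isomorphism $A/P_{i,j}\cong E/N_{i,j}$ transfers the $F_i$-degree information correctly, and justify the identification $A'=E$ in the present altitude-one setting (which uses that $E$ is Dedekind, hence integrally closed, and integral over $R$). Once these bookkeeping issues are handled, the rest is the natural finite-residue-field-degree analog of Corollary~\ref{maincoro}.
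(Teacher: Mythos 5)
Your proposal is correct and follows essentially the same route as the paper: apply Theorem~\ref{x1} to the Dedekind domain $R'$ with the composite degrees $f_ig_i$ (using $F_i \subseteq R/(M_i\cap R) \subseteq R'/M_i$), then descend to $A$ via Proposition~\ref{prin.reduction.lemma}. The extra bookkeeping you spell out (identifying $A'$ with $E$ and transferring the residue-degree data through the isomorphisms $A/P_{i,j}\cong E/N_{i,j}$) is exactly what the paper leaves implicit in its closing sentence.
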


\begin{proof}
Since  $R'$  is a Dedekind domain and
$[(R'/M_i):F_i]$ $=$ $f_ig_i$  for
$i$ $=$ $1,\ldots,n$, it follows
from Theorem~\ref{x1} that
there exists a
Dedekind domain  ${E}$  that is a finite
separable integral extension domain  of  $R'$
such that  $[A_{(0)}:R_{(0)}]$ $=$ $\Pi_{i=1}^n f_i g_i$
and, for  $i$ $=$ $1,\ldots,n$, there exist
exactly  $f_ig_i$  maximal ideals  $N_{i,1},\ldots,N_{i,f_ig_i}$
in  $E$  that lie over  $M_i$  and,
for  $j$ $=$ $1,\ldots,f_ig_i$,
$M_iE_{N_{i,j}}$ $=$ $N_{i,j}E_{N_{i,j}}$  and
$[(E/N_{i,j}) : (R'/M_i)]$ $=$ $\frac{f_1g_1 \cdots f_ng _n}{f_ig_i}$,
so  $[(E/N_{i,j}) : F_i]$ $=$ $\Pi_{i=1}^n f_ig_i$.
The conclusions follow from this, together
with Proposition~\ref{prin.reduction.lemma}.
\end{proof}

Part of Theorem~\ref{x1} shows that if each residue field  $D/M_i$
is  finite and  $F_i$  is a
subfield of  $D/M_i$  such that  $[(D/M_i):F_i]$ $=$ $f_i$,
then there exists a finite separable integral
extension domain  $E$ of  $D$ such that  $[E_{(0)}:D_{(0)}]$ $=$
$[(E/N_{i,j}):F_i]$ $=$ $f_1 \cdots f_n$  for all  $i,j$ ($=$ $m$, say).
Proposition~\ref{anotherversion2x} characterizes the conditions a
realizable $m$-consistent system $T'$ for  $\mathbf M_I(D)$ must
satisfy in order that $[(E/N_{i,j}):F_i]$ $=$ $f_1 \cdots f_n$  for
all  $i,j$.

\begin{prop}
\label{anotherversion2x}
Let $D$ be a Dedekind domain with
quotient field $F \neq D$,
let $M_1,\ldots,M_n$ ($n$ $>$ $1$)  be distinct maximal
ideals of $D$, and assume that  $K_i$ $=$ $D/M_i$  is
finite for  $i$ $=$ $1,\ldots,n$.
For  $i$ $=$ $1,\ldots,n$  let $f_i$  be a positive integer
such that $[K_i : F_i]$ = $f_i$  for some subfield  $F_i$  of  $K_i$.
Let $m$  be a positive integer and let
$T'$ $=$ $\{T'(M_1),\ldots,T'(M_n)\}$ be a realizable $m$-consistent
system for $\mathbf M_I(D)$, where, for  $i$ $=$ $1,\ldots,n$,
$T'(M_i)$ = $\{ (K_{i,j}, f_{i,j},e_{i,j}) \mid j = 1, \ldots, s_i
\}$, and let $E$ be the integral closure of $D$ in a realization $L$
of  $T'$ for $\mathbf M_I(D)$, so  $[L:F]$ $=$ $m$.
Then the following hold:

\noindent
{\bf{(\ref{anotherversion2x}.1)}}
There exists a positive integer  $t$  such that
$[(E/N_{i,j}):F_i]$ $=$ $t$  for all  $i,j$
if and only if  the products $f_if_{i,j}$ are equal for all
$i,j$, and then  $t$ $=$ $f_if_{i,j}$.

\noindent
{\bf{(\ref{anotherversion2x}.2)}}
If  $[(E/N_{i,j}):F_i]$ $=$ $m$  for all  $i,j$
(as in Theorem \ref{x1}),
then  $\sum_{j=1}^{s_i} e_{i,j}$
$=$ $f_i$ for $i$ $=$ $1,\ldots,n$.

\noindent
{\bf{(\ref{anotherversion2x}.3)}}
If  $[(E/N_{i,j}):F_i]$ $=$ $t$  for all  $i,j$,
as in (\ref{anotherversion2x}.1),
and if no prime integer divides each
$f_i$, then $m$  is a positive multiple of  $t$
and  $t$  (and hence  $m$)  is a positive multiple
of each  $f_i$.
\end{prop}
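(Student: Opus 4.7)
The plan is to mirror the proof of Proposition~\ref{anotherversion2}, swapping the roles of ramification index and residue degree. The fundamental identity comes from Remark~\ref{Dedekind}.1: since $L$ realizes $T'$, the residue field $E/N_{i,j}$ is $K_i$-isomorphic to $K_{i,j}$ with $[K_{i,j}:K_i]=f_{i,j}$. Multiplicativity of field degrees therefore gives
\[
[(E/N_{i,j}):F_i] \;=\; [K_{i,j}:K_i]\cdot[K_i:F_i] \;=\; f_{i,j}\,f_i.
\]
This translates the condition ``$[(E/N_{i,j}):F_i]$ takes a common value $t$'' into ``the products $f_i f_{i,j}$ are independent of $i,j$,'' proving (\ref{anotherversion2x}.1) with $t=f_i f_{i,j}$.

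For (\ref{anotherversion2x}.2), I would invoke the consistent-system constraint $m=\sum_{j=1}^{s_i} e_{i,j}f_{i,j}$ from Definition~\ref{consist}(2). When $t=m$, part~(\ref{anotherversion2x}.1) forces $f_{i,j}=m/f_i$ for every $j$; substituting yields $m=(m/f_i)\sum_{j=1}^{s_i} e_{i,j}$, and cancelling $m/f_i$ gives $\sum_{j=1}^{s_i} e_{i,j}=f_i$.

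For (\ref{anotherversion2x}.3), the same substitution with general $t$ gives $f_{i,j}=t/f_i$, so the consistency equation becomes
\[
m f_i \;=\; t\sum_{j=1}^{s_i} e_{i,j}
\]
for each $i$. Hence $t\mid mf_i$ for every $i$. Since by hypothesis no prime integer divides all of $f_1,\ldots,f_n$, we have $\gcd(f_1,\ldots,f_n)=1$, and therefore $t\mid m$. The identity $t=f_i f_{i,j}$ shows moreover that $f_i\mid t$ for each $i$, so both $t$ and $m$ are positive multiples of every $f_i$.

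The only mildly delicate step is the final divisibility argument in part~(\ref{anotherversion2x}.3): passing from $t\mid m f_i$ for all $i$ to $t\mid m$ relies on the standard fact that $\gcd(mf_1,\ldots,mf_n)=m\gcd(f_1,\ldots,f_n)$ in $\mathbb Z$, applied when the inner gcd is $1$. Everything else reduces to the single degree identity $[(E/N_{i,j}):F_i]=f_i f_{i,j}$ and a substitution into the defining equation of an $m$-consistent system.
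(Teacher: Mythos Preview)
Your proof is correct and follows essentially the same route as the paper's: both arguments use the degree identity $[(E/N_{i,j}):F_i]=f_if_{i,j}$ from the realization data, then substitute $f_{i,j}=t/f_i$ into the consistency equation $m=\sum_j e_{i,j}f_{i,j}$ to obtain $mf_i=t\sum_j e_{i,j}$, and finally use $\gcd(f_1,\ldots,f_n)=1$ to pass from $t\mid mf_i$ to $t\mid m$. Your explicit justification of that last divisibility step via $\gcd(mf_1,\ldots,mf_n)=m\gcd(f_1,\ldots,f_n)$ is a slight elaboration of what the paper leaves implicit, but the approach is the same.
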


\begin{proof}
For (\ref{anotherversion2x}.1),
by hypothesis  $[(E/N_{i,j}):K_i]$ $=$ $f_{i,j}$  and
$[K_i:F_i]$ $=$ $f_i$  for all  $i,j$, so it follows that
$[(E/N_{i,j}):F_i]$ $=$ $t$  for all  $i,j$  if and
only if  $f_if_{i,j}$ $=$ $t$  for all  $i,j$,
hence (\ref{anotherversion2x}.1) holds.

For (\ref{anotherversion2x}.2),
by (2) in the definition of a consistent system
we have $m$ $=$ $\sum_{j=1}^{s_i} e_{i,j}f_{i,j}$
for  $i$ $=$ $1,\ldots,n$.  Therefore if
(\ref{anotherversion2x}.1) holds and if
$t$ $=$ $m$ ($=$ $[L:F]$), then  $m$
$=$ $t$ $=$ $f_if_{i,j}$  for all  $i,j$,
so  $f_{i,j}$ = $\frac{m}{f_i}$ for
$i$ $=$ $1,\ldots,n$
and  $j$ $=$ $1,\ldots,s_i$.
Substituting
$\frac{m}{f_i}$ for $f_{i,j}$
and multiplying by $f_i$
we get $m f_i$
$=$ $m \sum_{j=1}^{s_i} e_{i,j}$
for each $i$, so the conclusion follows
by cancelling  $m$.

For (\ref{anotherversion2x}.3),
if (\ref{anotherversion2x}.1) holds,
then as in the proof of
(\ref{anotherversion2x}.2) we have
$f_{i,j}$ $=$ $\frac{t}{f_i}$  for
all  $i,j$
and  $m$ $=$ $\sum_{j=1}^{s_i} e_{i,j}f_{i,j}$
for all  $i$.  Substituting
$\frac{t}{f_i}$ for $f_{i,j}$
and multiplying by $f_i$
we get $m f_i$
$=$ $t \sum_{j=1}^{s_i} e_{i,j}$
for each $i$.  Since no prime integer divides each  $f_i$,
we get $m$ = $tm'$
for some $m' \in \mathbb{N_+}$.
Therefore, since  $t$ $=$ $f_if_{i,j}$
for all  $i,j$, $t$  and  $m$  are positive
multiples of each  $f_i$.
\end{proof}

\section{Finite residue fields and radical-power ideals.}
Theorem~\ref{alsoinfinite} is the main result in this section; it
combines the main theorems of the preceding two sections.

\begin{theo}
\label{alsoinfinite} With the notation of
(\ref{appmax}) and (\ref{Dedekind}) (so
$I$ $=$ ${M_1}^{e_1} \cdots {M_n}^{e_n}$, where
$n$ $>$ $1$  and the  $e_i$  are
positive integers), assume that
each $K_i$ $=$ $D/M_i$  is finite and let  $\overline{K_i}$
be a fixed algebraic closure of  $K_i$.
For  $i$ $=$ $1,\ldots,n$ let $f_i$  be a
positive integer such that  $K_i$  is an extension field
of a subfield  $F_i$  with $[K_i:F_i]$
$=$ $f_i$, and let  ${K_i}^*$
be the unique extension field of
$K_i$ of degree  $e_1 \cdots e_nf_1 \cdots f_n$ that is
contained in $\overline{K_i}$.
Then
the system $U$ $=$ $\{U(M_1),\ldots,U(M_n)\}$ is a
realizable $e_1 \cdots e_n f_1 \cdots f_n$-consistent system for
$\mathbf M_I(D)$, where, for  $i$ $=$ $1,\ldots,n$, $U(M_i)$ $=$
$\{(K_{i,j},\frac{f_1 \cdots f_n}{f_i}, \frac{e_1 \cdots e_n}{e_i}
\mid j = 1,\ldots,e_if_i\}$  (with  $K_{i,j}$ $=$ ${K_i}^*$  for
$j$ $=$ $1,\ldots,e_if_i$). Therefore there exists a separable
algebraic extension field  $L$  of degree  $e_1 \cdots e_n f_1 \cdots f_n$
over the quotient field  $F$ of  $D$, and a finite integral
extension Dedekind domain  $E$  of  $D$  with quotient field  $L$
such that, for  $i$ $=$ $1,\ldots, n$, there are exactly  $e_if_i$
maximal ideals $N_{i,1},\ldots,N_{i,e_if_i}$  in  $E$ that lie over
$M_i$, and it holds that
$[(E/N_{i,j}) : F_i]$ $=$ $f_1 \cdots f_n$  for all $i$  and
$j$,  and  $IE$ $=$ $(\Rad(IE))^{e_1 \cdots e_n}$ $=$
$(N_{1,1} \cdots N_{e_nf_n})^{e_1 \cdots e_n}$.
\end{theo}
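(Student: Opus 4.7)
The plan is to prove realizability by composing realizable consistent systems, combining the techniques of Theorems \ref{induct} and \ref{x1}. First I would apply Theorem \ref{induct} to $D$ and the ideal $I$, obtaining a Dedekind domain $E_0$, finite separable integral over $D$, with quotient field $L_0$ satisfying $[L_0:F] = e_1 \cdots e_n$, such that for each $i$ there are exactly $e_i$ maximal ideals $Q_{i,1}, \ldots, Q_{i,e_i}$ of $E_0$ over $M_i$ with $E_0/Q_{i,j} \cong K_i$ and $M_i(E_0)_{Q_{i,j}} = Q_{i,j}^{e_1 \cdots e_n/e_i}$. In particular $IE_0 = (\Rad(IE_0))^{e_1 \cdots e_n}$; because the further extensions will be arranged to be unramified, this already supplies the ramification indices for the final $E$.

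Next, on $\mathbf M_I(E_0) = \{Q_{i,j}\}$ I would construct a realizable $(f_1 \cdots f_n)$-consistent system $V$ that introduces the desired residue-field extensions without further ramification:
\begin{equation*}
V(Q_{i,j}) = \{(K_i^*, f_1 \cdots f_n/f_i, 1) \mid l = 1, \ldots, f_i\},
\end{equation*}
where $K_i^* \subseteq \overline{K_i}$ is the unique extension of $K_i$ of degree $f_1 \cdots f_n/f_i$. A direct application of Theorem \ref{x1} to $E_0$ would yield the wrong total degree $\prod_i f_i^{e_i}$, because $K_i$ is the residue field of $e_i$ different primes of $E_0$, so $V$ must be realized by a bespoke composition modeled on the proof of Theorem \ref{x1}. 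Concretely, first realize $V_1$ on $E_0$ with $V_1(Q_{n,j}) = \{(K_n^*, f_1 \cdots f_{n-1}, 1)\}$ of size one and, for $i < n$, $V_1(Q_{i,j}) = \{(H_i, f_1 \cdots f_{n-1}/f_i, 1) \mid l = 1, \ldots, f_i\}$, where $H_i$ is the unique field with $K_i \subseteq H_i \subseteq K_i^*$ and $[H_i:K_i] = f_1 \cdots f_{n-1}/f_i$ (available by Remark \ref{preques}.1); realizability follows from Theorem \ref{GK}(i). Then on the resulting $E_1$, realize $V_2$ which assigns $\{(K_i^*, f_n, 1)\}$ (size one) to each prime of $E_1$ lying over a $Q_{i,j}$ with $i < n$, and $\{(K_n^*, 1, 1) \mid l = 1, \ldots, f_n\}$ to the unique prime of $E_1$ lying over each $Q_{n,j}$; this is $f_n$-consistent and again realizable by Theorem \ref{GK}(i).

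Composing these three steps yields the desired $E$, and tallying contributions shows that over each $M_i$ the final $E$ has $e_i f_i$ maximal ideals, each with ramification $e_1 \cdots e_n/e_i$ and residue field $K_i$-isomorphic to $K_i^*$, so $[E/N_{i,j}:F_i] = f_1 \cdots f_n$ and $IE = (\Rad(IE))^{e_1 \cdots e_n}$ follow. The main obstacle is the bookkeeping through the three stages, in particular verifying that the two-step residue tower $K_i \subseteq H_i \subseteq K_i^*$ built on the $i < n$ side of the composition coincides, up to $K_i$-isomorphism, with the one-step extension used on the $i = n$ side; this rests on the uniqueness of intermediate extensions of finite fields guaranteed by Remark \ref{preques}.1.
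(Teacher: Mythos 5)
Your proposal is correct, and its overall architecture is the same as the paper's: first apply Theorem~\ref{induct} to get a Dedekind domain $E_0$ (the paper's $E_1$) carrying all of the ramification, with $e_i$ primes over $M_i$, residue fields $\cong K_i$, and $IE_0=(\Rad(IE_0))^{e_1\cdots e_n}$; then make a further unramified extension of degree $f_1\cdots f_n$ that splits each prime over $M_i$ into $f_i$ primes with residue degree $\frac{f_1\cdots f_n}{f_i}$ over $K_i$, and conclude by composing. The difference lies in how the second stage is justified. The paper simply declares its $f_1\cdots f_n$-consistent system $T^*$ on $\mathbf M_I(E_0)$ to be realizable ``by Theorem~\ref{x1},'' but, as you observe, Theorem~\ref{x1} applied verbatim to the $e_1+\cdots+e_n$ primes of $\mathbf M_I(E_0)$ would produce a system of total degree $\prod_i f_i^{e_i}$, not $f_1\cdots f_n$, since $K_i$ occurs as the residue field of $e_i$ distinct primes; so the paper's citation is loose, and what is really being used is the method of proof of Theorem~\ref{x1}. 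Your bespoke two-step realization (the system $V_1$ with singleton data at the primes over $M_n$, then $V_2$ with singleton data at the primes over the $M_i$, $i<n$, each realizable by Theorem~\ref{GK}(i), with the residue towers $K_i\subseteq H_i\subseteq K_i^*$ matched up via Remark~\ref{preques}.1) supplies exactly the argument the paper elides, and your degree, splitting, ramification and residue-field bookkeeping all check out. Incidentally, you have also implicitly corrected the statement's description of $K_{i,j}$: consistency with $f_{i,j}=\frac{f_1\cdots f_n}{f_i}$ and with $[(E/N_{i,j}):F_i]=f_1\cdots f_n$ requires $K_{i,j}$ to be the extension of $K_i$ of degree $\frac{f_1\cdots f_n}{f_i}$, as in your definition of $K_i^*$, rather than of degree $e_1\cdots e_nf_1\cdots f_n$. (The paper's Remark~\ref{commutative} notes the two stages could also be performed in the opposite order; you follow the same order as the paper's proof.)
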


\begin{proof}
Let  $S^*$ $=$ $\{S^*(M_1),\ldots,S^*(M_n)\}$, where  $S^*(M_i)$ $=$
$\{(G_{i,j},1,\frac{e_1 \cdots e_n}{e_i}) \mid j = 1,\ldots e_i\}$  for
$i$ $=$ $1,\ldots n$  (with  $G_{i,j}$ $=$ $K_i$  for all  $i,j$).
Then  $S^*$  is a realizable  $e_1 \cdots e_n$-consistemt
system for  $\mathbf M_I(D)$, by Theorem \ref{induct}.  Let  $L_1$
be a realization of  $S^*$  for  $\mathbf M_I(D)$  (so  $L_1$
is a separable algebraic extension field of  $F$  of degree
$e_1 \cdots e_n$), and let  $E_1$  be the integral closure of
$D$  in  $L_1$.  Thus by Theorem \ref{induct}, for  $i$ $=$ $1, \ldots,n$  there
exist exactly  $e_i$  maximal ideals  $Q_{i,1},\ldots,Q_{i,e_i}$
in  $E_1$  that lie over  $M_i$, $IE_1$ $=$
$(Q_{1,1} \cdots Q_{n,e_n})^{e_1 \cdots e_n}$, and
$E_1/Q_{i,j}$  is  $K_i$-isomorphic to  $K_i$.

Let  $T^*$ $=$ $\{T^*(Q_{1,1}),\ldots,T^*(Q_{n,e_n})\}$, where  $T^*(Q_{i,j})$ $=$
$\{(H_{i,j,k},\frac{f_1 \cdots f_n}{f_i},1) \mid k = 1,\ldots f_i\}$  for
all  $i,j$ (where  $H_{i,j,k}$  is one of the  $e_if_i$  ideals
$K_{i,j}$  in the set  $U(M_i)$. Then  $T^*$  is a
realizable  ${f_1} \cdots {f_n}$-consistemt
system for  $\mathbf M_I(E_1)$, by Theorem \ref{x1}.  Let  $L$
be a realization of  $T^*$  for  $\mathbf M_I(E_1)$  (so  $L$
is a separable algebraic extension field of  $L_1$  of degree
$f_1 \cdots f_n$, so  $L$  is a separable algebraic extension
field of  $F$  of degree $e_1 \cdots e_nf_1 \cdots f_n$),
and let  $E$  be the integral closure of $E_1$  in  $L$.  Thus
by Theorem \ref{x1}, for  $i$ $=$ $1, \ldots,n$  and  $j$ $=$
$1,\ldots e_n$  there exist exactly  $f_i$  maximal ideals
$N_{i,j,1},\ldots,N_{i,j,f_i}$
in  $E$  that lie over  $Q_{i,j}$ (so be resubscripting
there are exactly  $e_if_i$  maximal ideals
$N_{i,1},\ldots,N_{i,e_if_i}$ in  $E$  that lie over  $M_i$),
$IE_1$ $=$ $(N_{1,1} \cdots N_{n,e_nf_n})^{e_1 \cdots e_n}*$, and
it is readily checked that  $E/N_{i,j}$  is  $K_i$-isomorphic
to  $K_{i,j}$.  Therefore  $E$  is a Dedekind domain that has
exactly  $e_if_i$ maximal
ideals that lie over $M_i$  (for  $i$ $=$ $1,\ldots,n$) and that
have the ramification and residue field extension properties that are
specified by $U$ (with  $U$  as in the statement of this theorem).
Therefore $L$  is a realization of $U$
for  $\mathbf M_I(D)$, so  $U$ is a realizable  $e_1 \cdots e_nf_1
\cdots f_n$-consistent system for  $\mathbf M_I(D)$  and  $E$
is the integral closure of  $D$  in  $L$  and has the
properties prescribed by  $U$.
\end{proof}

\begin{rema}
\label{commutative}
{\em
It is readily seen that, alternately,
Theorem~\ref{alsoinfinite}
could be proved by first applying
Theorem~\ref{x1}  to  $D$  to obtain
a finite integral extension Dedekind domain  $E_2$
of  $D$  with the desired residue field extension properties
and no ramification of any  $M_1,\ldots,M_n$,
and then apply Theorem~\ref{induct} to  $E_2$
to yield the desired Dedekind domain, say  $E'$.
}
\end{rema}

Proposition~\ref{allone2}  is  related to
Theorem~\ref{alsoinfinite}, but does not follow immediately from it.
It does not require the residue fields  $D/M_i$  to be finite,
but it does require they have a finite extension of a specific
degree.

\begin{prop}
\label{allone2}
With the notation of
(\ref{appmax}) and (\ref{Dedekind}), assume
that  $n$ $>$ $1$  and
that  $D/M_i$  has a simple  algebraic extension field
$H_i$  of degree  $e_1 \cdots e_n$
for  $i$ $=$ $1,\ldots,n$.
Then the $(e_1 \cdots e_n)^2$-consistent
system $S^*$ $=$ $\{S^*(M_1),\ldots,S^*(M_n)\}$
for  $\mathbf M_I(D)$ is realizable
for  $\mathbf M_I(D)$, where
$S^*(M_i)$ $=$ $\{(K_{i,j},e_1 \cdots e_n,\frac{e_1 \cdots e_n}{e_i})
\mid j = 1,\ldots,e_i\}$ for  $i$ $=$ $1,\ldots,n$
(where  $K_{i,j}$  is  $(D/M_i)$-isomorphic to  $H_i$).
Therefore there exists a separable algebraic extension
field  $L$  of the quotient field
 $F$  of  $D$  of degree  $(e_1 \cdots e_n)^2$
and a finite separable integral extension Dedekind
domain  $E$  of  $D$  with
quotient field  $L$  such that,
for  $i$ $=$ $1,\ldots, n$, there are
exactly  $e_i$  maximal ideals
$N_{i,1},\ldots,N_{i,e_i}$  in  $E$
that lie over  $M_i$,
$[(E/N_{i,j}) : (D/M_i)]$ $=$ $e_1 \cdots e_n$  for all
$i$  and  $j$,  and  $IE$ $=$ $(\Rad(IE))^{e_1 \cdots e_n}$.
\end{prop}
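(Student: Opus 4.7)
The plan is to compose two realizable consistent systems, following the template used for Theorem~\ref{induct} and Theorem~\ref{x1}: first I would realize the desired ramification (with trivial residue extensions), and then realize the desired residue extensions (with no further ramification). Note that Theorem~\ref{alsoinfinite} cannot be applied directly because the $D/M_i$ need not be finite; however, the hypothesis that each $D/M_i$ has a simple algebraic extension of degree $e_1 \cdots e_n$ is exactly what is needed to make Krull's criterion applicable at the second stage.

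First I would apply Theorem~\ref{induct} to the ideal $I = {M_1}^{e_1} \cdots {M_n}^{e_n}$ to obtain a finite separable integral extension Dedekind domain $E_1$ of $D$ with quotient field $L_1$ of degree $e_1 \cdots e_n$ over $F$, such that for each $i$ there are exactly $e_i$ maximal ideals $Q_{i,1},\ldots,Q_{i,e_i}$ in $E_1$ lying over $M_i$, and for each $(i,j)$ one has $E_1/Q_{i,j} \cong D/M_i$ and $M_i(E_1)_{Q_{i,j}} = {Q_{i,j}}^{(e_1 \cdots e_n)/e_i}(E_1)_{Q_{i,j}}$.

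Next I would set up a second consistent system $T$ for $\mathbf M_I(E_1)$. For each maximal ideal $Q_{i,j}$, let $T(Q_{i,j}) = \{(H_{i,j}, e_1 \cdots e_n, 1)\}$, where $H_{i,j}$ is a copy of $H_i$, regarded as a simple algebraic extension of $E_1/Q_{i,j} \cong D/M_i$ of degree $e_1 \cdots e_n$. Each $T(Q_{i,j})$ trivially satisfies the consistent system equation $\sum_k e_k f_k = (e_1 \cdots e_n) \cdot 1 = e_1 \cdots e_n$, so $T$ is an $(e_1 \cdots e_n)$-consistent system for $\mathbf M_I(E_1)$. Since every $T(Q_{i,j})$ has cardinality one, Theorem~\ref{GK}(i) gives that $T$ is realizable. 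Let $L$ realize $T$ and let $E$ be the integral closure of $E_1$ in $L$. Then $[L : L_1] = e_1 \cdots e_n$, so $[L : F] = (e_1 \cdots e_n)^2$, and $E$ is a finite separable integral extension Dedekind domain of $D$.

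For each $(i,j)$ there is a unique maximal ideal $N_{i,j}$ of $E$ over $Q_{i,j}$, with $E/N_{i,j}$ being $(E_1/Q_{i,j})$-isomorphic to $H_{i,j}$ and with $Q_{i,j}E_{N_{i,j}} = N_{i,j}E_{N_{i,j}}$. Combining with the first step gives $[(E/N_{i,j}) : (D/M_i)] = e_1 \cdots e_n$ and $M_iE_{N_{i,j}} = {N_{i,j}}^{(e_1 \cdots e_n)/e_i}E_{N_{i,j}}$, exactly the ramification and residue data prescribed by $S^*(M_i)$. Hence $L$ realizes $S^*$ for $\mathbf M_I(D)$. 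Finally,
\[
IE \;=\; \prod_{i=1}^n {M_i}^{e_i}E \;=\; \prod_{i=1}^n \prod_{j=1}^{e_i} {N_{i,j}}^{e_i \cdot (e_1 \cdots e_n)/e_i} \;=\; \prod_{i,j} {N_{i,j}}^{e_1 \cdots e_n} \;=\; (\Rad(IE))^{e_1 \cdots e_n},
\]
which is the claimed radical-power factorization. The only delicate point is the second stage: one must use Theorem~\ref{GK}(i) at each of the $e_1 + \cdots + e_n$ valuation rings of $\mathbf M_I(E_1)$ simultaneously (made possible by the hypothesized simple extensions $H_i$), since no hypothesis on $F$ beyond the existence of the $H_i$ is assumed.
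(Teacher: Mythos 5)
Your proof is correct and uses essentially the same technique as the paper — composing the realizable system of Theorem~\ref{induct} (which supplies the ramification data with trivial residue extensions) with singleton systems realizable by Theorem~\ref{GK}(i) (which supply the unramified degree-$e_1 \cdots e_n$ residue extensions $H_i$) — only with the two stages performed in the opposite order: the paper first makes the unramified residue extension at the $M_i$ and then applies Theorem~\ref{induct} to $\mathbf M_I(E_1)$, whereas you ramify first and then extend the residue fields at the $e_1 + \cdots + e_n$ ideals of $\mathbf M_I(E_1)$. This interchange is harmless, and indeed the paper itself observes (Remark~\ref{commutative}) that the analogous composition for Theorem~\ref{alsoinfinite} can be carried out in either order.
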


\begin{proof}
Let  $T$ $=$ $\{T(M_1),\ldots T(M_n)\}$,
where  $T(M_i)$ $=$ $\{(H_i,e_1 \cdots
e_n,1)\}$  for $i$ $=$ $1,\ldots,n$.  Then  $T$  is
realizable $e_1 \cdots e_n$-consistent system for  $\mathbf M_I(D)$,
by Theorem~\ref{GK}(i), so the integral closure $E_1$  of  $D$
in a realization $L_1$  of  $T$  for  $\mathbf M_I(D)$
has a unique maximal ideal $N_i$
that lies over  $M_i$  for  $i$ $=$ $1,\ldots,n$) and
then  $E_1/N_i$  is  $D/M_i$-isormophic to  $H_i$  and
$M_iE_1$ $=$ $N_i$.
Let $S$ $=$ $\{S(N_1),\ldots, S(N_n)\}$,
where $S(N_i)$ $=$ $\{(K_{i,j},1,\frac{e_1 \cdots e_n}{e_i}) \mid j =
1,\ldots,e_i\}$  for $i$ $=$ $1,\ldots,n$
(so the $K_{i,j}$ are  $(A/M_i)$-isomorphic to   $H_i$  for  $j$ $=$ $1,\ldots,e_i$).
Then  $S$  is a realizable $e_1 \cdots e_n$-consistent system
for $\mathbf M_I(E_1)$, by Theorem~\ref{induct}  applied to  $\mathbf M_I(E_1)$.
Therefore it is readily checked that
the integral closure  $E$  of  $E_1$  in a realization $L$ of  $S$
for  $\mathbf M_I(E_1)$  has
the properties prescribed by  $S^*$ for  $\mathbf M_I(D)$
(with  $S^*$  as in the statement of this theorem). Therefore
$L$  is a realization of  $S^*$ for  $\mathbf M_I(D)$, so $S^*$  is
realizable for  $\mathbf M_I(D)$.
\end{proof}

If Proposition~\ref{allone2} is applied to  $D$ $=$ $\mathbb Z$ and
$I$ $=$ $72 \mathbb Z$, for example, then it follows that there
exists a field  $L$  of degree  $36$  over  $\mathbb Q$  such that
the integral closure $E$  of  $\mathbb Z$  in  $L$  has exactly
three maximal ideals $p_{1,1},p_{1,2},p_{1,3}$  that lie over  $2
\mathbb Z$  and exactly two maximal ideals $p_{2,1},p_{2,2}$  that
lie over $3 \mathbb Z$, $72 E$ $=$
$(p_{1,1}p_{1,2}p_{1,3}p_{2,1}p_{2,2})^6$, $[(E/p_{1,j}):(\mathbb
Z/2 \mathbb Z)]$ $=$ $6$  for $j$ $=$ $1,2,3$, and
$[(E/p_{2,j}):(\mathbb Z/3 \mathbb Z)]$ $=$ $6$  for $j$ $=$ $1,2$.

\begin{coro}
\label{xxyy2}
Let  $R$  be a Noetherian domain of altitude one, let
$I$  be a nonzero proper ideal in  $R$, let  $R'$  be
the integral closure of  $R$  in its quotient field,
let  $IR'$ $=$ ${M_1}^{e_1} \cdots {M_n}^{e_n}$  ($n$ $>$ $1$)
be a normal primary decomposition of  $IR'$.

\noindent {\bf{(\ref{xxyy2}.1)}} Assume that  $R'/M_i$
is finite for $i$ $=$ $1,\ldots,n$,
let $[(R'/M_i):(R/(M_i \cap R))]$ = $g_i$, and let $f_i$
be a positive integer such that $[(R/(M_i \cap R)) : F_i]$ =
$f_i$ for some subfield  $F_i$ of $R/(M_i \cap R)$.
Then there exists a finite separable integral extension
domain $A$  of  $R$  such that $[A_{(0)}:R_{(0)}]$ $=$ $\Pi_{i=1}^n
e_if_ig_i$ and, for $i$ $=$ $1,\ldots,n$, there exist exactly
$e_if_ig_i$  maximal ideals $P_{i,j}$ $\in$ $\mathbf M_I(A)$ such
that, for  $j$ $=$ $1,\ldots,e_if_ig_i$: $P_{i,j}A'$ $\in$ $\mathbf
M_I(A')$; $P_{i,j}A' \cap R'$ $=$ $M_i$; $[(A/P_{i,j}):F_i]$ $=$
$\Pi_{i=1}^n f_ig_i$; and, $(IA)_a$ $=$
$([\Pi_{i=1}^n(\Pi_{j=1}^{e_if_ig_i}P_{i,j})]^{e_1 \cdots e_n})_a$.

\noindent
{\bf{(\ref{xxyy2}.2)}}
Assume that,
for $i$ $=$ $1,\ldots,n$,
$R'/M_i$  has a simple  algebraic extension
field of degree  $e_1 \cdots e_n $.
Then there exists a finite separable integral extension
domain  $A$  of  $R$  such that
$[A_{(0)}:R_{(0)}]$ $=$ $(\Pi_{i=1}^n e_i)^2$ and, for
$i$ $=$ $1,\ldots,n$, there
exist exactly  $e_i$  maximal ideals
$P_{i,j}$
$\in$ $\mathbf M_I(A)$ such that,
for  $j$ $=$ $1,\ldots,e_i$:
$P_{i,j}A'$ $\in$ $\mathbf M_I(A')$;
$P_{i,j}A' \cap R'$
$=$ $M_i$; $[(A/P_{i,j}):(R'/M_i)]$ $=$ $\Pi_{i=1}^n e_i$;
and, $(IA)_a$ $=$
$([\Pi_{i=1}^n(\Pi_{j=1}^{e_i}P_{i,j})]^{e_1 \cdots e_n})_a$.
\end{coro}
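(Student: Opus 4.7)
The plan is to reduce each part to the main results of Sections~3--5 applied to the integral closure $R'$ of $R$ and then descend to a finite integral extension $A$ of $R$ via Proposition~\ref{prin.reduction.lemma}, exactly as in the proofs of Corollary~\ref{maincoro} and Corollary~\ref{xxyy}.

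For Part~(\ref{xxyy2}.1), I first observe that the chain $F_i \subseteq R/(M_i \cap R) \subseteq R'/M_i$ gives $[(R'/M_i):F_i] = f_i g_i$, and each $R'/M_i$ is finite by hypothesis. The Dedekind domain $R'$ together with the ideal $IR' = M_1^{e_1} \cdots M_n^{e_n}$ and the subfields $F_i \subseteq R'/M_i$ thus satisfies the hypotheses of Theorem~\ref{alsoinfinite} (with the rôle of the integers $f_i$ of that theorem played here by $f_i g_i$). Applying it yields a separable extension $L$ of $R_{(0)} = R'_{(0)}$ of degree $\prod_{i=1}^n e_i f_i g_i$ and a Dedekind domain $E$, namely the integral closure of $R'$ (equivalently of $R$) in $L$, such that for each $i$ there are exactly $e_i f_i g_i$ maximal ideals $N_{i,k}$ of $E$ lying over $M_i$, with $[(E/N_{i,k}):F_i] = \prod_{i=1}^n f_i g_i$ and $IE = (\Rad(IE))^{e_1 \cdots e_n} = \bigl(\prod_{i,k} N_{i,k}\bigr)^{e_1 \cdots e_n}$. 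Now Proposition~\ref{prin.reduction.lemma} applies with $k_{i,k} = 1$ and $h = e_1 \cdots e_n$ and produces a finite integral extension $A$ of $R$ with quotient field $L$ and distinct maximal ideals $P_{i,k}$ satisfying $P_{i,k} A' = N_{i,k}$, $A/P_{i,k} \cong E/N_{i,k}$, and $(IA)_a = \bigl((\prod_{i,k} P_{i,k})^{e_1 \cdots e_n}\bigr)_a$. Collecting these facts gives every conclusion of (\ref{xxyy2}.1).

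For Part~(\ref{xxyy2}.2) the argument is structurally identical, with Theorem~\ref{alsoinfinite} replaced by Proposition~\ref{allone2}: the hypothesis that each $R'/M_i$ has a simple algebraic extension of degree $e_1 \cdots e_n$ is precisely what Proposition~\ref{allone2} requires, and applying it to $R'$ and $IR'$ produces a Dedekind domain $E$ finite separable over $R'$ of degree $(e_1 \cdots e_n)^2$ with exactly $e_i$ maximal ideals $N_{i,k}$ over each $M_i$, residue-field extension degrees $[(E/N_{i,k}):(R'/M_i)] = e_1 \cdots e_n$, and $IE = (\Rad(IE))^{e_1 \cdots e_n}$. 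Applying Proposition~\ref{prin.reduction.lemma} to this $E$ in the same way as in Part~(1) yields the desired $A$ together with the stated numerical data.

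The only real subtlety is matching the numerics in Part~(1): I need to be sure that feeding $f_i g_i$ into Theorem~\ref{alsoinfinite} as the ``$f_i$'' parameter produces exactly $\prod_{i=1}^n e_i f_i g_i$ for $[A_{(0)}:R_{(0)}]$ and $\prod_{i=1}^n f_i g_i$ for each residue-field degree, which is immediate from the chain $F_i \subseteq R/(M_i \cap R) \subseteq R'/M_i$. Beyond this bookkeeping, no additional composition of consistent systems is needed, since the compositions have already been carried out inside the proofs of Theorem~\ref{alsoinfinite} and Proposition~\ref{allone2}.
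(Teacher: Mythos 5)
Your proposal is correct and follows essentially the same route as the paper: apply Theorem~\ref{alsoinfinite} (resp.\ Proposition~\ref{allone2}) to the Dedekind domain $R'$ with the parameter $f_i$ of that theorem taken to be $f_ig_i$ via the tower $F_i \subseteq R/(M_i \cap R) \subseteq R'/M_i$, and then descend to a finite integral extension $A$ of $R$ using Proposition~\ref{prin.reduction.lemma}. The bookkeeping you flag is exactly the point the paper handles the same way (as in Corollary~\ref{xxyy}), so nothing further is needed.
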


\begin{proof}
For (\ref{xxyy2}.1), since  $R'$  is a Dedekind domain, it follows
from Theorem~\ref{alsoinfinite} that there exists a finite separable
integral extension Dedekind domain  $E$  of  $R$  such that
$[E_{(0)}:R_{(0)}]$ $=$ $\Pi_{i=1}^n e_if_ig_i$ and, for $i$ $=$
$1,\ldots,n$, there exist exactly  $e_if_ig_i$  maximal ideals
$N_{i,j}$ $\in$ $\mathbf M_I(E)$ such that, for  $j$ $=$
$1,\ldots,e_if_ig_i$: $N_{i,j} \cap R'$ $=$ $M_i$;
$[(E/P_{i,j}):F_i]$ $=$ $\Pi_{i=1}^n f_ig_i$; and, $IE$ $=$
$[\Pi_{i=1}^n(\Pi_{j=1}^{e_if_ig_i}N_{i,j})]^{e_1 \cdots e_n}$.
Therefore the conclusions follow from this, together with
Proposition~\ref{prin.reduction.lemma}.

The proof of (\ref{xxyy2}.2) is similar, but use
Proposition~\ref{allone2} in place of Theorem~\ref{alsoinfinite}.
\end{proof}

The final result in this section follows immediately from
combining Propositions \ref{anotherversion2} and \ref{anotherversion2x}.

\begin{rema}
\label{END}
{\em
With the notation of
(\ref{appmax}) and (\ref{Dedekind})
(so  $I$ $=$ ${M_1}^{e_1} \cdots {M_n}^{e_n}$) assume
that each  $K_i$ $=$ $D/M_i$  is finite with a subfield
$F_i$  such that $[K_i:F_i]$ $=$ $f_i$.  Let
$S$ $=$ $\{S(M_1),\ldots,S(M_n)\}$  with  $S(M_i)$
$=$ $\{(K_{i,j},f_{i,j},e_{i,j}) \mid j = 1,\ldots,s_i\}$
for  $i$ $=$ $1,\ldots,n$
be a realizable  $m$-consistent system for  $\mathbf M_I(D)$
and let  $E$   be the integral closure of  $D$  in
a realization $L$  of  $S$  for  $\mathbf M_I(D)$.
Then there exist positive integers  $t_1$  and  $t_2$
such that  $IE$ $=$ $(\Rad(IE))^{t_1}$  and such that,
for  $i$ $=$ $1,\ldots,n$,
$[(E/N):F_i]$ $=$ $t_2$  for all maximal ideals  $N$
in  $E$  that lie over  $M_i$
if and only if  $t_1$ $=$ $e_ie_{i,j}$
and  $t_2$ $=$ $f_if_{i,j}$  for all  $i,j$,
and then  $e_i$ $=$ $\sum_{i=1}^{s_i} f_{i,j}$  and
$f_i$ $=$ $\sum_{i=1}^{s_i} e_{i,j}$.
}
\end{rema}

\bigskip

\begin{flushleft}

Department of Mathematics, Purdue University, West Lafayette,
Indiana 47907-1395 {\em E-mail address: heinzer@math.purdue.edu}

\vspace{.15in}

Department of Mathematics, University of California, Riverside,
California 92521-0135
{\em E-mail address: ratliff@math.ucr.edu}

\vspace{.15in}

Department of Mathematics, University of California, Riverside,
California 92521-0135
{\em E-mail address: rush@math.ucr.edu}

\end{flushleft}

\end{document}